\newcommand{\XX}{\mathbb{X}}
\newcommand{\p}{\mathbb{P}}
\newcommand{\E}{\mathbb{E}}
\renewcommand{\H}{{\cal P}}
\newcommand{\cB}{\mathcal{B}}
\newcommand{\eps}{\varepsilon}
\numberwithin{equation}{section}
\newtheorem{theorem}{Theorem}\numberwithin{theorem}{section}
\newtheorem{prop}[theorem]{Proposition}
\newtheorem{lemma}[theorem]{Lemma}
\newtheorem{rema}[theorem]{Remark}
\def\N{\mathbb{N}}
\def\E{\mathbb{E}}
\def\H{\mathbb{H}}
\def\VV{\mathbb{V}}
\def\0{{\bf 0}}
\def\R{\mathbb{R}}
\def\PP{\mathbb{P}}
\newcommand{\HB}{\mathrm{HB}}
\renewcommand{\E}{\mathbb E \,}
\newcommand{\Var}{{\rm Var}}
\newcommand{\x}{{\bf x}}
\def\beqn{\begin{equation}}
	\def\eeqn{\end{equation}}
\def\be{\begin{equation}}
	\def\ee{\end{equation}}
\def\R{\mathbb{R}}
\def\dint{\textup{d}}
\def\cH{{\cal H}}
\def\cL{{\cal L}}
\def\qed{\hfill\hbox{${\vcenter{\vbox{
					\hrule height 0.4pt\hbox{\vrule width 0.4pt height 6pt
						\kern5pt\vrule width 0.4pt}\hrule height 0.4pt}}}$}}
\newcommand{\blue}[1]{\textcolor{blue}{#1}}
\def\dint{\mathrm{d}}
\DeclareMathOperator{\arcosh}{arcosh}
\DeclareMathOperator{\RST}{ \mathrm{RST} }
\DeclareMathOperator{\DSF}{ \mathrm{DSF} }
\newcommand{\kk}{ \varkappa }
\newcommand{\skk}{\sqrt{\kk}}
\newcommand{\id}{\mathbbm{1}}
\renewcommand{\thefootnote}{\fnsymbol{footnote}}
\titleformat*{\section}{\normalfont\large\bfseries}
\titleformat*{\subsection}{\normalfont\bfseries}
\date{\vspace{-0.95cm}}
\DeclareMathOperator*{\argmin}{argmin}
\let\@fnsymbol\@alph
\begin{document}

\title{\bfseries The radial spanning tree in hyperbolic space}

\author{%
	\hspace*{1.6cm} 	  Daniel Rosen\footnotemark[1]
	\and  Matthias Schulte\footnotemark[2]\hspace*{1.6cm} 
	\and Christoph Th\"ale\footnotemark[3]%
	\and Vanessa Trapp\footnotemark[4]%
}

\date{}
\renewcommand{\thefootnote}{\fnsymbol{footnote}}

\footnotetext[1]{%
	Technical University of Dortmund, Germany. Email: daniel.rosen@math.tu-dortmund.de
}	

\footnotetext[2]{%
Hamburg University
of Technology, Germany. Email: matthias.schulte@tuhh.de
}

\footnotetext[3]{%
	Ruhr University Bochum, Germany. Email: christoph.thaele@rub.de
}

\footnotetext[4]{%
	Hamburg University
	of Technology, Germany. Email: vanessa.trapp@tuhh.de
}

\maketitle

\begin{abstract}\noindent
Consider a stationary Poisson process $\eta$ in a $d$-dimensional hyperbolic space of constant curvature $-\varkappa$ and let the points of $\eta$ together with a fixed origin $o$ be the vertices of a graph. Connect each point $x\in\eta$ with its radial nearest neighbour, which is the hyperbolically nearest vertex to $x$ that is closer to $o$ than $x$. This construction gives rise to the hyperbolic radial spanning tree, whose geometric properties are in the focus of this paper. In particular, the degree of the origin is studied. For increasing balls around $o$ as observation windows, expectation and variance asymptotics as well as a quantitative central limit theorem for a class of edge-length functionals are derived. The results are contrasted with those for the Euclidean radial spanning tree.
	
	\smallskip\noindent
	\textbf{Keywords.} Central limit theorem, hyperbolic stochastic geometry, Poisson process, radial spanning tree.
	
	\smallskip\noindent
	\textbf{MSC 2010.} Primary 60D05; Secondary 51M09, 52A55, 60F05, 60G55.
\end{abstract}

\section{Introduction and main results}

\subsection{General introduction}

Stochastic geometry is the branch of probability theory concerned with spatial random structures such as random graphs or random tessellations. While the underlying space is classically the Euclidean space, a recent trend is to investigate such models also in non-Euclidean spaces and, in particular, in the hyperbolic space. Following this line of research, we consider the so-called radial spanning tree, a well-known model in the Euclidean case, in hyperbolic space.

Studying models from stochastic geometry in hyperbolic space of constant negative curvature has several motivations. From a mathematical point of view replacing Euclidean space by other underlying spaces is a natural problem and the hyperbolic space is the first candidate. Working with such a space is helpful in understanding which properties of a stochastic geometry model depend on the curvature and which do not. On the other hand, many real-world networks such as the internet graph seem to have an underlying hyperbolic structure \cite{BogunaEtAl,KrioukovEtAl}. In fact, the random geometric graph in hyperbolic space introduced in these papers shares many crucial properties of complex networks.

When replacing Euclidean by hyperbolic space some new phenomena can emerge as illustrated by the following non-exhaustive list of examples. In \cite{BS}, it was shown that for some parameter choices in Poisson-Voronoi-Bernoulli percolation in the hyperbolic plane one simultaneously has infinitely many black and infinitely many white components, which cannot occur in Euclidean space. For continuum percolation of the Boolean model in hyperbolic space it was shown in \cite{Tykesson} that there exists a regime with infinitely many unbounded components, a situation not possible in Euclidean space. For the low intensity Boolean model in the hyperbolic plane it was shown in \cite{BJST} that an observer located at a fixed non-covered point can see infinitely far in a fractal set of directions, a phenomenon in sharp contrast to the Euclidean situation, where such an observer can only see a bounded region. The same has also been shown in \cite{BJST}  for Poisson line processes in the hyperbolic plane. While the surface area of a Poisson hyperplane process in an increasing observation window satisfies a central limit theorem in Euclidean space, this is only true in dimensions two and three in hyperbolic space \cite{HHT}, whereas non-Gaussian limiting distributions arise for all higher space dimensions \cite{KRT}.

In this paper, we consider the radial spanning tree, which was introduced as a random graph in Euclidean space in \cite{BB}, motivated by applications in communication networks. Its vertices are the points of a stationary Poisson process together with the origin. From each vertex except of the origin an edge is drawn to its nearest neighbour that is closer to the origin. The resulting graph is a random tree rooted at the origin. In \cite{BB} and \cite{BCT}, semi-infinite paths of the radial spanning tree in $\mathbb{R}^2$ were studied. Variance asymptotics and quantitative central limit theorems for edge-length functionals were derived in \cite{ST}. A crucial role for the analysis of the radial spanning tree plays the directed spanning forest, where each point is connected with its closest neighbour in the half-space generated by a fixed direction.

In hyperbolic space, the radial spanning tree was first considered in \cite{CFT1}, where the same path properties as for the two-dimensional Euclidean radial spanning tree were shown for any dimension. These results were further refined in \cite{CFT2}. The hyperbolic directed spanning forest was first studied in \cite{Flammant}. It shows a radically different behaviour than its Euclidean analogue. It is a tree containing infinitely many bi-infinite branches, while the Euclidean radial spanning forest is a countable collection of trees for $d\geq 4$ and does not admit bi-infinite branches.

In this paper, we investigate for the radial spanning tree in hyperbolic space the degree of the origin and sums of powers of the edge-lengths within an observation window. For the degree of the origin we show that it is in contrast to the Euclidean case not bounded. For the edge-length functionals evaluated for balls of increasing radius around the origin, we derive asymptotic formulas for the expectation and variance and establish a quantitative central limit theorem. These results are similar to those derived in \cite{ST} for the Euclidean case. The major difference is that a hyperbolic ball scales differently with its radius than a Euclidean ball and that in the asymptotic variance formula the integration over a half-space is replaced by integration over a horoball. Horoballs are important objects of hyperbolic geometry without Euclidean counterparts. They are obtained as limits of increasing balls with a fixed point in their boundaries and centres tending to infinity along a geodesic ray.

\medspace

This paper is organised as follows. In Subsection \ref{sec:MainResults}, we introduce the radial spanning tree constructed from the points of a stationary Poisson process in hyperbolic space and present our main results on the degree of the origin and the asymptotic behaviour of the edge-length functionals. They are compared with those for the Euclidean case in Subsection \ref{subsec:ComparisonEuclidean}. In Section \ref{sec:background_hyperbolic}, we collect important facts about hyperbolic space, especially some integration formulas, while Section \ref{sec:PPP} is devoted to Poisson processes and provides results for functionals of Poisson processes that are employed throughout the paper. Our findings for the degree of the origin are shown in Section \ref{sec:proofs_origin}. For the edge-length functionals the asymptotic behaviour of the expectation is established in Section \ref{sec:exp_LR}. The proofs of the variance asymptotics and the central limit theorem are presented in Sections \ref{sec:variance} and \ref{sec:CLT} and rely on a scaling property for the edge-length functionals considered in Section \ref{sec:scaling}.

\subsection{Model and main results}\label{sec:MainResults}

We start this section with a detailed description of our model; for unexplained notions and notation from hyperbolic geometry we refer to Section \ref{sec:background_hyperbolic} and for background material on {Poisson processes} to Section \ref{sec:PPP}.

 First, let $(X,d)$ be a metric space with a chosen base point $o\in X$, and let $\xi \subseteq X$ be a locally finite subset. The radial spanning tree $\RST(\xi)$ on $\xi$ with respect to the base point $o$ is the tree rooted at $o$ and defined as follows. The vertices of $\RST(\xi)$ are the points of $\xi\cup\{o\}$. Any vertex {$x \in \xi \setminus\{o\}$} is connected by an edge to its \emph{radial nearest neighbour} $n(x,\xi)$, which is by definition the nearest vertex to $x$ among all vertices which lie in the {open} ball $B(o, d(o, x))$ centred at $o$ with radius $d(o,x)$, that is,
\[
n(x, \xi) := \argmin_{\substack{y \in \xi \cup \{o\} \\ d(o,y)<d(o,x) }} d(x,y).
\]
In other words, $y$ is the radial nearest neighbour of $x$ precisely when $d(o,y)<d(o,x)$ and there are no points of {$\xi\cup\{o\}$} in the intersection of the balls $B(o, d(o, x)) \cap B(x, d(x,y))$. We remark that in the construction just described we implicitly assume that the radial nearest neighbours are unique, which in the random set-up below will indeed be the case.

In this paper we study the random radial spanning tree induced by a stationary {Poisson process} in a $d$-dimensional hyperbolic space {$\H^d(\kk)$ of constant sectional curvature $-\kk<0$, whose metric is denoted by $d_{\kk}$.} More precisely, let $\eta$ be a stationary {Poisson process} on $\H^d(\kk)$ with intensity $\gamma>0$, where stationary means that the law of $\eta$ is invariant under the full group of isometries of $\H^d(\kk)$. In particular, this implies that the intensity measure of $\eta$ is $\gamma$ times the $d$-dimensional Hausdorff measure $\cH^d_\kk$ on $\H^d(\kk)$. We fix once and for all an arbitrary point $o \in {\H^d(\kk)}$, referred to as the origin, and consider the radial spanning tree $\RST(\eta)$ on the full {Poisson process $\eta$,} as well as the radial spanning tree $\RST(\eta_R)$ on the restriction $\eta_R := \eta \cap {B^d(o, R)}$ of $\eta$ to the {open} hyperbolic ball ${B^d(o, R)}$ of radius $R > 0$, both with respect to the base point $o$. A simulation of such a radial spanning tree in the upper half-plane model can be seen in Figure \ref{fig:simulation}.
\begin{figure}
	\centering
	\includegraphics[scale=0.6]{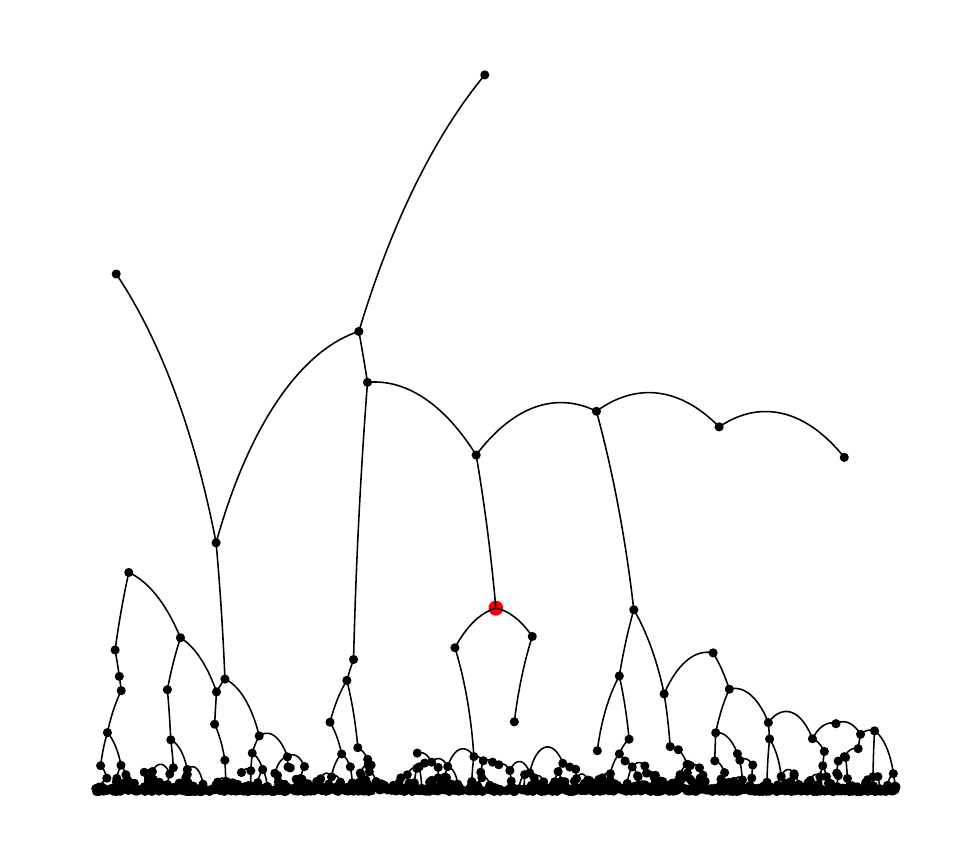}
	\caption{Simulation of an RST in $\mathbb{H}^2$ with the red point as the origin.}
	\label{fig:simulation}
\end{figure}

Our first result is concerned with the degree of the origin in the radial spanning tree $\RST(\eta)$, which we denote by $\deg(o)$. We remark that in Section \ref{subsec:ComparisonEuclidean} below we will compare our results with the ones for the corresponding construction in Euclidean space. For $k\in\N$ we let $\omega_{k}:={2\pi^{k/2}\over \Gamma(k/2)}$ be the surface content of the $(k-1)$-dimensional Euclidean unit sphere and $\kappa_k:=\omega_{k}/k$ be the volume of the $k$-dimensional Euclidean unit ball.

\begin{theorem}[Degree of the origin]\label{thm:DegreeOrigin}
	Consider the radial spanning tree $\RST(\eta)$ based on a stationary {Poisson process} $\eta$ on $\H^d(\kk)$ with intensity {$\gamma>0$} and $\kk>0$.
	\begin{itemize}
	\item[(a)] The degree of the origin has finite moments of all orders, that is, $\E [\deg(o)^n]<\infty$ for any $n\in\N$.

	\item[(b)] The expected degree of the origin is given by
 \begin{align}
		\E [\deg(o)] &= \gamma\omega_d \kk^{-{d\over 2}}\int_0^\infty \sinh^{d-1}(r) \nonumber\\
		&\quad\times\exp \left(-2\gamma\kappa_{d-1}  \kk^{-{d \over 2}} \int_0^{r/2} \left[{\cosh^2(r) \over \cosh^2(r-t)}-1\right]^{{d-1\over 2}} \,\dint t\right)\dint r. \label{eqn:expected_degree}
	\end{align}

	\item[(c)] The degree of the origin is not almost surely bounded, that is, for any $n\in\N$ it holds that $\PP(\deg(o)\geq n)>0$.
	\end{itemize}	
\end{theorem}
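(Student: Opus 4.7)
The starting observation for all three parts is that, setting
\[
L(o,x) := B^d(o, d_\kk(o,x)) \cap B^d(x, d_\kk(o,x))
\]
for the open lens cut out by the two balls through $o$ and $x$, the radial nearest neighbour of $x \in \eta$ is $o$ precisely when $\eta \cap L(o,x) = \emptyset$. Hence
\[
\deg(o) = \sum_{x \in \eta} \id\{\eta \cap L(o,x) = \emptyset\},
\]
and both (a) and (b) reduce to applications of the Mecke formula. For (a), the multivariate Mecke formula rewrites the $n$-th descending factorial moment as
\[
\gamma^n \int \id\{x_j \notin L(o,x_i)\,\forall i \neq j\}\, e^{-\gamma \cH^d_\kk(\bigcup_i L(o,x_i))}\,\cH^d_\kk(\dint x_1)\cdots\cH^d_\kk(\dint x_n);
\]
dropping the indicator and using $\cH^d_\kk(\bigcup_i L(o,x_i)) \geq \max_i \cH^d_\kk(L(o,x_i)) \geq \tfrac{1}{n}\sum_i \cH^d_\kk(L(o,x_i))$ bounds the integrand by $\prod_i e^{-\gamma \cH^d_\kk(L(o,x_i))/n}$, so that Fubini reduces the factorial moment to $\bigl(\gamma\int_{\H^d(\kk)} e^{-\gamma \cH^d_\kk(L(o,x))/n}\,\cH^d_\kk(\dint x)\bigr)^n$. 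The single integral is finite because $L(o,x)$ contains a hyperbolic ball of radius of order $d_\kk(o,x)/2$, so its volume grows like $e^{(d-1)\skk\, d_\kk(o,x)/2}$ and easily dominates the $e^{(d-1)\skk\, d_\kk(o,x)}$ growth of the hyperbolic sphere. Finiteness of all factorial moments yields finiteness of all ordinary moments.

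For (b), the univariate Mecke formula gives $\E[\deg(o)] = \gamma \int_{\H^d(\kk)} \exp(-\gamma \cH^d_\kk(L(o,x)))\,\cH^d_\kk(\dint x)$, and passing to geodesic polar coordinates around $o$ together with the standard rescaling absorbing $\skk$ produces the prefactor $\gamma\omega_d\kk^{-d/2}$ and the $\sinh^{d-1}(r)$ weight of the claim. The remaining task---computing $\cH^d_\kk(L(o,x))$ as a function of $r := d_\kk(o,x)$---is the technical heart of the proof. For this I would work in Fermi coordinates along the geodesic $[o,x]$: by the hyperbolic Pythagorean theorem a point at signed arclength $t$ and perpendicular distance $\rho$ from this geodesic lies in $L(o,x)$ iff both $\cosh(\skk\rho)\cosh(\skk t) < \cosh(\skk r)$ and $\cosh(\skk\rho)\cosh(\skk(r-t)) < \cosh(\skk r)$, with the second constraint binding on $t \in [0,r/2]$. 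The $t \leftrightarrow r-t$ symmetry reduces the lens volume to twice the corresponding half-integral; the inner $\rho$-integration is closed-form via $\int_0^{\rho_{\max}}\!\cosh(\skk\rho)\sinh^{d-2}(\skk\rho)\,\dint\rho = \sinh^{d-1}(\skk\rho_{\max})/((d-1)\skk)$, and substituting $\cosh(\skk\rho_{\max}) = \cosh(\skk r)/\cosh(\skk(r-t))$ together with $\sinh^2 = \cosh^2 - 1$ yields, after the same radial rescaling, precisely the bracket $[\cosh^2(r)/\cosh^2(r-t) - 1]^{(d-1)/2}$ appearing in the formula.

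For (c), fix $n \in \N$; the plan is to exhibit a positive-probability event forcing $\deg(o) \geq n$ by a geometric construction unavailable in Euclidean space. Place $n$ candidate points $x_1,\ldots,x_n$ equally spaced along a great circle of $\partial B^d(o,R)$. The hyperbolic law of cosines gives $\cosh(\skk d_\kk(x_i,x_j)) = \cosh^2(\skk R) - \sinh^2(\skk R)\cos\theta_{ij}$, so the requirement $d_\kk(x_i,x_j) > R + 1$ reduces to $\cos\theta_{ij} < 1 - c_R$ for some explicit $c_R > 0$ that tends to $0$ as $R \to \infty$; equal spacing $\theta = 2\pi/n$ thus meets this bound once $R$ is taken large enough. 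For such a configuration every $x_j$ lies outside $B^d(x_i,R) \supseteq L(o,x_i)$ for $i \neq j$ with room to spare, so for small enough $\eps > 0$ the balls $B^d(x_j,\eps)$ stay outside $L(o,\tilde x_i)$ for every $\tilde x_i \in B^d(x_i,\eps)$ and every $i \neq j$. The event that $\eta$ has exactly one point in each $B^d(x_i,\eps)$ and no points in the bounded set $V := \bigcup_i \bigcup_{y \in B^d(x_i,\eps)} L(o,y) \setminus \bigcup_i B^d(x_i,\eps)$ then has strictly positive probability by Poisson independence on disjoint regions, and on this event each of the $n$ selected Poisson points has $o$ as its radial nearest neighbour, forcing $\deg(o) \geq n$. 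The only genuinely delicate step is the Fermi-coordinate volume computation in (b); parts (a) and (c) instead rest on structural hyperbolic features, namely the exponential growth of balls and the purely hyperbolic ability to pack many points on a sphere at mutual distances exceeding its radius.
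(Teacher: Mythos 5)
Your proofs of parts (a) and (b) follow essentially the same route as the paper. In (a), the multivariate Mecke equation combined with $\cH^d_\kk(\bigcup_i L(o,x_i)) \geq \max_i \cH^d_\kk(L(o,x_i)) \geq \tfrac{1}{n}\sum_i \cH^d_\kk(L(o,x_i))$ and Fubini is exactly the paper's argument; the paper then invokes part (b) for finiteness while you give a short self-contained estimate, which is fine. One sentence needs rewording, though: saying the lens volume $\sim e^{(d-1)\skk r/2}$ ``easily dominates'' the sphere growth $e^{(d-1)\skk r}$ is literally false. What actually makes the single integral converge is that $\exp\bigl(-\tfrac{\gamma}{n}\cH_\kk^d(L(o,x))\bigr)$ decays doubly exponentially in $r$ and therefore overwhelms the merely singly exponential spherical area element. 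The conclusion is right; the justification as stated is not. In (b), your Fermi-coordinate computation is the same as the paper's Lemma~\ref{lem:disintegration_planes} disintegration along hyperplanes orthogonal to $[o,x]$ followed by polar integration within each hyperplane; the $\cosh$ factor you include is exactly the Jacobian from that lemma, the $t\leftrightarrow r-t$ symmetry is the paper's ``two caps of height $r/2$,'' and the Pythagorean cross-section radius and the $\sinh^{d-1}$ antiderivative match the paper's steps.

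Part (c) is a genuine, and arguably cleaner, variant. The paper constructs a regular hyperbolic $n$-gon in a $2$-plane through $o$, choosing the circumradius $r = (1+\tfrac{2}{\skk})\log n$ so that the law of sines forces the side length to exceed $r$; you instead fix $n$, place the $n$ points equally spaced on the sphere of radius $R$, and let the law of cosines show that for $R$ large enough the pairwise distances exceed $R+1$. Both exploit the same hyperbolic phenomenon (chords on a sphere can exceed its radius), but you sidestep the explicit $r(n)$ formula and the $\operatorname{arsinh}$ lower bound, at the cost of not quantifying how large $R$ must be, which is harmless for a positive-probability argument. The $\eps$-ball bookkeeping, the bounded exclusion set $V \subset B^d(o,R+\eps)$, and the Poisson independence on disjoint regions all check out (you need $\eps<1/3$, implicit in ``small enough $\eps$'').
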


We turn to the study of metric properties of the radial spanning tree. {As before, for} {$R>0$ we denote by $\eta_R$ the restriction of $\eta$ to $B^d(o,R)$, i.e.\ $\eta_R$ are the points of $\eta$ belonging to $B^d(o,R)$. We are interested in the length of the edges from points of $\eta_R$ to their radial nearest neighbours.} For $\alpha\geq 0$ we denote by $\cL_{R,\gamma,\kk}^{(\alpha)}$ the {edge-length functional}
\[
\cL_{R,\gamma,\kk}^{(\alpha)} := \sum_{x \in \eta_R} \ell_\kk(x, {\eta})^\alpha = \sum_{x \in \eta_R} d_\kk(x, n(x, {\eta}))^\alpha,
\]
where we write {$\ell_\kk(x, \eta)$ for $d_\kk(x, n(x, \eta))$,} the hyperbolic distance between $x$ and its radial nearest neighbour. We are interested in the probabilistic behaviour of $\cL_{R,\gamma,\kk}^{(\alpha)}$, as $R\to\infty$. To simplify our presentation, we shall write 
\[
V_\kk(R)  := \cH_\kk^d\big({B^d(o,R)}\big)
\]
for the hyperbolic volume of a ball of radius $R$. We remark that it is well known that, up to constants, $V_\kk(R)$ grows like $e^{(d-1)\sqrt{\kk} R}$, as $R \to \infty$, see \eqref{eq:volume_ball} below. The following theorems are restricted to  $\alpha>0$ since for $\alpha=0$ one only counts the number of points {of $\eta$} in $B^d(o,R)$, {which is Poisson distributed with parameter $\gamma V_\kk(R)$.}

\begin{theorem}[Expectation and variance for {edge-length} functionals]\label{thm:ExpecationVariance}
Consider for $\alpha> 0$ and $R>0$ the edge-length functional $\cL_{R,\gamma,\kk}^{(\alpha)}$ of the radial spanning tree $\RST(\eta)$ based on a stationary Poisson process $\eta$ on $\H^d(\kk)$ with intensity $\gamma>0$ and $\kk>0$.
	\begin{itemize}
	\item[(a)] It holds that
	\begin{align}\label{eqn:expectation_length_functional}
	\lim\limits_{R\to\infty}\frac{\E[\cL_{R,\gamma,\kk}^{(\alpha)}]}{V_\kk(R)}&=\gamma\int_0^\infty e^{-\gamma \kk^{-d/2} G (\skk u^{1/\alpha})}\mathrm{d}u,
	\end{align}
	where $G(u)$ is given by
	\begin{equation*}
	G(u) := \kappa_{d-1}\int_0^u \left[2 e^{- t}\big(\cosh ( u) - \cosh ( t) \big)\right]^{{d-1\over 2}}\,\dint  t.
	\end{equation*}

	\item[(b)] The limit 
	\begin{align*}
		\VV^{(\alpha)}_{\gamma,\kk} := \lim\limits_{R\to\infty}\frac{\Var[\cL_{R,\gamma,\kk}^{(\alpha)}]}{V_\kk(R) }
	\end{align*}
	exists. Moreover, for all $c_0>0$, there are constants $\underline{c},\overline{c}>0$ depending on $\alpha$, $d$ and $c_0$ such that
	$$\underline{c} \leq \gamma^{{2\alpha\over d}-1} \,\VV^{(\alpha)}_{\gamma,\kk} \leq \overline{c}$$ for all $\gamma>0$ and $\kk>0$ with $\kk^{-d/2}\gamma\geq c_0$.
	\end{itemize}
\end{theorem}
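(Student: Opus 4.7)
For part (a), I would apply the Mecke formula to obtain
\[
\E\bigl[\cL_{R,\gamma,\kk}^{(\alpha)}\bigr] = \gamma \int_{B^d(o,R)} \E\bigl[\ell_\kk(x,\eta\cup\{x\})^\alpha\bigr] \,\dint \cH^d_\kk(x).
\]
For a point $x$ at hyperbolic distance $r$ from $o$, the event $\{\ell_\kk(x,\eta\cup\{x\})>t\}$ is the event that $\eta$ contains no points in the lens $B^d(o,r)\cap B^d(x,t)$, so
\[
\PP\bigl(\ell_\kk(x,\eta\cup\{x\})>t\bigr) = \exp\bigl(-\gamma W_\kk(r,t)\bigr),
\]
where $W_\kk(r,t):=\cH^d_\kk(B^d(o,r)\cap B^d(x,t))$ depends only on $r$ by isometric invariance. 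Writing $\E[\ell^\alpha]=\int_0^\infty \PP(\ell^\alpha>u)\,\dint u$ and switching to hyperbolic polar coordinates centred at $o$ reduces the computation to a one-dimensional integral in $r$. Since $V_\kk(R)$ grows exponentially in $R$, a L'H\^opital-type argument shows that the ratio in \eqref{eqn:expectation_length_functional} equals $\gamma$ times the limit as $r\to\infty$ of the inner $u$-integral. Identifying this limit amounts to computing $\lim_{r\to\infty}W_\kk(r,t)$ as the hyperbolic volume of $B^d(x,t)$ intersected with the horoball whose boundary passes through $x$ and is based at the ideal endpoint of the ray from $o$ through $x$; an explicit horoball parametrisation yields the closed form $\kk^{-d/2}G(\skk t)$ and hence the claimed identity.

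For part (b), I would use the Fock-space chaos decomposition for Poisson functionals,
\[
\Var\bigl[\cL_{R,\gamma,\kk}^{(\alpha)}\bigr] = \sum_{k\geq 1}\frac{\gamma^k}{k!} \int \bigl(\E\bigl[D^k_{x_1,\dots,x_k}\cL_{R,\gamma,\kk}^{(\alpha)}\bigr]\bigr)^2 \,\dint\cH^d_\kk(x_1)\cdots\dint\cH^d_\kk(x_k),
\]
where $D^k$ is the iterated add-cost operator. By isometric invariance each summand reduces to a radial integral in the points $x_i$, and the exponential volume growth of $V_\kk(R)$ again lets one extract the limit by evaluating the integrand at $r=\infty$, i.e.\ after replacing the ball $B^d(o,r)$ with the corresponding horoball, in the same spirit as part (a). A quantitative stabilisation estimate---bounding the probability that $D_x\cL_R^{(\alpha)}$ depends on Poisson points outside a bounded neighbourhood of $x$---is needed to apply dominated convergence both within each summand and when summing over $k$. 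The two-sided bounds then follow from the scaling property of Section \ref{sec:scaling}, which identifies $(\gamma,\kk)\mapsto\VV^{(\alpha)}_{\gamma,\kk}$ with a one-parameter family indexed by $\gamma':=\gamma\kk^{-d/2}$, so that the inequality becomes a claim about $(\gamma')^{2\alpha/d-1}\VV^{(\alpha)}_{\gamma',1}$ on $[c_0,\infty)$. The upper bound follows from the same stabilisation estimate applied uniformly in $\gamma'$, while the lower bound is obtained by isolating the contribution to the $k=1$ chaos from a typical point $x$ whose radial edge length has strictly positive conditional variance.

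The main obstacle is the stabilisation estimate in (b): one must produce an exponentially decaying tail for the radius of stabilisation of the radial nearest-neighbour map, uniform both in the distance of $x$ to $o$ and in the effective parameter $\gamma\kk^{-d/2}\geq c_0$. Horoballs replace Euclidean half-spaces in the limiting picture, and one has to verify that the exponential volume growth of hyperbolic balls--–while helpful for quickly populating the relevant lens regions–--does not destroy the uniformity in $(\gamma,\kk)$. Once this stabilisation input is available, the rest of (b) is essentially a combination of the Mecke formula, the scaling reduction, and the horoball asymptotics already developed for part (a).
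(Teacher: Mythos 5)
Part (a) of your proposal follows essentially the same path as the paper: Mecke, void-probability formula for the radial edge length, polar coordinates, L'H\^opital, and identification of the limit via the ball-to-horoball convergence (the paper isolates this as Lemma \ref{lem:Intersection}). No issues there.

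Part (b) is where your route genuinely diverges, and it contains a gap. The paper does not use the Wiener--It\^o chaos decomposition: for the \emph{existence} of the limit it applies the multivariate Mecke formula directly to $\Var[\cL_R^{(\alpha)}]$, producing a clean two-term expression $T(R)+\E[\cL_R^{(2\alpha)}]$ in which $T(R)$ is a double integral of the pair correlation $\E[\ell^{(y)}(x)^\alpha\ell^{(x)}(y)^\alpha]-\E[\ell(x)^\alpha]\E[\ell(y)^\alpha]$, and then uses a decorrelation estimate (Lemma \ref{lem:majorant}) plus the horoball limit. For the \emph{two-sided bounds} it uses the Poincar\'e inequality (upper) and the reverse Poincar\'e inequality of \cite{STr} (lower), both of which are phrased in terms of $\E[(D_zF)^2]$; the lower bound rests on Lemma \ref{lem:D_lower_bound}, which shows $\PP(D_z\cL_R^{(\alpha)}\geq\gamma^{-\alpha/d}c)\geq C$ uniformly.

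Your proposed lower bound via the first Wiener chaos does not obviously work. The $k=1$ term of the chaos decomposition is $\gamma\int(\E[D_zF])^2\,\cH^d_\kk(\dint z)$, which controls $(\E[D_zF])^2$, not $\E[(D_zF)^2]$. These are genuinely different: adding a point $z$ both creates a new edge (positive contribution) \emph{and} can shorten existing edges of nearby points whose new radial nearest neighbour becomes $z$ (negative contribution), so $D_z\cL_R^{(\alpha)}$ changes sign. In fact, by a Margulis--Russo argument, $\int\E[D_zF]\,\cH^d_\kk(\dint z)=\frac{\partial}{\partial\gamma}\E[\cL_R^{(\alpha)}]$, and for $\alpha\geq d$ the asymptotic mean scales as $\gamma^{1-\alpha/d}V_\kk(R)$, which is non-increasing in $\gamma$; so $\E[D_zF]$ cannot be bounded below by a positive constant in that regime, and may vanish on a large set. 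Your phrase ``strictly positive conditional variance of the radial edge length'' does not repair this, since positive variance of $D_zF$ does not lower-bound $|\E[D_zF]|$. To close the argument you would need to replace the first-chaos lower bound by something that sees the second moment of $D_zF$, which is exactly what the reverse Poincar\'e inequality used in the paper provides, together with the positive-probability event bound of Lemma \ref{lem:D_lower_bound}.

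Finally, while the chaos-decomposition route to the \emph{existence} of the limit could in principle be made to work via stabilisation, you would have to control all chaoses $k\geq 1$ uniformly (not just dominated convergence ``within each summand''), which is substantially heavier machinery than the paper's two-term Mecke decomposition plus the single decorrelation Lemma \ref{lem:majorant}. The scaling reduction to $\gamma'=\gamma\kk^{-d/2}$ you invoke is indeed the same as the paper's Proposition \ref{prop:Scaling}.
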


\begin{rema}\rm 
An exact formula for the asymptotic variance constant $\VV^{(\alpha)}_{\gamma,1}$ will be given in Proposition \ref{prop:var_limit}. Combining this result with \eqref{eqn:asymptotic_variance_kk} provides an explicit formula for $\VV^{(\alpha)}_{\gamma,\kk}$ as well.
\end{rema}

Finally, we turn to the central limit theorem for the {edge-length functionals.} To provide a quantitative version of such a result, we consider both, the Wasserstein and the Kolmogorov distance. We recall that for two random variables $X$ and $Y$ the former is defined as
\[
d_W(X,Y) := \sup_{h} |\E [h(X)] - \E[ h(Y)]|,
\]
where the supremum is taken over all {functions $h : \R \to \R$ with Lipschitz constant at most one, if $\mathbb{E}[|X|],\mathbb{E}[|Y|]<\infty$,} while the latter is defined by
\[
d_K(X,Y) := \sup_{t \in \R} \left| \PP(X \leq t) - \PP(Y \leq t) \right|.
\]
With this notation, our central limit theorem takes the following form.

\begin{theorem}[Central limit theorem for edge-length functionals]\label{thm:CLT}
For $c_0>0$ and $\alpha>0$ consider the radial spanning tree $\RST(\eta)$ based on a stationary Poisson process $\eta$ on $\H^d(\kk)$ with $\kk>0$ and intensity $\gamma>0$ such that $\varkappa^{-d/ 2}\gamma \geq c_0$ and the edge-length functional $\cL_{R,\gamma,\kk}^{(\alpha)}$ for $R>0$. Denote by $N$ a standard Gaussian random variable. Then there exist constants $C>0$ and $r_0>0$ only depending on $d$, $\alpha$ and $c_0$ such that
	\begin{align*}
	d_\diamondsuit\left(\frac{\cL_{R,\gamma,\kk}^{(\alpha)}-\E[\cL_{R,\gamma,\kk}^{(\alpha)}]}{\sqrt{\Var[\cL_{R,\gamma,\kk}^{(\alpha)}]}},N\right)\leq \frac{C}{\sqrt{\gamma V_\kk(R) } }
	\end{align*}
	for $R\geq r_0$ and $\diamondsuit\in\{W,K\}$.
\end{theorem}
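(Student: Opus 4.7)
The plan is to apply a Malliavin--Stein bound for Poisson functionals in the spirit of Last--Peccati--Schulte, which controls both $d_W$ and $d_K$ between a centred and normalised Poisson functional $\tilde F$ and a standard Gaussian $N$ by an expression of the form
\[
\frac{C'}{\sigma^2}\Bigl(\sqrt{\gamma_1(F)} + \gamma_2(F) + \gamma_3(F)\Bigr),
\]
with a similar additional summand for $d_K$, where $\sigma^2 = \Var F$ and $\gamma_1,\gamma_2,\gamma_3$ are integrals of fourth moments of the first- and second-order difference operators $D_x F$ and $D^2_{x,y} F$. Applied to $F = \cL_{R,\gamma,\kk}^{(\alpha)}$, the variance lower bound from Theorem \ref{thm:ExpecationVariance}(b) yields $\sigma^2 \geq \underline{c}\,\gamma^{1-2\alpha/d} V_\kk(R)$ for $R$ large, so the target rate $1/\sqrt{\gamma V_\kk(R)}$ will follow once the $\gamma_i$ are shown to match this variance scaling up to a single extra factor of $\gamma V_\kk(R)$.

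The main analytic task is to bound these difference operators. Adding a point $x$ to $\eta$ changes $\cL_{R,\gamma,\kk}^{(\alpha)}$ only through the edge issued by $x$ itself (if $x \in B^d(o,R)$) and through those $y \in \eta$ whose radial nearest neighbour switches to $x$; such $y$ must lie farther from $o$ than $x$ and within distance $\ell_\kk(y,\eta)$ of $x$. These contributions are controlled by a stabilization radius $\rho(x,\eta)$ whose tails decay rapidly, by a standard Poisson exclusion argument applied to the horoball-shaped region lying between $x$ and the radial direction $\overrightarrow{ox}$. Crucially, $D^2_{x,y}\cL_{R,\gamma,\kk}^{(\alpha)}$ vanishes unless the stabilization regions of $x$ and $y$ interact, which forces $d_\kk(x,y) \leq \rho(x,\eta)+\rho(y,\eta)$; this geometric exclusion is what collapses the double integrals appearing in $\gamma_2,\gamma_3$ to order $\gamma V_\kk(R)$ rather than $(\gamma V_\kk(R))^2$.

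The hardest step, and the one I expect to be the main obstacle, is obtaining stabilization and moment estimates that are \emph{uniform} in $\gamma$ and $\kk$ throughout the regime $\kk^{-d/2}\gamma \geq c_0$. I would first invoke the scaling property of Section \ref{sec:scaling} to reduce to $\kk = 1$ with a rescaled intensity bounded below by $c_0$, and then carry out a purely hyperbolic tail estimate for $\rho(x,\eta)$ using the exponential volume growth of hyperbolic balls and essentially the same horoball geometry already visible in Theorem \ref{thm:ExpecationVariance}. Combining the tail bound with $L^p$-moments of $\ell_\kk(x,\eta)^\alpha$ obtained from the expectation analysis of Section \ref{sec:exp_LR} yields uniform upper bounds on $\E|D_xF|^4$ and $\E|D^2_{x,y}F|^4$, from which the desired bound on $\gamma_i$ follows by integrating against the stabilization kernel on $\H^d(\kk)\times\H^d(\kk)$. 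Substituting into the Malliavin--Stein bound and using the variance lower bound then produces the announced rate of convergence in both $d_W$ and $d_K$, with constants depending only on $d$, $\alpha$ and $c_0$.
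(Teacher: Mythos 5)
Your proposal follows essentially the same route as the paper: reduce to $\kk=1$ via the scaling relation of Proposition \ref{prop:Scaling}, invoke a Last--Peccati--Schulte Malliavin--Stein bound (the paper uses the version stated as Theorem \ref{thm:normalapproximation_bounds_dW_dK}, phrased via moment bounds and $\PP(D^2_{x,y}F\neq 0)$ rather than your $\gamma_1,\gamma_2,\gamma_3$, but this is a cosmetic difference within the same family of bounds), obtain uniform bounds on the difference operators via a stabilization/exponential-tail argument (Lemmas \ref{lem:bounded_moments}, \ref{lem:prop_seconddiff_neq_0}, \ref{lem:help_integral}), and divide by the variance lower bound (Proposition \ref{prop:var_bound}). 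The ingredients and their roles match the paper's proof.
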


\begin{rema}
{\rm One can also study the edge-length functionals for $\alpha<0$. For the Euclidean case quantitative central limit theorems for $\alpha\in(-d/2,0)$ were derived in \cite{Trauthwein}. Note that for $\alpha<0$ short edges become crucial and that the restriction $\alpha>-\frac{d}{2}$ comes from the fact that second moments do otherwise not exist. We believe that similar results can be established for the hyperbolic case. As this requires different proofs than for $\alpha>0$, we refrained from considering this situation.}
\end{rema}

\subsection{Comparison with the Euclidean case}\label{subsec:ComparisonEuclidean}

Let us compare the results of the previous section with those available in the Euclidean case, which formally corresponds to the choice $\kk =0$ {and is, thus, excluded in our results.} For this, we consider the radial spanning tree in $\R^d$ with respect to the origin generated by a stationary {Poisson process $\eta^{\rm E}$ with intensity $\gamma>0$ in $\mathbb{R}^d$.} The mean degree of the origin in this Euclidean radial spanning tree is given by
\begin{equation}\label{eq:exp_deg_eucl}
\begin{split}
\E[ \deg^{\rm E}(o) ]&= \gamma\omega_d\int_0^\infty r^{d-1}\exp\left(-2\gamma\kappa_{d-1} \beta_d r^{d} \right)\,\dint r\\
									 &= {\omega_d\over 2d\kappa_{d-1}\beta_d} = {\sqrt{\pi}  \over 2\beta_d} \, { \Gamma\left({d \over 2} + {1 \over 2}\right)  \over  \Gamma\left({d \over 2} + {1}\right) },
\end{split}
\end{equation}
where {$2\kappa_{d-1}\beta_d$ is the volume of the intersection of two balls in $\mathbb{R}^d$ with radius one whose centres have distance one. This} can be seen by adapting the proof of Theorem \ref{thm:DegreeOrigin} (b) to the Euclidean set-up {and was shown for $d=2$ in \cite[Equation (7)]{BB}. The constant $\beta_d$ can be written as $\beta_d= {1\over 2} B_{3 \over 4}({d+1\over 2},{1\over 2})$ with the incomplete beta function $B_x(a,b) := \int_0^x t^{a-1}(1-t)^{b-1}\,\dint t$.} For example, $\beta_2={\pi\over 6}-{\sqrt{3}\over 8}$ and $\beta_3={5\over 24}$.
The Euclidean counterpart to the edge-length functional $\cL_{R,\gamma,\kk}^{(\alpha)}$ is
$$
\mathcal{L}^{(\alpha),\rm E}_{R,\gamma} := \sum_{x\in\eta_R^E}\ell_{\rm E}(x,\eta^{\rm E})^\alpha,
$$
where $B_R^{d,{\rm E}}$ is the centred Euclidean ball of radius $R>0$, $\eta_R^{\rm E}:=\eta^{\rm E}\cap B_R^{d,{\rm E}}$ is the restriction of $\eta^{\rm E}$ to $B_R^{d,{\rm E}}$ , $\ell_{\rm E}(x,\eta^{\rm E})$ denotes the Euclidean distance from $x$ to its radial nearest neighbour in $\eta^{\rm E}$ and $\alpha> 0$. The asymptotic behaviour for $R\to\infty$ of the expectation is given by
\begin{align}\label{eq:exp_length_eucl}
{\lim_{R \to \infty} \frac{\E[\mathcal{L}^{(\alpha),\rm E}_{R,\gamma}]}{V_{\rm E}(R)} = \gamma \int_0^\infty e^{-\gamma{\kappa_d\over 2}u^{d/\alpha}}\,\dint u } = \gamma\Big({2\over\gamma\kappa_d}\Big)^{\alpha/d}\Gamma\Big({\alpha\over d}+1\Big),
\end{align}
where $V_{\rm E}(R):= {\kappa_d} R^d$  is the volume of a Euclidean ball of radius $R$. This follows from rewriting \cite[(1.2) in Theorem 1.1]{ST}, where the $+1$ is missing in the argument of the Gamma function due to an inaccuracy in the last step of the proof. Note that the polynomial volume growth in \eqref{eq:exp_length_eucl} is in sharp contrast to the exponential growth of $V_\kk(R)$ for $\kk>0$ in the hyperbolic case.

We emphasise that all findings of the previous section do not cover the case $\kk=0$ and it is also a priori not clear that the Euclidean results can be derived by taking $\kk\to 0$. However, {letting $\kk\to 0$  in Theorem \ref{thm:DegreeOrigin} and Theorem \ref{thm:ExpecationVariance} (a) leads to \eqref{eq:exp_deg_eucl} and \eqref{eq:exp_length_eucl}, respectively. Indeed, since substitution yields
\begin{align*}
& \gamma\omega_d \kk^{-{d\over 2}}\int_0^\infty \sinh^{d-1}(r) \exp \left(-2\gamma\kappa_{d-1}  \kk^{-{d \over 2}} \int_0^{r/2} \left[{\cosh^2(r) \over \cosh^2(r-t)}-1\right]^{{d-1\over 2}} \,\dint t\right)\dint r \\
& = \gamma\omega_d \int_0^\infty \frac{\sinh^{d-1}(\sqrt{\kk} r)}{\kk^{d-1\over 2}}  \exp \left(-2\gamma\kappa_{d-1} \int_0^{r/2} \left[\frac{1}{\kk}\left({\cosh^2(\sqrt{\kk}r) \over \cosh^2(\sqrt{\kk}(r-t))}-1\right)\right]^{{d-1\over 2}} \,\dint t\right)\dint r
\end{align*}
and one has
$$
\lim_{\kk\to 0} \frac{\sinh(\sqrt{\kk}r)}{\sqrt{\kk}} = r \quad \text{and} \quad \lim_{\kk\to 0} \frac{1}{\kk}\left({\cosh^2(\sqrt{\kk}r) \over \cosh^2(\sqrt{\kk}(r-t))}-1\right) = r^2-(r-t)^2,
$$
one can use the dominated convergence theorem to show that the right-hand side of \eqref{eqn:expected_degree} converges to the expression in \eqref{eq:exp_deg_eucl}. The convergence of the right-hand side of \eqref{eqn:expectation_length_functional} to \eqref{eq:exp_length_eucl} follows from
$$
\lim_{u\to 0} \frac{G(u)}{u^d} = \frac{\kappa_d}{2}
$$
and a further application of the dominated convergence theorem.} For the sake of comparison, Table \ref{tab:DegreeOrigin} and Table \ref{tab:Length} show values of the expected degree of the origin and the asymptotic expected volume-normalised edge-length functional with $\alpha=1$ in the hyperbolic case with $\kk=1$, and the Euclidean case, for low dimensions $d$.

\begin{table}
	\begin{minipage}{0.45\columnwidth}
			\begin{equation*}
	\begin{array}{c | c | c | c}
	d &  \kk=1  & \kk=0  \text{ exact} & \kk=0  \text{ approx.}\\
	\hline 
	2 & 2.931  & {6\pi\over 4\pi-3\sqrt{3}} & 2.558\\
	3 & 3.985 & {16\over 5} & 3.200\\
	4 & 5.397 & {12\pi\over 8\pi-9\sqrt{3}} & 3.950\\ 
	5 & 7.332 & {256\over 53} & 4.832\\
	6 & 10.010 & {30\pi\over 20\pi-27\sqrt{3}} & 5.866\\
	7 & 13.749 & {2048\over 289} & 7.087\\
\end{array}
\end{equation*}
\caption{Expected degree of the origin in the radial spanning tree for $\kk=1$ (hyperbolic case) and $\kk=0$ (Euclidean case) and $d\in\{2,\ldots,7\}$.}
\label{tab:DegreeOrigin}
	\end{minipage}
	\hfill
	\begin{minipage}{0.45\columnwidth}
		\vspace*{0.47cm}
\begin{equation*}
	\begin{array}{c | c | c}
		d & \kk=1 & \kk=0 \\
		\hline 
		2 & 0.7591& 0.7071\\
		3& 0.7514 & 0.6979\\
		4  & 0.7762 & 0.7232\\
		5& 0.8087 & 0.7566\\
		6  & 0.8426 & 0.7920\\
		7 &  0.8761 & 0.8273\\
	\end{array}
\end{equation*}
	\caption{{Asymptotic} expected volume-normalised total {edge-length} (i.e.\ $\alpha=1$)  of the radial spanning tree for $\kk=1$ (hyperbolic case) and $\kk=0$ (Euclidean case) and $d\in\{2,\ldots,7\}$.}
	\label{tab:Length}
\end{minipage}
\end{table}

In Section \ref{sec:MainResults} we have approached the asymptotic geometry of edge-length functionals of the hyperbolic radial spanning tree by fixing the intensity $\gamma$ and the curvature $\kk$ and expanding the size of the ball in which the functional is observed. Alternatively, one can fix the size and the curvature and consider the asymptotics as $\gamma\to\infty$. From Proposition \ref{prop:Scaling} below it follows that $\cL_{1,\gamma,\kk}^{(\alpha)}$ has the same distribution as $\gamma^{-\alpha/d}\cL_{\gamma^{1/d},1,\gamma^{-2/d}\kk}$ so that letting the intensity grow is equivalent to increasing the observation window and letting the curvature parameter go to zero. Since $\kk=0$ corresponds to the Euclidean case, for $\gamma\to \infty$ and fixed $R>0$ the effects of the hyperbolic space might vanish and one can expect the same results as for the Euclidean space in \cite{ST}. This is further supported by the observation that for increasing $\gamma$ the edges become shorter and $\mathbb{H}^d$ can  locally be approximated by $\mathbb{R}^d$.

\section{Background material about hyperbolic geometry}\label{sec:background_hyperbolic}

\subsection{Hyperbolic space of curvature $-\kk$}
We denote by $\H^d(\kk)$ the $d$-dimensional hyperbolic space of curvature $-\kk$, i.e. the unique complete and simply connected Riemannian manifold of constant sectional curvature $-\kk$. Throughout this paper we fix the parameter $\kk > 0$ and often abbreviate $\H^d(1)$ by $\H^d$.
We will extensively use the upper half-space model for hyperbolic space (see, e.g., \cite[Chapter 3]{Lee}), which is the set
\[
\{x = (x_1, \ldots, x_d) \in \R^d \,:\, x_d > 0\}
\]
equipped with the hyperbolic Riemannian metric
\[
g_{\kk} := {1 \over \kk}\,{\dint x_1^2 + \cdots + \dint x_d^2 \over x_d^2}. 
\]
The hyperbolic geodesics in this model are realised by Euclidean rays and Euclidean circular arcs normal to the boundary hyperplane $\{x_d = 0\}$, see Figure \ref{fig:model_H2} for the case $d=2$. 
The Riemannian distance function is given by
\[
d_\kk(x,y) = {1 \over \sqrt{\kk}} \,{\rm arcosh}\bigg({1+}{|x-y|_{\rm E}^2  \over 2x_d y_d}\bigg),
\]
where $|\,\cdot\, |_{\rm E}$ {stands for} the Euclidean norm. 

The \emph{boundary} of hyperbolic space, denoted by  $\partial \H^d(\kk)$, in this  model is the  boundary hyperplane $\{x_d =0\}$ compactified by adding the point at $\infty$. In particular, topologically it is a $(d-1)$-dimensional sphere. We call points of $\partial\H^d(\kk)$ \emph{ideal points}. Observe that every geodesic ray ${\tau}\colon[t_0,\infty)\to\H^d(\kk)$ tends to a unique ideal point ${\tau}({\infty}) \in\partial\H^d(\kk)$.

	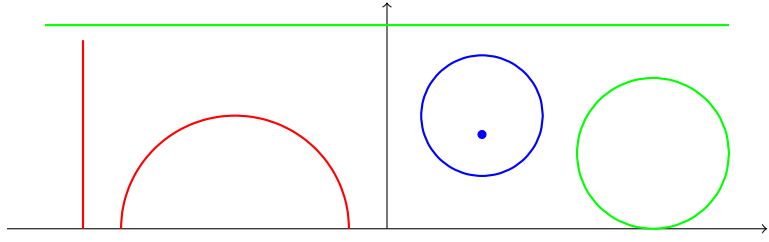
\begin{figure}[t]
	\centering
	\begin{tikzpicture}
		\draw[->] (-5,0) -- (5,0);
		\draw[->] (0,0) -- (0,3);
		\draw[red,thick] (-4,0) -- (-4,2.5);
		\draw[red,thick,domain=0:180,smooth] plot ({1.5*cos(\x)-2}, {1.5*sin(\x)});
		\draw[blue,thick,domain=0:360,smooth] plot ({0.8*cos(\x)+1.25}, {0.8*sin(\x)+1.5});
		\filldraw[blue] (1.25,1.25) circle (1.5pt);
		\draw[green,thick] (-4.5,2.7) -- (4.5,2.7);
		\draw[green,thick,domain=0:360,smooth] plot ({cos(\x)+3.5}, {sin(\x)+1});
	\end{tikzpicture}
	\caption{Hyperbolic lines (red), circles (blue) and horocycles (green) in the upper half-plane model for $\H^2$.}\label{fig:model_H2}
	\end{figure}

	Of particular importance to us are \emph{horospheres} in hyperbolic space, which are, informally speaking, spheres of infinite radius. These can be constructed as follows. Fix a geodesic ray ${\tau}(t)$ in $\H^d(\kk)$, and a point $p$ on ${\tau}$. For fixed $t $, consider the hyperbolic sphere $S_t$ centred at ${\tau}(t)$ and passing through $p$. As $t \to \infty$ the spheres $S_t$ converge to an unbounded hypersurface, called the horosphere passing through $p$ around the ideal point ${\tau}({\infty})$, see Figure \ref{fig:limit_circles}. For concreteness, in the upper half-space model of hyperbolic space, horospheres are realised as Euclidean spheres tangent to the boundary, or as Euclidean hyperplanes parallel to the boundary, see Figure \ref{fig:model_H2} for the case $d=2$ (where horospheres are traditionally called \emph{horocycles}).

\begin{figure}[t]
	\centering
	\begin{tikzpicture}
	\draw[->] (-5,0) -- (5,0);
	\draw[->] (0,0) -- (0,3);
	\draw[green,thick,domain=0:360,smooth] plot ({0.8*cos(\x)+1.25}, {0.8*sin(\x)+1.5});
	\draw[green,thick,domain=0:360,smooth] plot ({cos(\x)+1.25}, {sin(\x)+1.7});
	\draw[green,thick,domain=180:360,smooth] plot ({1.5*cos(\x)+1.25}, {1.5*sin(\x)+2.2});
	\draw[green,thick,domain=160:180,smooth,dashed] plot ({1.5*cos(\x)+1.25}, {1.5*sin(\x)+2.2});
	\draw[green,thick,domain=0:20,smooth,dashed] plot ({1.5*cos(\x)+1.25}, {1.5*sin(\x)+2.2});
	\draw[green,thick,domain=180+40:360-40,smooth] plot ({3*cos(\x)+1.25}, {3*sin(\x)+3.7});
	\draw[green,thick,domain=180+20:180+40,smooth,dashed] plot ({3*cos(\x)+1.25}, {3*sin(\x)+3.7});
	\draw[green,thick,domain=360-40:360-20,smooth,dashed] plot ({3*cos(\x)+1.25}, {3*sin(\x)+3.7});
	\draw[green,very thick,dashed] (-4.5,0.72) -- (4.5,0.72);
	\filldraw[black] (1.25,0.7) circle (1.5pt);
	\end{tikzpicture}
	\caption{Construction of a horocycle.}\label{fig:limit_circles}
\end{figure}
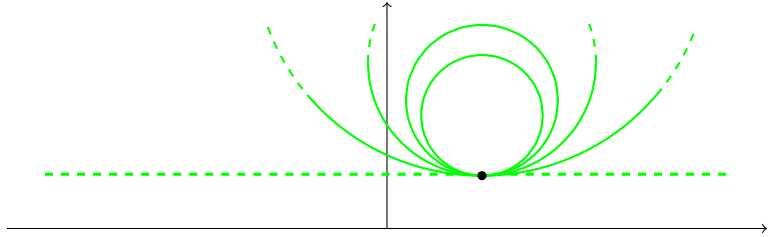

	Slightly more formally, horospheres may be alternatively defined in terms of \emph{Busemann functions}, see e.g. \cite{Eberlein}. Given a geodesic ray parametrised by arclength ${\tau}(s)$ in $\H^d(\kk)$, its associated Busemann function $B_{{\tau}} : \H^d(\kk) \to \R$ is defined by
	\begin{equation}
	B_{{\tau}}(p) := \lim_{s \to \infty} \left(  s-d_\kk(p, {\tau}(s))\right).
	\end{equation}
	The level sets of $B_{{\tau}}$ are exactly the horospheres around the ideal point ${\tau}({\infty})$ (observe that the level sets of the functions $p \mapsto   s-d_\kk(p, {\tau}(s))$ are hyperbolic spheres). For example, in the upper half-space model for $\H^d(1)$, the Busemann function associated with the geodesic ray ${\tau}(s) = (0, e^s)$, $s \ge 0$, is given by $B_{{\tau}}(x_1,\ldots,x_d) = \log x_d$, and hence horocycles  around the point at infinity ${\tau}({\infty})$ are Euclidean hyperplanes parallel to the boundary.
	
	Any horosphere $H$ is the boundary of two unbounded domains, only one of which is hyperbolically convex. We call this the \emph{horoball} bounded by $H$. It can be seen as the limit of the balls bounded by the spheres converging to $H$, or, which is the same, the appropriate sup-level set of the corresponding Busemann function. In the upper half-plane model, a horoball is realised as the Euclidean ball bounded by a Euclidean sphere tangent to the boundary, or as the  Euclidean half-space lying above a Euclidean hyperplane parallel to the boundary.

 \subsection{Integration formulas in hyperbolic space}

 In the sequel we will collect several integration formulas in $\H^d(\kk)$, which may be viewed as disintegration formulas for hyperbolic volume along spheres, hyperplanes, or horospheres. The first is  the  standard formula for polar integration in $\H^d(\kk)$.  Recall that $o \in \H^d(\kk)$ is a fixed origin and associate to each point $x \in \H^d(\kk) \setminus \{o\}$ the pair $(s,u) \in (0,\infty) \times S^{d-1}$ such that $s = d_\kk(o,x)$ and $u$ is the unit vector tangent to the (unique) geodesic starting at $o$ and passing through $x$. With this notation, we have for every measurable $f \colon \H^d(\kk) \to [0,\infty)$ the formula
\begin{equation}\label{eq:polar_integration}
\int_{\H^d(\kk)} f(x) \,\cH_\kk^d(\dint x) = \kk^{-{d-1 \over 2}}\int_0^\infty \int_{S^{d-1}} f(s,u) \sinh^{d-1}(\sqrt{\kk}s) \,\dint u\,\dint s, 
\end{equation} 
where we denote by $\dint u$ the volume element of the standard rotationally-invariant volume form on the unit sphere $S^{d-1}$ in the tangent space of $\H^d(\kk)$ at $o$. In particular, we get the following formula for the volume of a hyperbolic ball:
\begin{equation}\label{eq:volume_ball}
\begin{split}
 V_\kk(R) = \cH_\kk^d(B^d(x,R)) &= \omega_d\kk^{-{d-1\over 2}}\int_0^R\sinh^{d-1}(\sqrt{\kk}\,u)\,\dint u \\
& =\omega_{d}  \kk^{-{d\over 2}}\int_0^{\sqrt{\kk}R} \sinh^{d-1}(s) \,\dint s.
\end{split}
\end{equation}
We note the relation
\begin{equation}\label{eq:Vk-rescale}
V_\kk(R) = \kk^{-d/2} V_1(\sqrt{\kk} R).
\end{equation}

The next formula is a disintegration of the hyperbolic volume along hyperplanes.
	
	\begin{lemma}\label{lem:disintegration_planes}
	Let $L \subset \H^d(\kk)$ be a complete geodesic with an arclength parametrisation ${\tau}(t)$. Denote by $H_t$, $t \in \R$, the hyperbolic hyperplane meeting $L$ at ${\tau}(t)$ orthogonally. Then for any measurable function $f : \H^d(\kk) \to [0,\infty)$ one has that
	\begin{equation*}
		\int_{\H^d(\kk)} f(x) \,\cH_\kk^d(\dint x) = \int_{\R} \int_{H_t} f(x) \cosh \big(\sqrt{\kk}\,d_\kk(x,L)\big) \,\cH_\kk^{d-1}(\dint x) \,\dint t,
	\end{equation*}
	in which $d_\kk(x,L):=\min_{y\in L}d_\kk(x,y)$ is the hyperbolic distance between $x$ and $L$.
	\end{lemma}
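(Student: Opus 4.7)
First I reduce to the case $\kk = 1$. Using the scaling $g_\kk = \kk^{-1}g_1$, distances rescale as $d_\kk = \kk^{-1/2}d_1$ and hyperbolic volumes rescale by factors $\kk^{-d/2}$ and $\kk^{-(d-1)/2}$. If $L_1$ denotes the image of $L$ under the identification of $\H^d(\kk)$ with $\H^d(1)$, both sides of the claimed identity rescale by the same factor $\kk^{-d/2}$, so it suffices to establish the formula in $\H^d = \H^d(1)$. By the homogeneity of hyperbolic space, it also suffices to verify the identity for one particular choice of geodesic $L$.

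The plan is to realise the geodesic $L$ as the vertical axis in the upper half-space model: set $L = \{(0,\ldots,0,x_d): x_d > 0\}$ with the arclength parametrisation $\tau(t) = (0,\ldots,0,e^t)$. In this model the hyperbolic hyperplane orthogonal to $L$ at $\tau(t)$ is realised as the Euclidean hemisphere $H_t = \{x \in \R^d : |x|_E = e^t,\ x_d > 0\}$. Writing a point of the half-space in Euclidean polar coordinates $x = r\omega$ with $r > 0$ and $\omega \in S^{d-1}_+$ (the upper hemisphere), and then substituting $r = e^t$, a routine computation of Jacobians gives the volume element
\begin{equation*}
\frac{\dint x_1\cdots \dint x_d}{x_d^d} \;=\; \frac{\dint\omega\,\dint t}{\omega_d^d},
\end{equation*}
and, analogously, the induced hyperbolic surface measure on $H_t$ parametrised by $\omega \mapsto e^t\omega$ equals $\dint\omega/\omega_d^{d-1}$.

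It remains to identify the weight $\cosh(d_1(\cdot,L))$. Since $H_t$ meets $L$ orthogonally at $\tau(t)$, the nearest point of $L$ to any $x \in H_t$ is $\tau(t)$, so $d_1(x,L) = d_1(x,\tau(t))$. Plugging $x = e^t\omega$ and $\tau(t) = e^t e_d$ into the closed-form distance, I get $|x-\tau(t)|_E^2 = 2e^{2t}(1-\omega_d)$ and $x_d\tau(t)_d = e^{2t}\omega_d$, which yields $d_1(x,L) = \arcosh(1/\omega_d)$ and hence $\cosh(d_1(x,L)) = 1/\omega_d$. Multiplying the surface measure on $H_t$ by this weight produces $\dint\omega/\omega_d^d$, and then integrating against $\dint t$ reproduces exactly the hyperbolic volume element displayed above. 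This proves the formula for $\kk = 1$ with $L$ vertical; homogeneity and rescaling finish the proof.

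I do not expect any serious obstacle: the argument is bookkeeping for a change of coordinates. The one step worth doing carefully is the identification of the orthogonal hyperplane $H_t$ with the Euclidean hemisphere of radius $e^t$ centred at the origin, which is where the factor $\cosh(d_1(x,L)) = 1/\omega_d$ ultimately comes from. A more coordinate-free alternative is to introduce Fermi coordinates around $L$, in which the hyperbolic metric takes the form $\cosh^2(s)\,\dint t^2 + \dint s^2 + \sinh^2(s)\,g_{S^{d-2}}$ with $s = d_\kk(\cdot,L)$; this exhibits the weight $\cosh(s)$ directly as the Jacobi-field factor, but the upper half-space computation above is shorter for the present purpose.
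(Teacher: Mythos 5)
Your proof is correct and follows essentially the same path as the paper's: both work in the upper half-space model with $L$ the vertical axis, identify $H_t$ with the Euclidean hemisphere of radius $e^{\sqrt{\kk}t}$, and extract the weight $\cosh\big(\sqrt{\kk}\,d_\kk(\cdot,L)\big)$ from the closed-form half-space distance formula. The only packaging difference is that the paper wraps the Jacobian bookkeeping in the coarea formula, computing $\|\nabla(\tau^{-1}\circ\Pi)\|_{g_\kk}=1/\cosh\big(\sqrt{\kk}\,d_\kk(x,L)\big)$, instead of passing to polar coordinates directly, and it keeps $\kk$ throughout rather than pre-reducing to $\kk=1$.
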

	
\begin{proof}
Let $\Pi:\H^d(\kk) \to L$ be the orthogonal projection onto $L$. By definition, the projection $\Pi(x) \in L$ of a point $x \in \H^d(\kk)$ is the endpoint in $L$ of the (unique) geodesic segment starting from $x$ and hitting $L$ orthogonally. Define $F : \H^d(\kk) \to \R$ by $F := {\tau}^{-1} \circ \Pi$. That is, for $x \in \H^d(\kk)$ one has $F(x)=t$ precisely when $\Pi(x) = {\tau}(t) $. Moreover, observe that the fibres of $\Pi$ are hyperplanes orthogonal to $L$, and so $F^{-1}(t) =H_t$. Therefore the coarea formula \cite[Theorem VIII.3.3]{Chavel} gives 
\begin{equation*}
\int_{\H^d(\kk)} f(x) \,\cH_\kk^d(\dint x) = \int_\R \int_{H_t} {f(x) \over \|\nabla F(x) \|_{g_\kk}} \,\cH_\kk^{d-1}(\dint x)\, \dint t,
\end{equation*}
where $\|\cdot \|_{g_\kk}$ denotes the norm associated with the Riemannian metric $g_\kk$.
Hence to prove the claim it remains to show that $\|\nabla F (x) \|_{g_\kk} = 1/\cosh\big(\sqrt{\kk}\,d_\kk(x,L)\big)$. This is readily seen in the upper half-space model as follows. We may assume that $L$ is the geodesic along the $x_d-$axis. Its arclength parametrisation is
\begin{equation*}
{\tau}(t) = (0,\ldots, 0,e^{\sqrt{\kk}t}).
\end{equation*}
Moreover, the geodesic from a point $x \in \H^d(\kk)$ orthogonal to $L$ is then a Euclidean circular arc with centre at $(0,\ldots, 0)$ and radius $|x|_{\rm E}$, where we recall that $|\,\cdot\,|_{\rm E}$ denotes the Euclidean norm. In particular,
\begin{equation*}
\Pi(x) = (0,\ldots, 0, |x|_{\rm E}),
\end{equation*}
see Figure \ref{fig:Pi}.
	\begin{figure}
	\centering
	\begin{tikzpicture}[scale=1.2]
	\draw[thick] (-3,0) -- (3,0);
	\draw[thick, -] (0,0) -- (0,3) node[above]{$\footnotesize L$};
	\draw[thick,domain=0:90,smooth,dashed] plot ({2 * cos(\x)}, {2 * sin(\x)});
	\draw[domain=45:90,smooth] plot ({0.5 * cos(\x)}, {0.5 * sin(\x)});
	\node at (0.7*0.3826, 0.7*0.9238) {$\footnotesize\phi$};
	\draw[] (0,0)--(1.414,1.414) node[above right]{$\footnotesize x$};
	\filldraw[thick,color=black] (1.414,1.414) circle (1pt);
	\filldraw[thick,color=black] (0,2) circle (1pt) node[left]{$\footnotesize\Pi(x)$};
	\end{tikzpicture}
	\caption{The orthogonal projection onto $L$.}\label{fig:Pi}
\end{figure}
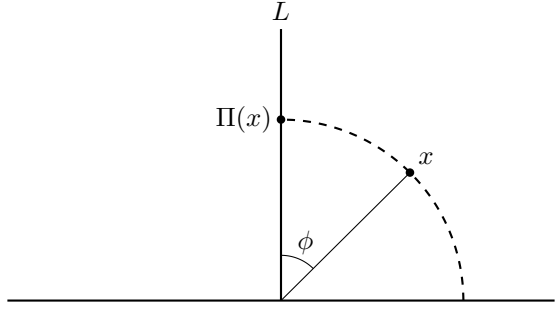
Therefore,
\begin{equation*}
F(x) = {1 \over \sqrt{\kk}} \,\log |x|_{\rm E} .
\end{equation*}
Denoting for $i,j\in\{1,\ldots,d\}$ by $(g_\kk)^{ij}(x) = \kk x_d^2 \,\delta^{ij}$ the entries of the inverse of the metric tensor, we find that 
\begin{align*}
\|\nabla F\|_{g_\kk}^2 = \sum_{i,j=1}^d (g_\kk)^{ij} {\partial F \over \partial x_i}{\partial F \over \partial x_j} = \kk x_d^2 \sum_{j=1}^d\left( {\partial F \over \partial x_j}\right)^2 = \left({x_d \over |x|_{\rm E}}\right)^2 = \cos^2\phi,
\end{align*}
where $\phi$ is the angle from the $x_d$-axis to the straight line passing through $(0,\dots,0)$ and $x$, see Figure \ref{fig:Pi}. On the other hand, one computes easily (see e.g. \cite[Figure 24]{CFKP}) that 
$$d_\kk(x, L) = d_\kk(x, \Pi(x)) = {1 \over \sqrt{\kk}}\,\arcosh\left({1 \over \cos \phi}\right).$$
Combining these computations, we deduce that
\begin{equation*}
\|\nabla F(x)\|_{g_\kk} = \cos \phi = {1\over \cosh\big(\sqrt{\kk}\,d_\kk(x, L)\big)},
\end{equation*}
as desired. This completes the proof.
\end{proof}

Finally, we will also employ in this paper the  following disintegration of the hyperbolic volume with respect to horospheres. We will call horospheres centred at the same ideal point \emph{parallel}. The family of all parallel horospheres corresponding to some ideal point covers the entire hyperbolic space, and it is convenient to parametrise the family in terms of the signed distance from a given horosphere $H$ to the origin $o \in \H^d(\kk)$, where we take the distance to be \emph{negative} if $o$ lies inside the horoball bounded by $H$,	and positive otherwise.

	\begin{lemma}\label{lem:disintegration_horo}
	Let $(H_t)_{t \in \R}$ be a family of parallel horospheres in $\H^d(\kk)$, parametrised such that $H_t$ has signed distance $t$ from the origin $o \in \H^d(\kk)$. Then for any measurable function $f : \H^d(\kk) \to [0,\infty)$ one has
	\begin{equation*}
	\int_{\H^d(\kk)} f(x) \,\cH_\kk^d(\dint x) = \int_{\R} \int_{H_t} f(x) \,\cH_\kk^{d-1}(\dint x) \,\dint t.
	\end{equation*}
		\end{lemma}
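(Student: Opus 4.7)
The plan is to mirror the strategy of the proof of Lemma \ref{lem:disintegration_planes}: apply the coarea formula to a suitable function $F:\H^d(\kk)\to\R$ whose level sets are exactly the horospheres $H_t$. The essential point, and the reason no Jacobian factor appears here, is that the natural $F$ turns out to have gradient of unit length everywhere, reflecting the familiar fact that parallel horospheres are equidistant surfaces.

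Since $\cH_\kk^d$ and $\cH_\kk^{d-1}$ are invariant under the full isometry group of $\H^d(\kk)$, and this group acts transitively on pairs consisting of a point and an ideal point, it suffices to verify the identity after choosing a convenient configuration. Work in the upper half-space model and take the common ideal point of the family $(H_t)$ to be $\infty$ and the origin to be $o = (0,\ldots,0,1)$. The horospheres around $\infty$ are then the Euclidean hyperplanes $\{x_d = c\}$ with $c > 0$, and the horoball bounded by such a hyperplane is $\{x_d > c\}$. Using $d_\kk(x,y) = (1/\sqrt{\kk})\arcosh(1 + |x-y|_{\rm E}^2/(2 x_d y_d))$ and the elementary identity $\arcosh((a + 1/a)/2) = |\log a|$ one computes $d_\kk(o,\{x_d=c\}) = |\log c|/\sqrt{\kk}$, with $o$ lying inside the horoball iff $c < 1$. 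The parametrisation by signed distance from $o$ therefore reads $H_t = \{x_d = e^{\sqrt{\kk}\,t}\}$, $t \in \R$.

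Now define $F:\H^d(\kk)\to\R$ by $F(x) := (\log x_d)/\sqrt{\kk}$, so that $F^{-1}(t) = H_t$. Using the entries $(g_\kk)^{ij}(x) = \kk x_d^2\,\delta^{ij}$ of the inverse metric tensor, exactly as in the proof of Lemma \ref{lem:disintegration_planes}, one obtains
$$\|\nabla F(x)\|_{g_\kk}^2 \;=\; \kk x_d^2 \sum_{j=1}^d\left(\frac{\partial F}{\partial x_j}\right)^2 \;=\; \kk x_d^2 \cdot \frac{1}{\kk x_d^2} \;=\; 1.$$
The coarea formula \cite[Theorem VIII.3.3]{Chavel} then immediately yields
$$\int_{\H^d(\kk)} f(x)\,\cH_\kk^d(\dint x) \;=\; \int_\R \int_{H_t} \frac{f(x)}{\|\nabla F(x)\|_{g_\kk}}\, \cH_\kk^{d-1}(\dint x)\,\dint t \;=\; \int_\R\int_{H_t} f(x)\,\cH_\kk^{d-1}(\dint x)\,\dint t,$$
which is the claim.

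There is no genuinely hard step: equivalently one could phrase the argument intrinsically using the Busemann function $B_\tau$ of the geodesic ray $\tau(s) = (0,\ldots,0,e^{\sqrt{\kk}s})$ tending to the chosen ideal point, observing that $B_\tau = \sqrt{\kk}\, F$ up to the additive constant needed to make $B_\tau(o)=0$, and recalling that Busemann functions on $\H^d(\kk)$ are known to have unit Riemannian gradient. The only minor care point is keeping the sign convention for the signed distance consistent with the chosen side of the horoball, which is settled by the computation above.
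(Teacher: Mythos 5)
Your proof is correct and follows essentially the same approach as the paper's: both apply the coarea formula to a function whose level sets are the parallel horospheres and observe that the relevant gradient has unit Riemannian length, so that no Jacobian factor survives. The only cosmetic difference is that you compute $\|\nabla F\|_{g_\kk}=1$ and $F^{-1}(t)=H_t$ by hand in the upper half-space model, whereas the paper works intrinsically with the Busemann function $B_\tau$ and cites Eberlein for $\|\nabla B_\tau\|_{g_\kk}\equiv 1$ --- as you note yourself, the two formulations coincide.
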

	
	\begin{proof}
	Let ${\tau}$ be the complete geodesic corresponding to the parallel family, such that ${\tau}(0) \in H_0$, and let $B_{{\tau}}$ be the associated Busemann function. By the coarea formula \cite[Theorem VIII.3.3]{Chavel} we have
	\begin{equation}\label{eq:coarea}
		\int_{\H^d(\kk)} f(x) \,\cH_\kk^d(\dint x) = \int_{\R} \int_{B_\tau^{-1}(t)} {f(x) \over \|\nabla B_{{\tau}} (x) \|_{g_\kk}} \,\cH_\kk^{d-1}(\dint x) \,\dint t,
	\end{equation}
	where we recall that $\|\cdot \|_{g_\kk}$ denotes the norm associated with the Riemannian metric $g_\kk$.

	It is known from \cite[Proposition 1.10.2.(2)]{Eberlein} that $\|\nabla B_{{\tau}}\|_{g_\kk} \equiv 1$. Thus it remains to verify that $B_{{\tau}}^{-1}(t)=H_t$. Indeed, since ${\tau}$ is orthogonal to all horospheres centred at ${\tau}({\infty})$, one has that
 	\begin{equation*}
 	d_\kk(B_{{\tau}}^{-1}(t), B_{{\tau}}^{-1}(0))=d_\kk({\tau}(t),{\tau}(0))=|t|.
 	\end{equation*}
 	 Finally, as the horoball bounded by $B_{{\tau}}^{-1}(t)$ is $B_{{\tau}}^{-1}([t,\infty))$, it contains $o$ if and only if $t \leq  0$, we conclude that $B_{{\tau}}^{-1}(t) = H_t$. Plugging all this back into \eqref{eq:coarea} completes the proof.
	\end{proof}

\section{Background material on {Poisson processes}}\label{sec:PPP}

Let $\XX$ be a Polish space with Borel $\sigma$-field $\cB(\XX)$, which is supplied with a {locally} finite measure $\mu$ having no atoms. By $\mathcal{F}_{\rm lf}(\XX)$ we denote the space of locally finite subsets of $\XX$. By a Poisson process $\eta$ on $\XX$ with intensity measure $\mu$ we understand a random element in $\mathcal{F}_{\rm lf}(\XX)$, defined on some auxiliary probability space $(\Omega,\mathcal{A},\PP)$ with the following two properties:
\begin{itemize}
\item[(i)] For any $B\in\cB(\XX)$ the random variable {$|\eta\cap B|$, which is the cardinality of the set $\eta\cap B$,} has a Poisson distribution with mean $\mu(B)$;
\item[(ii)] for any $k\geq 2$ and any $k$ disjoint subsets $B_1,\ldots,B_k\in\cB(\XX)$ the random variables {$|\eta\cap B_1|,\ldots,|\eta\cap B_k|$} are independent.
\end{itemize}
It is convenient to identify the locally finite random set $\eta$ with the random counting measure $\sum_{x\in\eta}\delta_x$, where $\delta_x$ is the Dirac measure concentrated at $x$. In other words, a Poisson process can be regarded as a random element in the space $\mathcal{N}_{\rm lfc}(\XX)$ of locally finite counting measures on $\XX$. We keep the notation $\eta$ for this random measure and write $\eta(B)$ instead of {$|\eta\cap B|$} in what follows. As usual in point process theory, we switch freely between both descriptions of $\eta$. Moreover, we refer to \cite{LP} for further background material on general Poisson processes.

One of the main tools in the analysis of Poisson processes is the so-called Mecke equation \cite[Theorem 4.1]{LP}. It says that for any non-negative measurable function $f:\XX\times\mathcal{N}_{\rm lfc}(\XX)\to\R$	one has that
\begin{equation}\label{eq:Mecke}
\E\bigg[\sum_{x\in\eta}f(x,\eta)\bigg] = \int_\XX\E {[}f(x,\eta+\delta_x){]}\,\mu(\dint x).
\end{equation}
An iteration of the last identity shows that if $k\geq 2$ and if $f:\XX^k\times\mathcal{N}_{\rm lfc}(\XX)\to\R$ is a non-negative measurable function, then
\begin{align}\label{eq:Meckemulti}
\notag &\E\bigg[\sum_{(x_1,\ldots,x_k)\in\eta_{\neq}^k} f(x_1,\ldots,x_k,\eta)\bigg] \\
&= \int_{\XX}\cdots\int_{\XX}\E{[} f(x_1,\ldots,x_k,\eta+\delta_{x_1}+\ldots+\delta_{x_k}){]}\,\mu(\dint x_1)\ldots\mu(\dint x_k),
\end{align}
where $\eta_{\neq}^k$ is the set of $k$-tuples of distinct points of $\eta$, see \cite[Theorem 4.4]{LP}.

Another tool we shall use is the so-called Poincar\'e inequality for Poisson functionals \cite[Theorem 18.7]{LP}. By the latter we mean random variables of the form $F=F(\eta)$, that is, random variables that can be described only in terms of the {Poisson process} $\eta$. Assuming that $\E {[}F^2{]}<\infty$ one has that
\begin{equation}\label{eq:Poincare}
\Var [F] \leq \int_\XX \E[(D_xF)^2]\,\mu(\dint x),
\end{equation}
where $D_xF:=F(\eta+\delta_x)-F(\eta)$ is the first-order difference operator with respect to $x$ and applied to $F$. 
With the additional condition that we can find some constant ${a} \geq 0$ for which
\begin{equation*}
	\int_{\XX}\int_{\XX} \E\left[ (D^2_{x_1, x_2} F)^2\right] \, \mu(\dint x_1) \,\mu(\dint x_2) \leq  {a}  \int_{\XX} \E [\left(D_x F\right)^2]\,\mu(\dint x) < {\infty} 
\end{equation*}
holds, a lower bound for the variance is by \cite[Theorem 1.1]{STr} given by
\begin{align}\label{eq:reversePoincare}
	\Var[F] \geq \frac{4}{({a} + 2)^2} \int_{\XX} \E [\left(D_x F\right)^2]\,\mu (\dint x).
\end{align}
Here,
\begin{align*}
	D^2_{x_1, x_2} F:=D_{x_1}(D_{x_2}F)=F(\eta+\delta_{x_1}+\delta_{x_2})-F(\eta+\delta_{x_1})-F(\eta+\delta_{x_2})+F(\eta)
\end{align*}
  is the iterated, second-order difference operator for $x_1,x_2\in\XX$.

To show the central limit theorem for edge-length functionals we also use the following theorem from \cite[Theorem 6.1]{LPS}, which yields quantitative bounds for normal approximation.

\begin{theorem}\label{thm:normalapproximation_bounds_dW_dK}
	Let $F$ be a Poisson functional satisfying  $\int_\XX \E[\left(D_x F\right)^2]\,\mu (\dint x)<\infty$, $\Var[F]>0$ {and} the moment conditions
	\begin{align*}
		\E [\lvert D_xF\rvert^{5}]&\leq \bar{c} \quad\text{for}\quad \mu\text{-a.e.}\;  x\in\mathbb{X},\\
		\E [\lvert D^2_{x_1,x_2}F\rvert^{5}]&\leq \bar{c}\quad\text{for}\quad \mu^2\text{-a.e.}\;  (x_1,x_2)\in\mathbb{X}^2
	\end{align*}
	for some constant $\bar{c}\geq 1$. Denote by $N$ a standard Gaussian random variable. Then 
	\begin{align*}
		{d_\diamondsuit}\Bigg(\frac{F-\E[F]}{\sqrt{\Var[F]}},N\Bigg)\leq&
		\frac{5\bar{c}}{\Var[F]}\Bigg(\int_\XX\Bigg(\int_\XX\p(D^2_{x_1,x_2}F\neq 0)^{1/20}\mu(\dint x_2)\Bigg)^2\mu(\dint x_1)\Bigg)^{1/2}\\
		&+\frac{\bar{c}I_F^{1/2}}{\Var[F]}+\frac{2\bar{c}I_F}{\Var[F]^{3/2}}+\frac{\bar{c}I_F^{5/4}+2\bar{c}I_F^{3/2}}{\Var[F]^2}\\
		&+\frac{\sqrt{6}\bar{c}+\sqrt{3}\bar{c}}{\Var[F]}\Big(\int_\XX\p(D_{x_1,x_2}^2F\neq 0)^{1/10}\mu^2(\dint (x_1,x_2))\Big)^{1/2}
	\end{align*}
for $\diamondsuit\in\{W,K\}$,	where 
	\begin{align*}
		I_F:=\int_\XX\p(D_xF\neq 0)^{1/10}\mu(\dint x).
	\end{align*}
\end{theorem}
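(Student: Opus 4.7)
\medskip

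\noindent\textbf{Plan.} This theorem is a general purpose quantitative CLT for Poisson functionals obtained via the Malliavin--Stein method, so I would prove it by combining three ingredients: (i) the integration-by-parts formula on Poisson space; (ii) Stein's equation for the Wasserstein and Kolmogorov metrics; and (iii) Hölder's inequality applied to the support of the difference operators. My starting point would be the identity
\[
\Cov(F,\phi(F)) = \E\bigl[\langle DF,\, -DL^{-1}\phi(F)\rangle_{L^2(\mu)}\bigr],
\]
valid for sufficiently regular $\phi$, where $D$ is the add-one cost operator (coinciding with $D_x$ in the statement) and $L$ is the Ornstein--Uhlenbeck generator on Fock space. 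Combined with a solution $f_h$ of Stein's equation $f'(x) - x f(x) = h(x) - \E[h(N)]$ for $h$ a 1-Lipschitz test function (or an indicator in the Kolmogorov case), this reduces bounding $d_\diamondsuit(F^\ast, N)$ to controlling $|\,\E[\langle DF,-DL^{-1}F\rangle/\Var[F]\,] - 1|$ plus a remainder involving $\|Df_h(F)\|$.

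\medskip

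\noindent The heart of the argument is to use the product-type decomposition $D_x \phi(F) = \phi(F+D_xF)-\phi(F)$ together with a first-order Taylor expansion in order to rewrite the above remainders in terms of the first- and second-order difference operators $D_xF$ and $D^2_{x_1,x_2}F$. After applying the Mecke identities \eqref{eq:Mecke} and \eqref{eq:Meckemulti} and Cauchy--Schwarz, one arrives at abstract bounds of the shape
\[
d_\diamondsuit(F^\ast,N) \leq C\bigl(\Gamma_1 + \Gamma_2 + \Gamma_3\bigr)/\Var[F] + \text{(lower-order terms in $\Var[F]^{-3/2}$, $\Var[F]^{-2}$)},
\]
where $\Gamma_1$ is a double integral of $\E[(D^2_{x_1,x_3}F)^2(D^2_{x_2,x_3}F)^2]^{1/2}$, $\Gamma_2$ is a double integral of $\E[(D^2_{x_1,x_2}F)^4]^{1/2}$, and $\Gamma_3$ contains third moments of $D_xF$. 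These bounds are classical in the shape of \cite{LPS} but a priori involve \emph{moments} of the difference operators rather than hitting probabilities.

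\medskip

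\noindent The second, more delicate step converts moment integrals into the probability integrals $I_F$ appearing in the statement. The key observation is that $D_x F$ vanishes identically off the event $\{D_x F \ne 0\}$, so for any $0 \le p \le 5$ Hölder's inequality gives
\[
\E[|D_xF|^p] = \E\bigl[|D_xF|^p \id\{D_xF \ne 0\}\bigr] \leq \E[|D_xF|^5]^{p/5}\,\PP(D_xF \ne 0)^{1-p/5} \leq \bar c\,\PP(D_xF \ne 0)^{(5-p)/5},
\]
and analogously for $D^2_{x_1,x_2}F$. Choosing the exponent $p = 9/2$ produces the power $1/10$ appearing in $I_F$, while repeated application of Cauchy--Schwarz to the product $\E[(D^2_{x_1,x_3}F)^2(D^2_{x_2,x_3}F)^2]^{1/2}$ yields the exponent $1/20$ in the very first double integral. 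Tracking these exponents through $\Gamma_1, \Gamma_2, \Gamma_3$ and absorbing all powers of $\bar c$ reproduces the stated bound.

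\medskip

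\noindent\textbf{Main obstacle.} The main difficulty is obtaining a \emph{single} estimate covering both $d_W$ and $d_K$: for the Wasserstein case one can work with Lipschitz Stein solutions with bounded derivatives, but for the Kolmogorov case the Stein solution is only bounded, so a smoothing/truncation argument and a sharper control of the third absolute moment of $F^\ast$ are required. This is precisely what produces the less transparent summands $\bar c I_F^{5/4}/\Var[F]^2$ and $\bar c I_F^{3/2}/\Var[F]^2$ in the bound, and it is the step where the uniformity of the constant $C$ in $\diamondsuit \in \{W,K\}$ costs the most work.
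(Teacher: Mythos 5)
The paper itself does not prove this theorem; it is quoted verbatim from \cite[Theorem~6.1]{LPS} (Last--Peccati--Schulte, 2016), so there is no in-paper argument against which to compare your proposal. Your plan does match the strategy used in that reference: a Malliavin--Stein integration-by-parts bound, reduction to second-order Poincar\'e-type quantities involving $D_xF$ and $D^2_{x_1,x_2}F$, and a final H\"older step converting moment integrals into integrals of non-vanishing probabilities --- and your exponent bookkeeping, taking $p=9/2$ so that $(5-p)/5 = 1/10$ and using a further Cauchy--Schwarz to reach $1/20$, is the right kind of arithmetic.

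However, as written the proposal is a plan rather than a derivation and skips the technically hard parts. The passage from the covariance identity $\Cov(F,\phi(F)) = \E\langle DF, -DL^{-1}\phi(F)\rangle$ to the $\gamma$-type bounds requires Mehler's formula to represent $-D_xL^{-1}F$ through the Poisson analogue of the Ornstein--Uhlenbeck semigroup, together with a carefully staged sequence of Cauchy--Schwarz estimates, not just a single Taylor expansion as you suggest. Your description of $\Gamma_1$ as a double integral of $\E[(D^2_{x_1,x_3}F)^2(D^2_{x_2,x_3}F)^2]^{1/2}$ is also off: the dominant contributions in the second-order Poincar\'e inequality are triple integrals mixing first- and second-order difference operators, of the form $\E[(D_{x_1}F)^2(D_{x_2}F)^2]^{1/2}\,\E[(D^2_{x_1,x_3}F)^2(D^2_{x_2,x_3}F)^2]^{1/2}$. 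Finally, for the Kolmogorov distance, the summands $\bar{c}\,I_F^{5/4}/\Var[F]^2$ and $\bar{c}\,I_F^{3/2}/\Var[F]^2$ do not come from a generic smoothing or truncation of the indicator test function; they arise from specific auxiliary quantities (the $\gamma_4,\gamma_5,\gamma_6$ of \cite{LPS}) that control fluctuations of the integrated squared gradient, and recovering the precise constants $5\bar c$, $\sqrt{6}\bar c+\sqrt{3}\bar c$, etc.\ would require carrying that computation all the way through. In short: you identify the right route, which is indeed the route taken in the cited source, but a blind derivation of the exact inequality stated would still need the full second-order Poincar\'e machinery filled in.
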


	\section{Degree of the origin: proof of Theorem \ref{thm:DegreeOrigin}}\label{sec:proofs_origin}
	
	In this section we analyse the degree of the origin in the radial spanning tree $\RST(\eta)$.  We start with the exact formula for the expected degree.
	
	\begin{proof}[Proof of Theorem \ref{thm:DegreeOrigin} (b)]
		By definition of the radial nearest neighbour and the Mecke equation \eqref{eq:Mecke} for {Poisson processes} we have that
		\begin{align*}
		\E[\deg(o)] &= \E\Big[\sum_{x\in\eta}\mathbbm{1}\{\eta(B^d(x,d_\kk(o,x))\cap B^d(o,d_\kk(o,x)))=0\}\Big]\\
		&=\gamma\int_{\H^d(\kk)} \mathbb{P}((\eta+\delta_x)(B^d(x,d_\kk(o,x))\cap B^d(o,d_\kk(o,x)))=0)\,\cH_\kk^d(\dint x)\\
		&=\gamma\int_{\H^d(\kk)}\exp(-\gamma\cH_\kk^d(B^d(x,d_\kk(o,x))\cap B^d(o,d_\kk(o,x))))\,\cH_\kk^d(\dint x).
		\end{align*}
		Next, we {apply} hyperbolic spherical coordinates as in \eqref{eq:polar_integration} to see that 
		\begin{align}\label{eq:130523A}
		\E{[}\deg(o){]} &= \gamma\kk^{-{d-1 \over 2}}\omega_d \int_0^\infty \exp(-\gamma\cH_\kk^d(B^d(x_r,r)\cap B^d(o,r)))\,\sinh^{d-1}(\sqrt{\kk}r)\,\dint r,
		\end{align}
		where $x_r\in\H^d(\kk)$ is an arbitrary point satisfying $d_\kk(o,x_r)=r$.
		
		\begin{figure}[t]
			\centering
			\begin{tikzpicture}[scale=1.5]
			\begin{scope}
			\clip(-0.5,-1) rectangle (0.5,1);
			\filldraw[thick,color=blue!50] (1,0) circle (1.0);
			\end{scope}
			\draw[thick] (1,0) circle (1.0);
			\draw[thick] (0,0) circle (1.0);
			\draw[thick, dotted] (0.5,0.866)--(0.5,-0.866);
			\draw[thick,<->] (0,-1.2) -- node [below] {\footnotesize $r/2$}  (0.5,-1.2);
			\filldraw[black] (0,0) circle (1pt);
			\filldraw[black] (1,0) circle (1pt);
			\end{tikzpicture}
			\qquad 
			\begin{tikzpicture}[scale=1.5]
			\begin{scope}
			\clip(-1,0.5) rectangle (1,1);
			\filldraw[color=blue!50](0,0) circle (1.0);
			\end{scope}
			\filldraw[black] (0,0) circle (1pt);
			\draw[thick](0,0) circle (1.0);
			\draw[thick,dotted](-0.866,0.5)--(0.866,0.5);
			\draw[thick](0,-1.2)--(0,1.2)node[above]{\footnotesize \blue{$L$}};
			\draw[thick](-1.2,0.7)--(1.2,0.7) node[right]{\footnotesize$H_t$};
			\draw[](0,0)--node[below right]{\footnotesize$r$}(0.714,0.7);
			\draw[<->](-.1,0)--node[left]{\footnotesize$r-t$}(-.1,0.69) ;
			\end{tikzpicture}
			\caption{Left: intersection of two balls. Right: computing the volume of a cap.}\label{fig:caps}
		\end{figure}
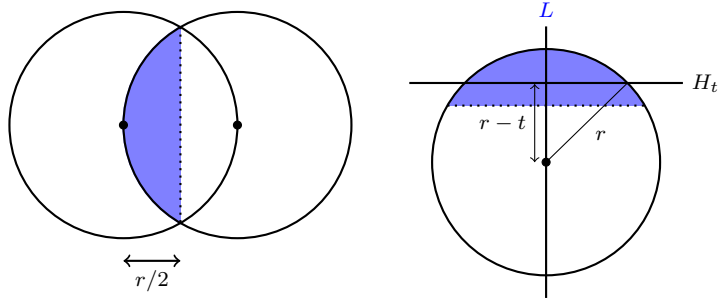

		To compute  the hyperbolic volume of the intersection $B^d(x_r,r)\cap B^d(o,r)$, first observe that the intersection is the union of two caps of height $r/2$ in a hyperbolic ball  {$B^d(o,r)$} of radius $r$, see the left panel of Figure \ref{fig:caps}. The volume of such a cap can be computed using Lemma \ref{lem:disintegration_planes}, see the right panel of Figure \ref{fig:caps}. We choose for $L$ as in Figure \ref{fig:caps} the radial line of the ball passing through the apex of the cap, and set the arclength parameter at the apex to zero so that
$$
\cH_\kk^d(B^d(x_r,r)\cap B^d(o,r))= 2 \int_0^{r/2} \int_{H_t \cap  {B^d(o,r)}} \cosh \big(\sqrt{\kk}\,d_\kk(x,{L})\big)\, \cH_\kk^{d-1}(\dint x) \,\dint t.
$$
Then the intersection $H_t \cap {B^d(o,r)}$ of the ball with the hyperplane $H_t$ orthogonal to $L$ at distance $t$ from the apex is a $(d-1)$-dimensional hyperbolic ball within $H_t$, whose radius is computed by the hyperbolic Pythagorean theorem \cite[Equation (12.97)]{Coxeter} to be
		$$
		{1 \over \sqrt{\kk}}\arcosh \left( {\cosh (\sqrt{\kk}r) \over \cosh (\sqrt{\kk}(r-t))} \right).
		$$
Together with the hyperbolic spherical coordinates from \eqref{eq:polar_integration} within $H_t$, this gives
		\begin{align*}
		& \cH_\kk^d(B^d(x_r,r)\cap B^d(o,r)) \\
		& = 2\omega_{d-1} \kk^{-{d-2\over 2}} \int_0^{r/2} \int_0^{{1 \over \sqrt{\kk}}\arcosh\left( {\cosh (\sqrt{\kk}r) \over \cosh (\sqrt{\kk}(r-t))}\right) } \cosh (\sqrt{\kk }s) \sinh^{d-2}(\sqrt{\kk}s)\,\dint s \,\dint t \\
		&=2 \omega_{d-1} \kk^{-{d-1 \over 2}} \int_{0}^{r/2} \frac{1}{d-1} \sinh^{d-1} \left(\arcosh \left({\cosh (\sqrt{\kk}r) \over \cosh (\sqrt{\kk}(r-t))}\right)\right) \,\dint t \\
		& = {2\kappa_{d-1}}  \kk^{-{d-1 \over 2}} \int_0^{r/2} \left[{\cosh^2(\sqrt{\kk}r) \over \cosh^2(\sqrt{\kk}(r-t))}-1\right]^{{d-1\over 2}} \,\dint t,
		\end{align*}
		where we have used that $\omega_{d-1} = (d-1) \kappa_{d-1}$ and $\sinh^2(u)=\cosh^2(u)-1$ for $u\in\mathbb{R}$. Plugging this back into \eqref{eq:130523A} and making the substitution $(r,t) \mapsto (\skk \,r, \skk\, t)$ completes the proof of part (b) of Theorem \ref{thm:DegreeOrigin}.
	\end{proof}
	
	Next, we show the boundedness of all moments of $\deg(o)$.
	
	\begin{proof}[Proof of Theorem \ref{thm:DegreeOrigin} (a)]
	For $n=1$ the result is clear by the previous proof of Theorem \ref{thm:DegreeOrigin} (b). For $n\geq 2$ consider the $n$-th factorial moment of $\deg(o)$. To emphasise the dependence of this random variable on the underlying intensity $\gamma$, we write $\deg_\gamma(o)$. Using the multivariate Mecke equation \eqref{eq:Meckemulti} for Poisson processes, we obtain
	\begin{align*}
	&\E[\deg_\gamma(o)(\deg_\gamma(o)-1)\cdots(\deg_\gamma(o)-n+1)]\\
	& {= \gamma^n\int_{\H^d(\kk)}\cdots\int_{\H^d(\kk)} \mathbb{P}(x_1,\ldots,x_n\text{ neighbours of $o$ in }\operatorname{RST}(\eta+\delta_{x_1}+\ldots+\delta_{x_n}))} \\
	& \hspace{3.5cm}{\,\cH_\kk^d(\dint x_1)\ldots\cH_\kk^d(\dint x_n) } \\
	&\leq \gamma^n\int_{\H^d(\kk)}\cdots\int_{\H^d(\kk)}\exp(- H_\gamma(x_1,\ldots,x_n))\,\cH_\kk^d(\dint x_1)\ldots\cH_\kk^d(\dint x_n)
	\end{align*}
	with
	$$
	H_\gamma(x_1,\ldots,x_n) := \gamma\cH^d_\kk\Big(\bigcup_{i=1}^n(B^d(x_i,d(o,x_i))\cap B^d(o,d(o,x_i)))\Big)
	$$
	since $x_1,\hdots,x_n$ can be only neighbours of $o$ if $\eta$ has no points in the union set considered in $H_\gamma$. Together with 
	\begin{align*}
	H_\gamma(x_1,\ldots,x_n) 
	&\geq{\gamma \max_{i\in\{1,\dots,n\}}\cH_\kk^d(B^d(x_i,d_\kk(o,x_i))\cap B^d(o,d_\kk(o,x_i)))}\\
	&\geq{\gamma\over n}\sum_{i=1}^n\cH_\kk^d(B^d(x_i,d_\kk(o,x_i))\cap B^d(o,d_\kk(o,x_i)))
	\end{align*}
	this shows that
	\begin{align*}
		&\E[\deg_\gamma(o)(\deg_\gamma(o)-1)\cdots(\deg_\gamma(o)-n+1)] \\
		&\leq n^n\Big({\gamma\over n}\int_{\H^d_\kk}\exp\Big(-{\gamma\over n}\cH_\kk^d(B^d(x,d_\kk(o,x))\cap B^d(o,d_\kk(o,x)))\Big)\cH_\kk^d(\dint x)\Big)^n\\
		&=n^n\big(\E[\deg_{\gamma/n}(o)]\big)^n.
	\end{align*}
	Since $\E[\deg_{\gamma/n}(o)]<\infty$ by part (b) of Theorem \ref{thm:DegreeOrigin}, $\E[\deg_\gamma(o)(\deg_\gamma(o)-1)\cdots(\deg_\gamma(o)-n+1)] <\infty$ and hence $\E[\deg_\gamma(o)^n]<\infty$ as well. This completes the proof of part (a).
	\end{proof}
	
	Finally, we consider the question of boundedness of the degree of the origin.
	
	\begin{proof}[Proof of Theorem \ref{thm:DegreeOrigin} (c)]
		
		 We prove that for every $n \geq 3$ one has $\PP(\deg(o) \geq n) > 0$, which obviously implies the result. Let us fix an arbitrary hyperbolic $2$-plane $E$ passing through $o$.
		 The proof relies on the existence of a hyperbolic regular $n$-gon within $E$ whose side length $s$ is larger than the radius $r$ of its circumscribing circle. Such an $n$-gon may be constructed by gluing together $n$ isosceles triangles with side length $r$, base length $s$ and apex angle $\frac{2\pi}{n}$, see the left panel of Figure \ref{fig:n-gon}. 

\begin{figure}
    \centering
    \begin{minipage}[c]{0.45\textwidth}
        \centering
        			\begin{tikzpicture}[scale=1]
  \def\numvertices{6}
  \def\radius{2.5cm} 
  \foreach \i in {1,...,\numvertices} {
    \coordinate (v\i) at ({90 + (\i - 1) * (360/\numvertices)}:\radius);
  }

  \foreach \i in {1,...,\numvertices} {
    \pgfmathtruncatemacro{\next}{mod(\i, \numvertices) + 1}
    \draw[dashed] (v\i) -- (v\next);
  }

  \draw[black, line width=0.6pt] (v5) -- (0, 0) -- (v6) -- cycle;

  \coordinate (m) at ($(v5)!0.5!(v6)$);
  \node[right] at (m) {$s$};

  \draw[dashed] (0, 0) -- (v5);
  \node[below] at ($(0, 0)!0.5!(v5)$) {$r$};

  \coordinate (O) at (0, 0);
  \draw pic[draw, black, angle radius=0.7cm, "$\frac{2\pi}{n}$", angle eccentricity=1.4] {angle=v5--O--v6};
			\end{tikzpicture}
    \end{minipage}\hfill
    \begin{minipage}[c]{0.45\textwidth}
        \centering
         			\begin{tikzpicture}[scale=1]
			  \def\numvertices{6}
  \def\radius{2.5cm} 
  
  \coordinate (C) at ({90 + (5 - 1) * (360/\numvertices)}:\radius);
  \coordinate (B) at ({90 + (6 - 1) * (360/\numvertices)}:\radius);
  \coordinate (A) at (0, 0);
  \coordinate (M) at ($(B)!0.5!(C)$);
  
  \draw[line width=.6pt] (A) -- node[above left] {$r$} (B) -- node[right] {$\frac{s}{2}$} (M) -- (C) -- cycle;

  \draw[line width=.6pt] (A) -- (M);


  \draw pic[draw, black, angle radius=0.7cm, "$\frac{\pi}{n}$", angle eccentricity=1.4] {angle=M--A--B};
			\end{tikzpicture}
    \end{minipage}
\caption{Left: Constructing a regular $n$-gon; Right: The hyperbolic law of sines: $\sin {\pi\over n}={\sinh (s/2)\over\sinh r} $.}\label{fig:n-gon}
\end{figure}
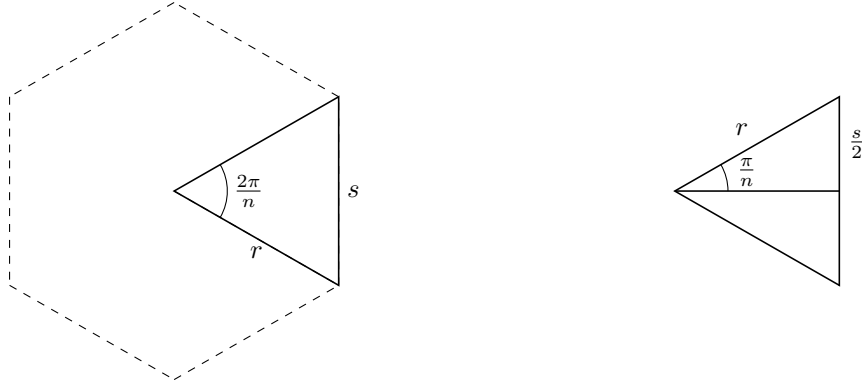

By the hyperbolic law of sines \cite[Equation (12.96)]{Coxeter} one computes, see the right panel of Figure \ref{fig:n-gon},
		 \begin{equation}\label{eq:s}
		 s = {2\over \skk} \operatorname{arsinh} \left(\sinh (\skk r) \cdot \sin {\pi\over n}\right) \geq {2\over \skk} \log \left(2\sinh (\skk r) \cdot \sin {\pi\over n}\right),
		 \end{equation}
which follows from $ \operatorname{arsinh} (x)=\log(x+\sqrt{x^2+1})\geq \log(2x)$ for any $x>0$.
	 		
For $r =  \big(1+\frac{2}{\skk}\big)\log(n)$ we have, using in the second step the inequality $\sin x \geq {2\over \pi}x$ for $x\in [0, {\pi\over 2}]$ (which follows from concavity of $x \mapsto \sin x$ on $[0,{\pi\over 2}]$) with $x={\pi\over n}$,
	 	\begin{align*}
	 		s&\geq {2\over \skk} \log \left(2\sinh ((\skk+2)\log(n)) \cdot \sin {\pi\over n}\right)\\
	 		& \geq {2\over \skk} \log \left( \Big(n^{\skk+2}-n^{-(\skk+2)}\Big) \cdot{2\over n} \right) \\
	 		& = {2\over \skk} \log \left( n^{\skk+1}\cdot 2 \big(1-n^{-2(\skk + 2)}\big) \right) \\
	 		&={2\over \skk} \left[ (1+\skk) \log n + \log\left( 2\big(1-n^{-2(\skk+2)} \big) \right)\right] \\
	 		& \geq  \left(2+\frac{2}{\skk}\right)\log(n) > r,
	 	\end{align*}
	 	as desired, where in the penultimate inequality we used that $2\big(1-n^{-2(\skk+2)} \big)\geq 1$.
		 
Now consider within the $2$-plane $E$ a regular $n$-gon as above, circumscribed in the circle of radius $r$ {around} the origin $o$. Denote its vertices by $v_1, \ldots, v_n$. Choose $\eps > 0$ small enough so that $r + 3 \eps < s$, and let $B_i = B^d(v_i, \eps)$ be the $d$-dimensional hyperbolic ball around $v_i$ of radius $\eps$. By construction, for any $i \neq j$ and any $x_i \in B_i$ and $x_j \in B_j$ one has
		\begin{equation*}
		d_\kk(o,x_i) < d_\kk(x_i,x_j).
		\end{equation*}
		Now consider the event $A_n$ that inside each ball $B_i$ lies precisely one point of the {Poisson process} $\eta$, and there are no other points inside the ball $B^d(o,r+\eps)$. Conditionally on $A_n$, one immediately sees that the origin is the radial nearest neighbour of each of the points lying in the balls $B_i$, and in particular, $\deg(o) \geq n$. Therefore,
		\begin{equation*}
		\PP(\deg(o) \geq n) \geq \PP (A_n) \geq \PP(\eta(B_1)=1)^n \cdot\PP\Big(\eta \Big(B^d(o,r+\eps)\setminus\bigcup_{i=1}^n B_i\Big) = 0\Big) > 0
		\end{equation*}
		and the proof is complete.
	\end{proof}

\section{Expectation asymptotics for $\cL_{R,\gamma,\kk}^{(\alpha)}$}\label{sec:exp_LR}
	
In this section we study the asymptotics of the expectations of the edge-length functionals of the radial spanning tree. Recall that we denote by $\ell_\kk(x, \eta)$ the distance from $x \in \eta$ to its radial nearest neighbour, and by 
		\begin{align*}
			\mathcal{L}_{R,\gamma,\kk}^{(\alpha)}:=\sum_{x\in\eta_R}\ell_\kk(x,{\eta})^\alpha
		\end{align*}
		the sum of the $\alpha$-th powers of the edge-lengths of $\RST(\eta_R)$. We begin with the following geometric result.
	
	\begin{lemma}\label{lem:Intersection}
Let $u>0$ be fixed. For $R>0$ let $B^d(o,R)$ be a hyperbolic ball around the origin $o \in \H^d(\kk)$, and let $B^d(x_R,u)$ be another hyperbolic ball of radius $u>0$ around a point $x_R$ lying on the boundary of the first ball, i.e.\ $d_\kk(o,x_R)=R$. Denote by ${\rm HB}_e(o)$ the horoball around some fixed ideal point $e\in\partial\mathbb{H}^d(\kk)$ passing through $o$. Then
			\begin{equation*}
		\lim_{R \to \infty} \mathcal{H}_\kk^d \big(  B^d(o,R) \cap B^d(x_R,u)\big) {= \cH^d_\kk({\rm HB}_e(o)\cap B^d(o,u))} = \kk^{-d/2}G(\skk u)
		\end{equation*}
		with the function {$G$} as in Theorem \ref{thm:ExpecationVariance} (a).
	\end{lemma}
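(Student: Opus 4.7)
The plan is two-fold: first, reduce the limit to a fixed picture via an isometry and identify the limit of balls with a horoball through a Busemann-function argument; second, evaluate the horoball--ball volume explicitly by horospherical disintegration.

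For the first step, by homogeneity of $\H^d(\kk)$, fix once and for all an arclength geodesic ray $\tau\colon[0,\infty)\to\H^d(\kk)$ with $\tau(0)=o$ and ideal endpoint $\tau(\infty)=e$. For each $R>0$ I would choose an isometry $\phi_R$ sending $x_R\mapsto o$ and $o\mapsto\tau(R)$; such a $\phi_R$ exists because $d_\kk(x_R,o)=R=d_\kk(o,\tau(R))$. Isometry invariance of the hyperbolic volume rewrites the quantity of interest as $\cH^d_\kk(B^d(\tau(R),R)\cap B^d(o,u))$. For fixed $x\in\H^d(\kk)$ the condition $x\in B^d(\tau(R),R)$ reads $R-d_\kk(x,\tau(R))>0$, and by the very definition of the Busemann function $B_\tau$ this quantity tends to $B_\tau(x)$ as $R\to\infty$. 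Since the horoball through $o$ around $e$ coincides with $\{B_\tau\ge 0\}$, while its boundary horosphere is a smooth $(d-1)$-submanifold and therefore $\cH^d_\kk$-null, the indicators $\mathbf{1}_{B^d(\tau(R),R)}$ converge pointwise almost everywhere to $\mathbf{1}_{\HB_e(o)}$. Dominated convergence with majorant $\mathbf{1}_{B^d(o,u)}$, integrable because $V_\kk(u)<\infty$, then delivers the first equality.

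For the second step, by the rescaling \eqref{eq:Vk-rescale} it suffices to treat $\kk=1$, the general case following on replacing $u$ by $\skk u$ and picking up a volume factor $\kk^{-d/2}$. Apply Lemma \ref{lem:disintegration_horo} to the horospheres $(H_t)_{t\in\R}$ parallel to $\partial\HB_e(o)$, parametrised by signed distance from $o$. Because $\HB_e(o)=\bigcup_{t\ge 0}H_t$ and $d_1(o,H_t)=|t|$ forces $H_t\cap B^d(o,u)=\emptyset$ for $t>u$, one obtains
\begin{equation*}
\cH^d_1(\HB_e(o)\cap B^d(o,u))=\int_0^u\cH^{d-1}_1(H_t\cap B^d(o,u))\,\dint t.
\end{equation*}
Working in the upper half-space model with $e=\infty$ and $o=(0,\ldots,0,1)$, the horosphere $H_t$ is the Euclidean hyperplane $\{x_d=e^t\}$ and $B^d(o,u)$ is described by $|x-o|_E^2\le 2x_d(\cosh u-1)$; the factorisation $e^{2t}-2e^t\cosh u+1=(e^t-e^u)(e^t-e^{-u})$ then shows that the slice is a Euclidean ball of squared radius $2e^t(\cosh u-\cosh t)$. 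Since the induced metric on $H_t$ is Euclidean rescaled by $e^{-t}$, the $(d-1)$-dimensional hyperbolic volume of this slice equals $\kappa_{d-1}\bigl(2e^{-t}(\cosh u-\cosh t)\bigr)^{(d-1)/2}$, and integrating over $t\in[0,u]$ recovers exactly $G(u)$.

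The main obstacle is the first step: identifying the limit of $B^d(\tau(R),R)$ with $\HB_e(o)$ and justifying the interchange of limit and integral. Both points follow cleanly from the fact, recalled in Section \ref{sec:background_hyperbolic}, that $B_\tau$ has the horoball through $o$ as its sup-level set and that $R-d_\kk(\,\cdot\,,\tau(R))\to B_\tau$ pointwise, together with the observation that the boundary horosphere is a $\cH^d_\kk$-null set. The second step is then essentially bookkeeping, driven by the elementary algebraic identity above that matches the squared radius to the integrand defining $G$.
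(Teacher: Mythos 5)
Your proof is correct and follows essentially the same route as the paper: an isometry reduction to fix the small ball at the origin, convergence of the large balls to a horoball, and then horospherical disintegration via Lemma \ref{lem:disintegration_horo}. You flesh out two points the paper treats more tersely --- justifying the volume convergence rigorously through the Busemann function, monotone pointwise convergence of indicators off the (null) boundary horosphere, and dominated convergence, and computing the slice volume $\cH^{d-1}_1(H_t\cap B^d(o,u))$ directly in the upper half-space model rather than citing \cite[Proposition~4.1]{KRT} --- but the underlying argument is the same.
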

\begin{proof}
Applying appropriate hyperbolic isometries, we may assume instead that the centre of {the ball with radius $u$} is fixed at the origin, and compute the volume of the intersection of the ball $B^d({o},u)$ with a sequence of {growing} balls $B_R$ of radii $R > 0$ passing through {$o$} with {centres tending to $e$.} As $R \to \infty$ the balls $B_R$ converge to a horoball ${\rm HB}_e(o)$. We conclude that
\begin{equation*}
\lim_{R \to \infty} \mathcal{H}_\kk^d \big(  B^d(o,R) \cap B^d(x_R,u)\big) {= \mathcal{H}_\kk^d \big( B^d(o,u) \cap {\rm HB}_e(o)\big).}
\end{equation*}
Finally, to compute the latter volume we consider the family $(H_t)_{t \in \R}$ of parallel horospheres corresponding to the horoball {${\rm HB}_e(o)$,} parametrised so that $H_t$ has signed distance $t$ from {$o$.} With this convention, the horoball {${\rm HB}_e(o)$} is the union $\bigcup_{t \geq 0}H_t$. Applying Lemma \ref{lem:disintegration_horo}, we get
\begin{equation}\label{eq:inter_vol_integ}
\mathcal{H}_\kk^d \big( B^d(o,u) \cap {\rm HB}_e(o)\big) = \int_{0}^\infty \cH_\kk^{d-1}(B^d({o},u) \cap H_t) \,\dint t.
\end{equation}
The volume appearing on the {right-hand} side is computed as in \cite[Proposition 4.1]{KRT}, which deals with the curvature $\kk=1$. It is zero unless $t \leq u$, in which case it is given by
\begin{equation*}
\cH^{d-1}_\kk(B^d({o},u) \cap H_t) = \kappa_{d-1}  \kk^{-{d-1 \over 2}} \left[2e^{-\skk t} \big(\cosh (\skk u) - \cosh (\skk t) \big)\right]^{d-1 \over 2}.
\end{equation*}
Substituting this into \eqref{eq:inter_vol_integ} and changing variables yields the result.
\end{proof}

The following lemma is essential for deriving the asymptotics of the expectation and provides moment estimates that are used for the proofs of the variance asymptotics and the central limit theorem as well.

{\begin{lemma}\label{lem:expectation_ell}
Let $\alpha> 0$.
\begin{itemize}
\item [(a)] For every $x\in\H^d(\kk)$,
$$
\E[\ell_\kk(x,\eta+\delta_x)^\alpha] = \alpha \int_0^{d_\kk(o,x)} u^{\alpha-1} \exp(-\gamma \mathcal{H}_\kk^d(B^d(x,u)\cap B^d(o,d_\kk(o,x)))) \;\mathrm{d}u.
$$ 
\item [(b)] There exists a constant $c(\alpha)>0$ only depending on $\alpha$ and $d$ such that
$$
\gamma^{\alpha/d}\E[\ell_\kk(x,\eta+\delta_x)^\alpha \id\{\ell_\kk(x,\eta+\delta_x)\geq t\}] \leq c(\alpha) \id\{ t\leq d_\kk(o,x) \} \exp(-\gamma V_\kk(t/2)/2)
$$
for all $x\in\H^d(\kk)$ and $t\geq 0$.
\end{itemize}
\end{lemma}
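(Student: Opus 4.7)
The plan for part (a) is a direct application of the standard layer-cake formula combined with the void probability of the Poisson process. Since the origin always lies in the candidate set defining the radial nearest neighbour, one has $\ell_\kk(x,\eta+\delta_x)\le d_\kk(o,x)$, so in the identity $\E[\ell_\kk(x,\eta+\delta_x)^\alpha]=\alpha\int_0^\infty u^{\alpha-1}\p(\ell_\kk(x,\eta+\delta_x)\ge u)\,\dint u$ one may truncate the integral at $d_\kk(o,x)$. For $u\le d_\kk(o,x)$ the event $\{\ell_\kk(x,\eta+\delta_x)\ge u\}$ coincides with $\{\eta(B^d(x,u)\cap B^d(o,d_\kk(o,x)))=0\}$, whose probability equals $\exp(-\gamma\cH^d_\kk(B^d(x,u)\cap B^d(o,d_\kk(o,x))))$. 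This yields (a) immediately.

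For part (b) I would first note that the indicator is forced by $\ell_\kk(x,\eta+\delta_x)\le d_\kk(o,x)$: if $t>d_\kk(o,x)$ the left-hand side vanishes. For $t\le d_\kk(o,x)$ the analogous layer-cake identity reads
\[
\E[\ell_\kk(x,\eta+\delta_x)^\alpha\id\{\ell_\kk(x,\eta+\delta_x)\ge t\}]=t^\alpha\p(\ell_\kk(x,\eta+\delta_x)\ge t)+\alpha\int_t^{d_\kk(o,x)}u^{\alpha-1}\p(\ell_\kk(x,\eta+\delta_x)\ge u)\,\dint u.
\]
To control the tail uniformly I would prove the purely geometric lower bound $\cH^d_\kk(B^d(x,u)\cap B^d(o,d_\kk(o,x)))\ge V_\kk(u/2)$ for $u\le d_\kk(o,x)$ by picking the point $y$ on the geodesic from $x$ to $o$ at distance $u/2$ from $x$ and invoking the triangle inequality: then $B^d(y,u/2)$ sits inside both balls, so $\p(\ell_\kk(x,\eta+\delta_x)\ge u)\le e^{-\gamma V_\kk(u/2)}$.

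The last step is to extract the exponential prefactor $e^{-\gamma V_\kk(t/2)/2}$ and absorb the rest into a $\gamma^{-\alpha/d}$-scaled constant depending only on $\alpha$ and $d$. For $u\ge t$, monotonicity of $V_\kk$ yields $V_\kk(u/2)\ge V_\kk(t/2)/2+V_\kk(u/2)/2$, and the curvature-free lower bound $V_\kk(r)\ge\kappa_d r^d$, a direct consequence of $\sinh(\skk s)\ge\skk s$ applied to \eqref{eq:volume_ball}, gives $e^{-\gamma V_\kk(u/2)}\le e^{-\gamma V_\kk(t/2)/2}\,e^{-\gamma\kappa_d u^d/2^{d+1}}$. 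Substituting this back, bounding the boundary term $t^\alpha e^{-\gamma\kappa_d t^d/2^{d+1}}$ by its global maximum over $t\ge 0$, and performing the substitution $v=\gamma^{1/d}u$ in the remaining integral completes the bound. The point that needs the most attention, rather than a true obstacle, is checking that no $\kk$-dependence slips into the final constant: the curvature-free lower bound on $V_\kk$ is exactly what prevents that, since it absorbs all curvature into the common prefactor and leaves a $\kk$-independent integral.
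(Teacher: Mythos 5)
Your proposal is correct and follows essentially the same route as the paper: layer-cake with void probabilities for (a), the geometric inclusion of a ball of radius $u/2$ at the midpoint to get $\cH_\kk^d(B^d(x,u)\cap B^d(o,d_\kk(o,x)))\geq V_\kk(u/2)$, and the curvature-free bound $V_\kk(r)\geq\kappa_d r^d$ to extract the factor $e^{-\gamma V_\kk(t/2)/2}$ and a $\gamma$-free constant for (b). The only cosmetic difference is that you split off a boundary term $t^\alpha\,\PP(\ell_\kk\ge t)$ and bound it separately, whereas the paper keeps $\max\{u,t\}$ inside a single integral and derives (a) as the $t=0$ case of (b).
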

\begin{proof}
First we compute the expectation from part (b), which can be written as
$$
\E[\ell_\kk(x,\eta+\delta_x)^\alpha \id\{\ell_\kk(x,\eta+\delta_x)\geq t\}] = \int_0^\infty \p(\ell_\kk(x,\eta+\delta_x) \geq \max\{s^{1/\alpha},t\})\;\mathrm{d}s.
$$
{Since $\ell_\kk(x,\eta+\delta_x) \leq  d_\kk(o,x)$ and $\ell_\kk(x,\eta+\delta_x)\geq \tilde{u}$ if and only if} $\eta$ has no points in $B^d(x,\tilde{u})\cap B^d(o,d_\kk(o,x))$ for $\tilde{u}\in (0,d_\kk(o,x))$, we obtain
\begin{align*}
& \E[\ell_\kk(x,\eta+\delta_x)^\alpha \id\{\ell_\kk(x,\eta+\delta_x)\geq t\}] \\
& = \id\{ t\leq d_\kk(o,x) \} \int_0^{d_\kk(o,x)^\alpha} \p(\ell_\kk(x,\eta+\delta_x) \geq \max\{s^{1/\alpha},t\})\;\mathrm{d}s \\
& = \id\{ t\leq d_\kk(o,x) \} \int_0^{d_\kk(o,x)^\alpha} \p( \eta(B^d(x,\max\{s^{1/\alpha},t\})\cap B^d(o,d_\kk(o,x)))=0) \;\mathrm{d}s \\
& = \id\{ t\leq d_\kk(o,x) \} \int_0^{d_\kk(o,x)^\alpha} \exp(-\gamma \mathcal{H}_\kk^d(B^d(x,\max\{s^{1/\alpha},t\})\cap B^d(o,d_\kk(o,x)))) \;\mathrm{d}s \\
& = \id\{ t\leq d_\kk(o,x) \} \alpha\int_0^{d_\kk(o,x)} u^{\alpha-1} \exp(-\gamma \mathcal{H}_\kk^d(B^d(x,\max\{u,t\})\cap B^d(o,d_\kk(o,x)))) \;\mathrm{d}u,
\end{align*}
where we used the substitution $s = u^\alpha$ in the last step. For $t=0$ this yields part (a). Note that for $\tilde{u}\in(0,d_\kk(o,x))$, the intersection $B^d(x,\tilde{u})\cap B^d(o,d_\kk(o,x))$ contains the hyperbolic ball of radius $\tilde{u}/2$ around the midpoint between $x$ and the point on the boundary of $B^d(x,\tilde{u})$ closest to the origin. Hence, we have
\begin{equation}\label{eqn:intersection_balls}
\mathcal{H}_\kk^d(B^d(x,\tilde{u})\cap B^d(o,d_\kk(o,x))) \geq V_\kk(\tilde{u}/2).
\end{equation}
This implies
\begin{align*}
& \E[\ell_\kk(x,\eta+\delta_x)^\alpha \id\{\ell_\kk(x,\eta+\delta_x)\geq t\}] \\
& \leq \id\{ t\leq d_\kk(o,x) \} \alpha \int_0^{d_\kk(o,x)} u^{\alpha-1} \exp(-\gamma V_\kk(\max\{u,t\}/2)) \;\mathrm{d}u \\
& \leq \id\{ t\leq d_\kk(o,x) \} \exp(-\gamma V_\kk(t/2)/2) \alpha \int_0^\infty u^{\alpha-1} \exp(-\gamma V_\kk(u/2)/2) \;\mathrm{d}u.
\end{align*}
Since $\sinh(s)\geq s$ for all $s\geq 0$, it follows from \eqref{eq:volume_ball} that
$$
V_\kk(r) = \omega_d \kk^{-\frac{d-1}{2}} \int_0^r \sinh^{d-1}(\sqrt{\kk}v) \;\mathrm{d}v \geq d\kappa_d \kk^{-\frac{d-1}{2}} \int_0^r (\sqrt{\kk}v)^{d-1} \;\mathrm{d}v = \kappa_d r^d
$$
for all $r\geq 0$. This leads to
\begin{align*}
\gamma^{\alpha/d} \alpha \int_0^\infty u^{\alpha-1} \exp(-\gamma V_\kk(u/2)/2) \;\mathrm{d}u & \leq \gamma^{\alpha/d} \alpha \int_0^\infty u^{\alpha-1} \exp(-\gamma \kappa_d 2^{-d-1}u^d) \;\mathrm{d}u \\
& = \alpha \int_0^\infty v^{\alpha-1} \exp(- \kappa_d 2^{-d-1}v^d) \;\mathrm{d}v,
\end{align*}
which shows part (b).
\end{proof}}

With these preparations, we can prove the asymptotics for the expectations of the edge-length functionals.

\begin{proof}[Proof of Theorem \ref{thm:ExpecationVariance} (a)]
	The Mecke equation \eqref{eq:Mecke} for {Poisson processes} and Lemma \ref{lem:expectation_ell} (a) imply that
	\begin{align*}
		\E[\mathcal{L}_{R,\gamma,\kk}^{(\alpha)}]&=\E\left[\sum_{x\in\eta_R}\ell_\kk(x,\eta)^\alpha\right] =\gamma\int_{B^d(o,R)}\E[\ell_\kk(x,\eta+\delta_x)^\alpha]\;\mathcal{H}_\kk^d(\mathrm{d}x)\\
		&= \gamma\alpha\int_{B^d(o,R)} \int_0^{d_\kk(o,x)} u^{\alpha-1} \exp(-\gamma \mathcal{H}_\kk^d(B^d(x,u)\cap B^d(o,d_\kk(o,x)))) \;\mathrm{d}u \; \mathcal{H}_\kk^d(\mathrm{d}x).
	\end{align*}
Using the polar integration formula \eqref{eq:polar_integration} we compute
\begin{align*}
	&\E[\mathcal{L}_{R,\gamma,\kk}^{(\alpha)}]\\
	& = \gamma\alpha\omega_{d}\kk^{-{d-1\over 2}}\int_0^{R}\int_0^{s}u^{\alpha-1}\exp\left(-\gamma\mathcal{H}_\kk^d(B^d(x_s,u)\cap B^d(o,s))\right)\sinh^{d-1}(\skk s)\;\mathrm{d}u\,\mathrm{d}s,
\end{align*}
where $x_s$ is a point in $B^d(o,s)$ with $d_\kk(x_s,o)=s$. {Since $\E[\mathcal{L}_{R,\gamma,\kk}^{(\alpha)}]\to\infty$ and $V_\kk(R)\to\infty$ as $R\to\infty$, by l'Hospital's rule and \eqref{eq:volume_ball} we obtain
\begin{align*}
\lim\limits_{R\to\infty} \frac{\E[\mathcal{L}_{R,\gamma,\kk}^{(\alpha)}]}{V_\kk(R)} & = \lim\limits_{R\to\infty} \frac{\gamma\alpha\omega_{d}\kk^{-{d-1\over 2}}}{\omega_d\kk^{-\frac{d-1}{2}}\sinh^{d-1}(\sqrt{\kk}R)} \\
& \hspace{1.2cm} \times \int_0^{R}u^{\alpha-1}\exp\left(-\gamma\mathcal{H}_\kk^d(B^d(x_R,u)\cap B^d(o,R))\right)\sinh^{d-1}(\skk R)\;\mathrm{d}u \\
& = \lim\limits_{R\to\infty} \gamma\alpha \int_0^{R}u^{\alpha-1}\exp\left(-\gamma\mathcal{H}_\kk^d(B^d(x_R,u)\cap B^d(o,R))\right)\;\mathrm{d}u.
\end{align*}
Since, by \eqref{eqn:intersection_balls}, the integrand is bounded by the integrable function $u\mapsto u^{\alpha-1}\exp(-\gamma V_\kk(u/2))$, $u>0$, the dominated convergence theorem and Lemma \ref{lem:Intersection} lead to
\begin{align*}
\lim\limits_{R\to\infty} \frac{\E[\mathcal{L}_{R,\gamma,\kk}^{(\alpha)}]}{V_\kk(R)} & = \gamma\alpha \int_0^{\infty} \lim\limits_{R\to\infty} u^{\alpha-1}\exp\left(-\gamma\mathcal{H}_\kk^d(B^d(x_R,u)\cap B^d(o,R))\right)\;\mathrm{d}u \\
& = \gamma\alpha \int_0^{\infty} u^{\alpha-1} \exp\left(- \gamma \kk^{-d/2} G(\skk u)\right) \;\mathrm{d}u.
\end{align*}
Combining this with the change of variables ${v} =u^\alpha$ we conclude that
$$
\lim\limits_{R\to\infty}\frac{\E[\mathcal{L}_{R,\gamma,\kk}^{(\alpha)}]}{V_\kk(R) } =\gamma \int_0^\infty \exp\left(- \gamma \kk^{-d/2} G(\skk {v}^{1/ \alpha})\right)\,\dint {v}, 
$$
which completes the proof.}
\end{proof}

\section{A scaling property for $\cL_{R,\gamma,\kk}^{(\alpha)}$} \label{sec:scaling}

The following proposition provides for the distributional behaviour of the edge-length functional $\mathcal{L}_{R,\gamma,\kk}^{(\alpha)}$ a relation between the intensity $\gamma$ of the Poisson process $\eta$ and the curvature parameter $\kk$, which will allow us to deduce results for $\mathcal{L}_{R,\gamma,\kk}^{(\alpha)}$ with general $\kk$ from those for the special case $\kk=1$. In particular, it shows that increasing the intensity of the Poisson process is equivalent to simultaneously increasing the window size and shrinking the curvature -- or vice versa.

\begin{prop}\label{prop:Scaling}
Fix $\kk, R,\gamma,\alpha>0$, and recall that $\cL^{(\alpha)}_{R,\gamma,{\kk}}$ denotes the edge-length functional of the radial spanning tree based on a stationary Poisson process in $\H^d(\kk)$ with intensity $\gamma$. Then for all $\lambda > 0 $ one has the  following equality in law:
\[
\cL^{(\alpha)}_{R,\gamma,{\kk}} \overset{d}{=} \lambda^\alpha \cL^{(\alpha)}_{\lambda^{-1}R,\lambda^d\gamma,\lambda^2{\kk}}.
\]
\end{prop}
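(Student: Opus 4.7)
My plan is to exploit the fact that hyperbolic spaces of different curvatures are really the same smooth manifold with rescaled Riemannian metric. Concretely, from the formula $g_\kk = \kk^{-1} g_1$ one reads off
$$g_{\lambda^2 \kk} = \lambda^{-2} g_\kk,$$
and therefore, on the underlying set, $d_{\lambda^2 \kk} = \lambda^{-1} d_\kk$ and $\cH^d_{\lambda^2 \kk} = \lambda^{-d} \cH^d_\kk$. Thus a single point configuration $\eta \subset \H^d$ may be viewed simultaneously as a subset of $\H^d(\kk)$ and of $\H^d(\lambda^2 \kk)$, and its intensity measure transforms accordingly: if $\eta$ is stationary Poisson of intensity $\gamma$ on $\H^d(\kk)$, i.e.\ has intensity measure $\gamma\, \cH^d_\kk$, then the same configuration is stationary Poisson of intensity $\lambda^d \gamma$ on $\H^d(\lambda^2 \kk)$, since $\gamma\, \cH^d_\kk = (\lambda^d \gamma)\, \cH^d_{\lambda^2 \kk}$.

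Next I would check that the construction of the radial spanning tree is invariant under this identification. Since the two metrics differ only by the positive constant factor $\lambda$, the orderings $d_\kk(o,y) < d_\kk(o,x)$ and $d_{\lambda^2\kk}(o,y) < d_{\lambda^2\kk}(o,x)$ agree, and likewise the $\argmin$ defining $n(x,\eta)$ is the same whether computed with $d_\kk$ or $d_{\lambda^2\kk}$. Hence the combinatorial tree $\RST(\eta)$ and the neighbour map $x \mapsto n(x,\eta)$ are identical in the two viewpoints. Moreover, the window transforms cleanly:
$$B^d_{d_\kk}(o,R) = \{y : d_\kk(o,y) < R\} = \{y : d_{\lambda^2\kk}(o,y) < \lambda^{-1}R\} = B^d_{d_{\lambda^2\kk}}(o, \lambda^{-1}R),$$
so the restriction $\eta_R$ is literally the same set of points as the restriction to the ball of radius $\lambda^{-1}R$ in the rescaled-curvature geometry.

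Now I simply translate the defining sum. Writing $\ell_\kk$ and $\ell_{\lambda^2\kk}$ for the distances from a point to its radial nearest neighbour in the two metrics, the identity $\ell_\kk(x,\eta) = \lambda \cdot \ell_{\lambda^2\kk}(x,\eta)$ holds pointwise, hence
$$
\cL^{(\alpha)}_{R,\gamma,\kk} = \sum_{x \in \eta \cap B^d_{d_\kk}(o,R)} \ell_\kk(x,\eta)^\alpha = \lambda^\alpha \sum_{x \in \eta \cap B^d_{d_{\lambda^2\kk}}(o,\lambda^{-1}R)} \ell_{\lambda^2\kk}(x,\eta)^\alpha.
$$
Since $\eta$, viewed in $\H^d(\lambda^2\kk)$, is by the first paragraph a stationary Poisson process of intensity $\lambda^d \gamma$, the sum on the right is precisely a realisation of $\cL^{(\alpha)}_{\lambda^{-1}R,\lambda^d\gamma,\lambda^2\kk}$, yielding the claimed equality in distribution (in fact, a coupling-level equality).

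Nothing in this argument is genuinely hard; the only point requiring care is a clean justification of the curvature-rescaling identification and of the invariance of stationarity and of the Poisson property, i.e.\ verifying that a change of Riemannian metric by a constant factor really does transform the intensity measure and the metric in the way used above. Once that geometric bookkeeping is in place, the rest is a direct substitution in the defining formulas.
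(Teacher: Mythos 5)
Your proof is correct and takes essentially the same approach as the paper's: both exploit the identity $g_{\lambda^2\kk}=\lambda^{-2}g_\kk$, so that a single configuration read in the rescaled metric has distances multiplied by $\lambda^{-1}$ and Hausdorff measure multiplied by $\lambda^{-d}$, which maps $(R,\gamma,\kk)\mapsto(\lambda^{-1}R,\lambda^d\gamma,\lambda^2\kk)$ while preserving the radial-nearest-neighbour combinatorics. The paper phrases this via an explicit origin-preserving diffeomorphism $f$ with $f^*g_{\lambda^2\kk}=\lambda^{-2}g_\kk$, but immediately notes one may take $f$ to be the identity map, which is exactly your formulation.
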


As a special case, the previous result shows that $\cL_{1,\gamma,\varkappa}^{(\alpha)}$ has the same distribution as $\gamma^{-\alpha/d}\cL_{\gamma^{1/d},1,\gamma^{-2/d}\varkappa}$, verifying the claim from the end of Section \ref{subsec:ComparisonEuclidean} about the asymptotic Euclidean behaviour of the {edge-length} functional for $\gamma\to \infty$.

\begin{proof}[Proof of Proposition \ref{prop:Scaling}]	
	The proof is based on the fact that the Riemannian metric $g_{\lambda^2 \kk}$ {is} isometric to the rescaling $\lambda^{-2}\, g_\kk$. More precisely, given any model $(\H^d(\kk),g_\kk)$ of the hyperbolic space of curvature $-\kk$, the same space with the rescaled metric $\lambda^{-2} g_\kk$ has constant sectional curvature $-\lambda^2\kk$, and is thus a model of {$\H^d(\lambda^2\kk)$.} Put differently, there is an origin-preserving diffeomorphism
	\[
	f : \H^d(\kk) \to \H^d(\lambda^2\kk) \qquad \text{such that} \qquad (f)^*g_{\lambda^2\kk}: = g_{\lambda^2\kk}(\dint f (\cdot), \dint f(\cdot) ) = \lambda^{-2}\, g_\kk
	\]
	(in the above terms, one can take as $f$ the identity map $(\H^d(\kk),g_\kk) \to (\H^d(\kk),\lambda^{-2} g_\kk)$). Here, $(f)^*g_{\lambda^2\kk}$ stands for the pullback of $g_{\lambda^2\kk}$ under $f$. In particular, the map $f$ rescales distances, i.e. $$ d_{\lambda^2 \kk}(f (x), f(y)) = { \lambda^{-1  }}\, d_\kk(x,y). $$ For balls in the corresponding spaces it follows immediately that 
	\[
	f (B_\kk^d(o,R)) = B_{\lambda^2\kk}^d (o,\lambda^{-1}R)
	\]
	and
	\[
	f_* \cH^d_\kk = \lambda^d \,\cH^d_{\lambda^2\kk},
	\]
	where $f_* \cH^d_\kk$ denotes the pushforward of $\cH^d_\kk$ under $f$. Let $\eta_{\gamma,\kk}$ be a stationary Poisson process on $\H^d(\kk)$ with intensity $\gamma>0$ and $\eta_{\lambda^d\gamma,\lambda^2\kk}$ be a stationary Poisson process on $\H^d({\lambda^2\kk})$ with intensity $\lambda^d\gamma$. Then $f(\eta_{\gamma,\kk}) \overset{d}{=} \eta_{\lambda^d\gamma,\lambda^2\kk}$. The same holds for their restrictions to balls of radius $R$ and $\lambda^{-1}R$:
	$
	f\left(\eta_{R,\gamma,\kk}\right) \overset{d}{=} \eta_{\lambda^{-1}R,\lambda^d\gamma,\lambda^2\kk}.
	$
	Moreover, since $f$ rescales distances and preserves the origin, it clearly preserves radial nearest neighbours, and hence for $x \in \eta_{\gamma,\kk}$ one has
	\[
	\ell_\kk(x,\eta_{\gamma,\kk}) = \lambda \, \ell_{\lambda^2\kk}\big(f(x),f(\eta_{\gamma,\kk})\big).
	\]
	Therefore,
	\begin{align*}
		\cL^{(\alpha)}_{R,\gamma,\kk} &= \sum_{x \in \eta_{R,\gamma,\kk}} \ell_\kk(x,\eta_{\gamma,\kk})^\alpha \overset{d}{=} \sum_{y \in \eta_{\lambda^{-1}R,\lambda^d\gamma,\lambda^2\kk}}  \left[\lambda \, \ell_{\lambda^2\kk} (y, \eta_{\lambda^d\gamma,\lambda^2\kk})\right]^\alpha 
		= \lambda^\alpha {\cL^{(\alpha)}_{\lambda^{-1}R,\lambda^d\gamma,\lambda^2\kk},}
	\end{align*}
	which completes the proof.
\end{proof}

\section{Variance asymptotics for $\cL_{R,\gamma,\kk}^{(\alpha)}$}\label{sec:variance}

In this section we consider the variance asymptotics of the edge-length functionals. It follows from the scaling relation in Proposition \ref{prop:Scaling} with $\lambda=1/\sqrt{\kk}$ and \eqref{eq:Vk-rescale} that
\begin{equation}\label{eqn:variance_kk}
\frac{\Var[\cL^{(\alpha)}_{R,\gamma,\kk}]}{V_\kk(R)} = \frac{\kk^{-\alpha} \Var[ \cL^{(\alpha)}_{\sqrt{\kk}R,\kk^{-d/2}\gamma,1} ]}{\kk^{-d/2} V_1(\sqrt{\kk} R)} = \kk^{d/2-\alpha} \frac{\Var[ \cL^{(\alpha)}_{\sqrt{\kk}R,\kk^{-d/2}\gamma,1} ]}{V_1(\sqrt{\kk} R)}.
\end{equation}
This identity implies for the asymptotic variance constants
\begin{equation}\label{eqn:asymptotic_variance_kk}
\VV^{(\alpha)}_{\gamma,\kk} = \kk^{\frac{d}{2}-\alpha} \VV^{(\alpha)}_{\kk^{-d/2}\gamma,1}
\end{equation}
so that $\gamma^{\frac{2\alpha}{d}-1}\VV^{(\alpha)}_{\gamma,\kk}= (\kk^{-d/2}\gamma)^{\frac{2\alpha}{d}-1}\VV^{(\alpha)}_{\kk^{-d/2}\gamma,1}$. Due to \eqref{eqn:variance_kk} and \eqref{eqn:asymptotic_variance_kk}, it is sufficient to prove Theorem \ref{thm:ExpecationVariance} (b) for the case $\kk=1$, which we consider in the following. We denote by $d_1$, $\cH^d_1$ and $V_1$ the distance function, the Hausdorff measure and the volume growth function of the hyperbolic space $\H^d:=\H^d(1)$ of curvature $-1$. Moreover, we write {$\eta$  and $\eta_R$} for the Poisson process with intensity $\gamma$ on $\H^d$ and its restriction to the ball of radius $R$ around $o$, and abbreviate similarly $\ell(\cdot, \eta)$  for the distance to the radial nearest neighbour. 

To formulate the precise variance {asymptotics} of the {edge-length functionals,} we need to consider a directed variant of the hyperbolic radial spanning {tree,} the so-called \emph{hyperbolic directed spanning forest}, which we describe next. This model was introduced in the Euclidean case in \cite{BB}, and was later generalised to hyperbolic space in \cite{Flammant}, see also \cite{CFT1} for its connection to the hyperbolic radial spanning tree. To define it, we fix an ideal point $e \in\partial \H^d$, and for a point {$y \in\H^d$ denote by $\HB_e(y)$} the horoball around $e$ with boundary passing through $y$. The directed spanning forest {$\DSF_e(\eta)$} based on the Poisson process {$\eta$} with respect to the ideal point $e$ {has the points of $\eta$ as vertices} and each vertex $y$ is connected to its (a.s.\ unique) nearest neighbour in the horoball $\HB_e(y)$. We denote by $\ell_e(y)$ the distance between $y$ and its nearest neighbour in {$\DSF_e(\eta + \delta_y)$,} and by $\ell_e^{(x)}(y)$ the distance from $y$ to its nearest neighbour in {$\DSF_e(\eta + \delta_y+\delta_x)$ for $x\in \H^d$.} With this notation the asymptotic variance constant takes the following form, {which proves the first half of Theorem \ref{thm:ExpecationVariance} (b)}.

\begin{prop}\label{prop:var_limit}
For $\alpha>0$ consider the edge-length functional $\mathcal{L}_{R,\gamma,1}^{(\alpha)}$ of the radial spanning tree $\RST(\eta)$ based on a stationary Poisson process $\eta$ on $\H^d$ with intensity $\gamma>0$.	Then the limit
\begin{align*}
	\VV_{\gamma,1}^{(\alpha)} & {:=} \lim\limits_{R\to\infty}\frac{\Var[\mathcal{L}_{R,\gamma,1}^{(\alpha)}]}{{V_1(R)}}
\end{align*}
exists and is equal to
\begin{align*}
& \gamma\E[\ell_{e}(o)^{2\alpha}] + 2\gamma^2 \int_{{\rm HB}_e(o)} \E[\ell_{e}^{(y)}(o)^{\alpha}\ell_{e}^{(o)}(y)^{\alpha}] - \E[\ell_{e}(o)^{\alpha}]\E[\ell_{e}(y)^{\alpha}] \, \mathcal{H}_1^d(\mathrm{d}y)
\end{align*}	
where $e \in \partial \H^d$ is an arbitrary ideal point.
\end{prop}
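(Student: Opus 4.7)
The plan is to apply the Mecke equations to decompose the variance into a diagonal and a covariance contribution, and to identify the limits of each via a horoball approximation of large hyperbolic balls. The key geometric input is the following. Given a point $x$ at distance $r$ from $o$, pick an isometry $\phi_x : \H^d \to \H^d$ sending $x$ to $o$. Then $\phi_x(B^d(o,r)) = B^d(\phi_x(o),r)$ is a ball of radius $r$ passing through $o$, which, as $r \to \infty$, converges to the horoball $\mathrm{HB}_e(o)$ whose ideal centre $e$ is the limit of $\phi_x(o)$ along the ray from $o$. Since $\eta$ is isometry invariant, this allows us to realise the radial-spanning-tree geometry near $x$ as the directed-spanning-forest geometry near $o$ in the limit. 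By isotropy around $o$, the resulting limit constants are independent of the particular $e$ obtained.

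By the Mecke formulas \eqref{eq:Mecke} and \eqref{eq:Meckemulti}, one writes $\Var[\cL_{R,\gamma,1}^{(\alpha)}] = A_R + B_R$ with
\begin{align*}
A_R &= \gamma \int_{B^d(o,R)} \E[\ell(x,\eta+\delta_x)^{2\alpha}]\,\cH_1^d(\mathrm{d}x),\\
B_R &= \gamma^2 \int_{B^d(o,R)^2} C(x,y)\,\cH_1^d(\mathrm{d}x)\cH_1^d(\mathrm{d}y),
\end{align*}
where $C(x,y) := \E[\ell(x,\eta+\delta_x+\delta_y)^\alpha \ell(y,\eta+\delta_x+\delta_y)^\alpha] - \E[\ell(x,\eta+\delta_x)^\alpha]\E[\ell(y,\eta+\delta_y)^\alpha]$. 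For $A_R$, isometry invariance of $\eta$ implies that $g(r) := \E[\ell(x_r,\eta+\delta_{x_r})^{2\alpha}]$ depends only on $r = d_1(o,x_r)$. Applying $\phi_{x_r}$ together with isometry invariance of $\eta$, the quantity $g(r)$ equals the $2\alpha$-th moment of the distance from $o$ to its nearest neighbour of $\eta$ inside $B^d(\phi_{x_r}(o),r)$, which converges as $r\to\infty$ to $\E[\ell_e(o)^{2\alpha}]$, the uniform integrability being supplied by Lemma~\ref{lem:expectation_ell}(b). Polar integration \eqref{eq:polar_integration} together with L'H\^opital's rule then yields $A_R/V_1(R) \to \gamma\,\E[\ell_e(o)^{2\alpha}]$.

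For $B_R$, the symmetry $C(x,y) = C(y,x)$ reduces the integral to the region $\{d_1(o,y) < d_1(o,x)\}$ with a factor of $2$. Fixing $x$ with $d_1(o,x) = r$ and substituting $z = \phi_x(y)$, the constraint $d_1(o,y) < r$ becomes $z \in B^d(\phi_x(o),r)$. Under $\phi_x$ the first factor of $C$ becomes the distance from $o$ to its nearest neighbour in $B^d(\phi_x(o),r) \cap (\eta + \delta_o + \delta_z) \setminus \{o\}$, and the second factor, since $x \notin B^d(o,d_1(o,y))$ makes $\delta_x$ irrelevant in the definition of $\ell(y,\cdot)$, becomes the distance from $z$ to its nearest neighbour in $B^d(\phi_x(o),d_1(\phi_x(o),z)) \cap (\eta + \delta_z) \setminus \{z\}$. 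As $r \to \infty$, these balls converge to $\mathrm{HB}_e(o)$ and $\mathrm{HB}_e(z) \subset \mathrm{HB}_e(o)$, so the integrand converges pointwise to $\E[\ell_e^{(z)}(o)^\alpha\,\ell_e^{(o)}(z)^\alpha] - \E[\ell_e(o)^\alpha]\,\E[\ell_e(z)^\alpha]$, noting that $\ell_e^{(o)}(z) = \ell_e(z)$ since $o \notin \mathrm{HB}_e(z)$. A further application of L'H\^opital and dominated convergence on the outer integration in $r$ then produces
\[
\frac{B_R}{V_1(R)} \to 2\gamma^2 \int_{\mathrm{HB}_e(o)} \Bigl(\E[\ell_e^{(z)}(o)^\alpha\,\ell_e^{(o)}(z)^\alpha] - \E[\ell_e(o)^\alpha]\,\E[\ell_e(z)^\alpha]\Bigr)\,\cH_1^d(\mathrm{d}z),
\]
and combining with the diagonal contribution yields the claimed formula.

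The main technical obstacle is justifying dominated convergence for the covariance integral after the isometric change of variables: one needs an $R$-independent, $\phi_x$-pullback-integrable bound on $|C(x,y)|$. The essential observation is that $C(x,y)$ vanishes as soon as the addition of $y$ cannot influence $\ell(x,\cdot)$ and the addition of $x$ cannot influence $\ell(y,\cdot)$, which, conditional on a high-probability event, requires $d_1(x,y)$ to be bounded by the typical nearest-neighbour distances at $x$ and $y$. Lemma~\ref{lem:expectation_ell}(b) provides super-exponential tails for these distances through the exponential volume growth $V_1(r) \sim e^{(d-1)r}$. The resulting super-exponential decay of $|C(x,y)|$ in $d_1(x,y)$ dominates the exponential volume growth of the horoball transverse to $o$, thus securing integrability and completing the argument.
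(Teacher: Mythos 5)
Your proposal follows essentially the same route as the paper's proof: decompose the variance via the Mecke equations into the diagonal term and a covariance integral, use polar integration together with l'H\^opital's rule to reduce the normalised limit to the behaviour of the integrand at the boundary, apply isometry invariance to shift the base point to the origin and pass to the horoball limit (Lemma \ref{lem:Intersection}), and secure dominated convergence through the exponential decorrelation bound. The only cosmetic differences are that the paper evaluates the diagonal contribution by matching the explicit integral formula from Theorem \ref{thm:ExpecationVariance}(a) with the one for $\E[\ell_e(o)^{2\alpha}]$ rather than arguing convergence of $g(r)$ directly, and that it isolates the covariance majorant as a separate result (Lemma \ref{lem:majorant}) which you sketch but do not prove; your outline of that bound, using independence of separated regions plus the super-exponential tails from Lemma \ref{lem:expectation_ell}(b), is precisely the paper's argument.
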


For the Euclidean case the asymptotic variance constant was derived in \cite[Lemma 3.4]{ST}. By rewriting in that formula the integral over $\mathbb{R}^d$ as two times the integral over a half-space with the origin in its boundary, one obtains the same expression as in Proposition \ref{prop:var_limit} with an integral over a half-space instead of over a horoball. This is due to the fact that in both the proof of Proposition \ref{prop:var_limit} below and the proof for the Euclidean case in \cite{ST} one has to deal with increasing balls with centres tending to infinity, which converge to a horoball in the hyperbolic case and a half-space in the Euclidean case. In this context, for the variance asymptotics of geometric functionals associated with Boolean models stronger differences between Euclidean and hyperbolic space were observed in \cite{HLS24}. These are caused by boundary effects, which are negligible in Euclidean case but become significant in hyperbolic space. Since we do not deal with edges of the radial spanning tree crossing the boundary of the observation window, such effects are not present in the current paper. However, we expect similar phenomena to occur if we take such edges into account.

We begin by introducing some further notation. First, for a point $p \in \H^d$ we consider the radial spanning tree $\RST_p(\eta)$ build on $\eta$ with base point $p$, i.e.\ $p$ plays now the role of the origin and becomes the root of $\RST_p(\eta)$. We denote by $\ell_p(x)$ the distance from {$x \in \mathbb{H}^d$} to its radial nearest neighbour {(with respect to $p$)} in {$\RST_p(\eta+\delta_x)$}, and by $\ell_p^{(y)}(x)$ the same distance in {$\RST_p(\eta+\delta_x+\delta_y)$ with $y \in \H^d$.} Observe that by the isometry-invariance of $\eta$, for any hyperbolic isometry $\varrho$ and any $x,y \in \H^d$ one has the equalities in law
\begin{equation}\label{eq:ell_invariance}
\ell(x)\overset{d}{=}\ell_{\varrho(o)}(\varrho(x))\qquad\text{and}\qquad
\big(\ell^{(y)}(x),\ell^{(x)}(y)\big) \overset{d}{=} \big( \ell_{{\varrho} (o)}^{(\varrho(y))}({\varrho} (x)) ,\ell_{{\varrho} (o)}^{({\varrho} (x))}({\varrho} (y))\big).
\end{equation}
{For $p,x,y\in\H^d$, the inequality $\ell_{p}^{(y)}(x)\leq \ell_{p}(x)$ is obvious since adding the point $y$ can only decrease the distance from $x$ to its radial nearest neighbour with respect to $p$. Thus, it follows from Lemma \ref{lem:expectation_ell} (b) that
\begin{equation}\label{eqn:uniformly_bounded_moments}
\sup_{p,x\in\H^d} \E[\ell_p(x)^\alpha] <\infty \quad \text{and} \quad \sup_{p,x,y\in\H^d} \E[\ell_p^{(y)}(x)^\alpha] <\infty
\end{equation}
for all $\alpha>0$.} 
Moreover, if ${\tau}:[0,\infty)\to\H^d$ is a geodesic ray tending towards the ideal point $e = {\tau}({\infty}) \in \partial\H^d$, it holds for $x,y\in\H^d$ that
\begin{equation}\label{eq:ell_limit}
\ell_{{\tau}(t)}(x) \xrightarrow[t\to\infty]{}\ell_e(x) \quad \text{and}\quad \ell_{{\tau}(t)}^{(y)}(x) \xrightarrow[t\to\infty]{}\ell_e^{(y)} (x) \quad \text{a.s.}
\end{equation}
We will need the following technical result.

\begin{lemma}\label{lem:majorant} \label{lemma:var_asymptotic}
For every $\alpha>0$ there exists a constant $C(\alpha)>0$ only depending on $\alpha$ and $d$ such that for all $p,z_1,z_2\in \H^d$,
	\begin{align*}
	&\big\lvert \mathbb{E}[\ell_{p}^{(z_2)}(z_1)^{\alpha}\ell_{p}^{(z_1)}(z_2)^{\alpha}]-\mathbb{E}[\ell_{p}(z_1)^{\alpha}]\mathbb{E}[\ell_{p}(z_2)^{\alpha}]\big\rvert\leq C(\alpha) \exp\Big(-{\gamma\over 2}V_1\Big({d_1(z_1,z_2)\over 4}\Big) \Big).
	\end{align*}
\end{lemma}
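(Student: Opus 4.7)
The plan is to use that for $z_1$ and $z_2$ at distance $d:=d_1(z_1,z_2)$, the radial nearest neighbours of $z_1$ and $z_2$ are, with overwhelming probability, determined by the Poisson process inside the two disjoint open balls $B(z_1,d/2)$ and $B(z_2,d/2)$, and to exploit this localisation to decouple the two nearest-neighbour distances. Set $r:=d/2$ and introduce the localisation events $E_i:=\{\ell_p(z_i)<r\}$ for $i=1,2$. I would first verify three facts simultaneously on $E_i$: (a) any $y\in\eta\cup\{p\}\setminus\{z_i\}$ with $d_1(p,y)<d_1(p,z_i)$ and $d_1(z_i,y)<\ell_p(z_i)<r$ automatically lies in $B(z_i,r)$, so the minimisation defining the radial nearest neighbour of $z_i$ may be honestly restricted to $(\eta\cap B(z_i,r))\cup\{p\}$, and hence $\ell_p(z_i)\mathbbm{1}_{E_i}$ is a measurable function of $\eta\cap B(z_i,r)$; (b) the indicator $\mathbbm{1}_{E_i}$ itself depends only on $\eta\cap B(z_i,r)$, because $E_i$ is the union of the deterministic event $\{d_1(p,z_i)<r\}$ with $\{\eta\cap B(z_i,r)\cap B(p,d_1(p,z_i))\neq\emptyset\}$; and (c) adding the point $z_{3-i}$, at distance $2r>r>\ell_p(z_i)$ from $z_i$, cannot change the radial nearest neighbour, so $\ell_p^{(z_{3-i})}(z_i)=\ell_p(z_i)$ on $E_i$. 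The triangle inequality forces $B(z_1,r)\cap B(z_2,r)=\emptyset$, so the Poisson restrictions to the two balls are independent; writing $X_i:=\ell_p(z_i)^{\alpha}$ this yields the key factorisation
\[
\mathbb{E}\bigl[\ell_p^{(z_2)}(z_1)^{\alpha}\ell_p^{(z_1)}(z_2)^{\alpha}\mathbbm{1}_{E_1\cap E_2}\bigr] = \mathbb{E}\bigl[X_1\mathbbm{1}_{E_1}\bigr]\,\mathbb{E}\bigl[X_2\mathbbm{1}_{E_2}\bigr].
\]

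Expanding $\mathbb{E}[X_1]\mathbb{E}[X_2]$ via $X_i=X_i\mathbbm{1}_{E_i}+X_i\mathbbm{1}_{E_i^c}$ and subtracting, the quantity to be bounded reduces to the sum of the residual term $\mathbb{E}[\ell_p^{(z_2)}(z_1)^{\alpha}\ell_p^{(z_1)}(z_2)^{\alpha}\mathbbm{1}_{(E_1\cap E_2)^c}]$ and three "cross terms" of the shape $\mathbb{E}[X_1\mathbbm{1}_{E_1^{\sharp}}]\,\mathbb{E}[X_2\mathbbm{1}_{E_2^{\flat}}]$ with at least one of $\sharp,\flat$ equal to $c$. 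The crucial decay estimate is
\[
\mathbb{P}(E_i^c) \leq \exp\bigl(-\gamma V_1(r/2)\bigr) = \exp\bigl(-\gamma V_1(d/4)\bigr),
\]
which follows from the intersection bound $\mathcal{H}^d_1\bigl(B(z_i,r)\cap B(p,d_1(p,z_i))\bigr)\ge V_1(r/2)$ already established inside the proof of Lemma~\ref{lem:expectation_ell} (with the edge case $d_1(p,z_i)<r$ making $E_i$ hold deterministically). Combining this with Cauchy--Schwarz and the moment bound $\mathbb{E}[\ell_p(z)^{\beta}]\lesssim \gamma^{-\beta/d}$ from Lemma~\ref{lem:expectation_ell}(b) applied with $t=0$ for the residual term (and directly with $t=r$ for the cross terms, in order to harvest a further exponential factor from the same lemma), each of the four error terms is bounded by a constant depending only on $\alpha$ and $d$ times $\exp(-\gamma V_1(d/4)/2)$, from which the claim follows.

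The main obstacle is the measurability claim (a): it is not immediately obvious that on the event $E_i$ the random variable $\ell_p(z_i)$ is a function of $\eta\cap B(z_i,r)$ alone, because a priori the definition of the radial nearest neighbour involves a minimisation over the whole Poisson process. The key point is that any point of $\eta\cup\{p\}$ that could possibly win this minimisation must itself lie in $B(z_i,r)$ once we are on $E_i$, so the search can be honestly confined to this ball. Once this $\sigma$-algebra bookkeeping is in place, the remainder of the argument is a formal combination of independence, Cauchy--Schwarz, and the moment estimates of Lemma~\ref{lem:expectation_ell}.
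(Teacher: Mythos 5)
Your proof is correct and follows the same strategy as the paper: localize the radial nearest-neighbour computations for $z_1,z_2$ inside the disjoint balls of radius $d_1(z_1,z_2)/2$, exploit Poisson independence across disjoint regions to factorise the mixed expectation on the good event, and bound the residual contributions via Cauchy--Schwarz together with the uniform moment bounds coming from Lemma~\ref{lem:expectation_ell}. The only variation is in the choice of localization event: the paper works with $A_i = \{\eta(B^d(\hat z_i, d_1(z_1,z_2)/4)) = 0\}$ for a suitably placed auxiliary point $\hat z_i$ on the geodesic from $z_i$ towards $p$ (so that $\PP(A_i)$ is exactly an exponential of the ball volume), while you use the intrinsic event $E_i = \{\ell_p(z_i) < d_1(z_1,z_2)/2\}$ and bound $\PP(E_i^c)$ via the intersection-volume estimate \eqref{eqn:intersection_balls}; since $A_i^c \subseteq E_i$ and both complements have probability at most $\exp(-\gamma V_1(d_1(z_1,z_2)/4))$, the two choices lead to the same bound through identical subsequent algebra, and both require exactly the measurability bookkeeping you identify.
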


We postpone the proof of Lemma \ref{lem:majorant}, and first present the proof of Proposition \ref{prop:var_limit}.

\begin{proof}[Proof of Proposition \ref{prop:var_limit}]
	Due to the multivariate Mecke equation \eqref{eq:Meckemulti}, the variance is given by
	\begin{align*}
	\Var[\mathcal{L}_{R,\gamma,1}^{(\alpha)}]&=\E\bigg[\sum_{(x,y)\in\eta_{R,\neq}^2}\ell(x,{\eta})^{\alpha}\ell(y,{\eta})^{\alpha}\bigg]-\left(\E\bigg[\sum_{x\in\eta_R}\ell(x,{\eta})^{\alpha}\bigg]\right)^2+\E\bigg[\sum_{x\in\eta_R}\ell(x,{\eta})^{2\alpha}\bigg]\\
	&= T(R) + \E [\cL_{R,\gamma,1}^{(2\alpha)}],
	\end{align*}
	where
	\begin{align*}
		T(R) & := \gamma^2\int_{B^d(o,R)}\int_{B^d(o,R)} \E[\ell^{(y)}(x)^{\alpha}\ell^{(x)}(y)^{\alpha}]-\E[\ell(x)^{\alpha}]\E[\ell(y)^{\alpha}] \;{\mathcal{H}^d_1}(\mathrm{d}y)\;{\mathcal{H}_1^d}(\mathrm{d}x) .
	\end{align*}
	By Theorem \ref{thm:ExpecationVariance} (a) we have
	\begin{align*}
	\lim\limits_{R\to\infty}\frac{\E [\cL_{R,\gamma,1}^{(2\alpha)}]}{{V_1(R)}}=\gamma\int_0^\infty e^{-\gamma {G(u^{1/(2\alpha)})}}\,\mathrm{d}u.
	\end{align*}
	On the other hand, a computation as in the proof of Lemma \ref{lem:expectation_ell} (a) shows
	\begin{equation*}
	\E [\ell_e(o)^{2\alpha}] = \int_{0}^{\infty} \PP (\ell_e(o) \geq u^{1/(2\alpha)}) \,\dint u = \int_{0}^\infty \exp \big(-\gamma {\cH^{d}_1}(\HB_e(o) \cap {B^d}(o,u^{1/(2\alpha)})) \big) \,\dint u.
	\end{equation*}
	Finally, Lemma \ref{lem:Intersection} implies that {$\cH^{d}_1(\HB_e(o) \cap B^d(o,u^{1/(2\alpha)})) = G(u^{1/(2\alpha)})$} and hence
	\begin{equation}\label{eq:lim_T2}
	\lim\limits_{R\to\infty}\frac{\E [\cL_{R,\gamma,1}^{(2\alpha)}]}{{V_1(R)}} = \gamma \E [\ell_e(o)^{2\alpha}].
	\end{equation}
	For the term $T(R)$ we write, by symmetry and the polar integration formula \eqref{eq:polar_integration},
\begin{align*}
T(R) & = 2 \gamma^2 \int_{B^d(o,R)} \int_{B^d(o,{d_1(o,x)})} \E[\ell^{(y)}(x)^{\alpha}\ell^{(x)}(y)^{\alpha}]-\E[\ell(x)^{\alpha}]\E[\ell(y)^{\alpha}] \;{\mathcal{H}_1^d}(\mathrm{d}y)\;{\mathcal{H}^d_1}(\mathrm{d}x) \\
& = 2 \gamma^2 \omega_d \int_0^R \sinh^{d-1}(s) \int_{B^d(o,s)} \E[\ell^{(y)}(x_s)^{\alpha}\ell^{(x_s)}(y)^{\alpha}]-\E[\ell(x_s)^{\alpha}]\E[\ell(y)^{\alpha}] \;{\mathcal{H}_1^d}(\mathrm{d}y) \;\mathrm{d}s,
\end{align*}	
where $x_s$ is the point on the geodesic ray in $\mathbb{H}^d$ from $o$ tending towards the ideal point $e\in\partial\mathbb{H}^d$ such that $d_1(x_s,o)=s$ for $s\in[0,\infty)$. From \eqref{eq:volume_ball} and l'Hospital's rule we deduce
\begin{align*}
& \lim_{R\to\infty} \frac{T(R)}{V_1(R)} \\
&	= \lim_{R\to\infty} \frac{2 \gamma^2 \omega_d \sinh^{d-1}(R)}{\omega_d \sinh^{d-1}(R)} \int_{B^d(o,R)} \E[\ell^{(y)}(x_R)^{\alpha}\ell^{(x_R)}(y)^{\alpha}]-\E[\ell(x_R)^{\alpha}]\E[\ell(y)^{\alpha}] \; \mathcal{H}_1^d(\mathrm{d}y) \\
&	= \lim_{R\to\infty} 2 \gamma^2 \int_{B^d(o,R)} \E[\ell^{(y)}(x_R)^{\alpha}\ell^{(x_R)}(y)^{\alpha}]-\E[\ell(x_R)^{\alpha}]\E[\ell(y)^{\alpha}] \; \mathcal{H}_1^d(\mathrm{d}y).
\end{align*}
Using \eqref{eq:ell_invariance} to change the roles of $x_R$ and $o$, this can be rewritten as
\begin{align}
&	\lim_{R\to\infty} 2 \gamma^2 \int_{B^d(x_R,R)} \E[\ell^{(y)}_{x_R}(o)^{\alpha}\ell^{(o)}_{x_R}(y)^{\alpha}]-\E[\ell_{x_R}(o)^{\alpha}]\E[\ell_{x_R}(y)^{\alpha}] \;\mathcal{H}_1^d(\mathrm{d}y) \notag \\
&	= \lim_{R\to\infty} 2 \gamma^2 \int_{\mathbb{H}^d} \mathbbm{1}\{y\in B^d(x_R,R)\} \big(\E[\ell^{(y)}_{x_R}(o)^{\alpha}\ell^{(o)}_{x_R}(y)^{\alpha}]-\E[\ell_{x_R}(o)^{\alpha}]\E[\ell_{x_R}(y)^{\alpha}]\big) \;\mathcal{H}_1^d(\mathrm{d}y). \label{eqn:limit_T_V}
\end{align}
By Lemma \ref{lem:majorant}, the absolute value of the integrand is bounded by $C(\alpha) \exp\big(-{\gamma\over 2}V_1\big({d_1(o,y)\over 4}\big) \big)$ for $y\in\mathbb{H}^d$. Since, by a further application of \eqref{eq:polar_integration},
$$
\int_{\H^d} \exp\Big(-{\gamma\over 2}V_1\Big({d_1(o,y)\over 4}\Big) \Big) {\;\cH^d_1}(\dint y) = \omega_d \int_{0}^{\infty} \sinh^{d-1}(s) e^{-\gamma V_1({s/4})/2}\,\dint s < \infty,
$$
we may apply the dominated convergence theorem in \eqref{eqn:limit_T_V}. {Combining \eqref{eqn:uniformly_bounded_moments} with \eqref{eq:ell_limit}, we deduce, for $y\in\mathbb{H}^d$,} 
	\begin{align*}
	\E[\ell_{x_R}(o)^{\alpha}]\E[\ell_{x_R}(y)^{\alpha}] &\xrightarrow[R\to\infty]{} \E[\ell_{e}(o)^{\alpha}]\E[\ell_{e}(y)^{\alpha}] \\ \intertext{and}
    \E[\ell_{x_R}^{(z)}(o)^{\alpha}\ell_{x_R}^{(o)}(y)^{\alpha}]&\xrightarrow[R\to\infty]{} \E[\ell_{e}^{(y)}(o)^{\alpha}\ell_{e}^{(o)}(z)^{\alpha}].
    \end{align*}
Moreover, we have
$$
\mathbbm{1}\{y\in B^d(x_R,R)\} \xrightarrow[R\to\infty]{} \mathbbm{1}\{y\in {\rm HB}_e(o)\} 
$$
for $\mathcal{H}_1^d$-a.e.\ $y\in\mathbb{H}^d$. Altogether, we obtain
$$
\lim_{R\to\infty} \frac{T(R)}{V_1(R)} = 2\gamma^2 \int_{{\rm HB}_e(o)} \E[\ell_{e}^{(y)}(o)^{\alpha}\ell_{e}^{(o)}(y)^{\alpha}] - \E[\ell_{e}(o)^{\alpha}]\E[\ell_{e}(y)^{\alpha}] \, \mathcal{H}_1^d(\mathrm{d}y).
$$
Combining this with \eqref{eq:lim_T2} completes the proof.
\end{proof}

Finally, we return to the proof of the technical Lemma \ref{lem:majorant}.

\begin{proof}[Proof of Lemma \ref{lem:majorant}]
For $i\in\{1,2\}$ define the event
	\begin{align*}
		A_i:=\left\{\eta\left(B^d\left(\hat{z}_i,\frac{{d_1}(z_1,z_2)}{4}\right)\right)=0\right\}
	\end{align*}
	with $\hat{z}_i\in\mathbb{H}^d$	being the point on the geodesic ray from $z_i$ in the direction of $p$ such that $d_1(\hat{z}_i,z_i)=\frac{d_1(z_1,z_2)}{4}$. We note that in case of $A_1^c$, $\ell_p^{(z_2)}(z_1)$ is completely determined by the points of $\eta$ within $B^d(z_1,\frac{d_1(z_1,z_2)}{2})$ since there are potential radial nearest neighbours of $z_1$ in the latter ball. Indeed, for $p\notin B^d(\hat{z}_1,d_1(z_1,z_2)/4)$ the points of $\eta$ in $B^d(\hat{z}_1,d_1(z_1,z_2)/4)$ are closer to $p$ than $z_1$ and, thus, potential radial nearest neighbours of $z_1$, while for $p\in B^d(\hat{z}_1,d_1(z_1,z_2)/4)$, $p$ is a potential radial nearest neighbour of $z_1$. Analogously, $\ell_p^{(z_1)}(z_2)$ is in the case of $A_2^c$ completely determined by the points of $\eta$ in $B^d(z_2,\frac{d_1(z_1,z_2)}{2})$. Due to the independence of the events $A_1$ and $A_2$ by the defining property (ii) of a Poisson process we have for $A:=A_1\cup A_2$ that
	\begin{align*}
		&\E[\ell_p^{(z_2)}(z_1)^{\alpha}\ell_p^{(z_1)}(z_2)^{\alpha}\mathbbm{1}_{A^c}]=\E[\ell_p(z_1)^{\alpha}\mathbbm{1}_{A_1^c}]\E[\ell_p(z_2)^{\alpha}\mathbbm{1}_{A_2^c}].
	\end{align*}
	{It} follows together with the Cauchy-Schwarz inequality and {\eqref{eqn:uniformly_bounded_moments}} that
	\begin{align*}
		&\vert\E[\ell_p^{(z_2)}(z_1)^{\alpha}\ell_p^{(z_1)}(z_2)^{\alpha}]-\E[\ell_p(z_1)^{\alpha}]\E[\ell_p(z_2)^{\alpha}]\rvert\\
		&=\vert\E[\ell_p^{(z_2)}(z_1)^{\alpha}\ell_p^{(z_1)}(z_2)^{\alpha}(\mathbbm{1}_A+\mathbbm{1}_{A^c})]-\E[\ell_p(z_1)^{\alpha}(\mathbbm{1}_{A_1}+\mathbbm{1}_{A_1^c})]\E[\ell_p(z_2)^{\alpha}(\mathbbm{1}_{A_2}+\mathbbm{1}_{A_2^c})]\rvert\\
		&\leq \E[\ell_p^{(z_2)}(z_1)^{\alpha}\ell_p^{(z_1)}(z_2)^{\alpha}\mathbbm{1}_A]+\E[\ell_p(z_1)^{\alpha}\mathbbm{1}_{A_1}]\E[\ell_p(z_2)^{\alpha}\mathbbm{1}_{A_2}]\\
		&\quad+\E[\ell_p(z_1)^{\alpha}\mathbbm{1}_{A_1}]\E[\ell_p(z_2)^{\alpha}\mathbbm{1}_{A_2^c}]+\E[\ell_p(z_1)^{\alpha}\mathbbm{1}_{A_1^c}]\E[\ell_p(z_2)^{\alpha}\mathbbm{1}_{A_2}]\\
		&\leq \E[\ell_p^{(z_2)}(z_1)^{4\alpha}]^{1/4} \E[\ell_p^{(z_1)}(z_2)^{4\alpha}]^{1/4}\mathbb{P}(A)^{1/2}+\E[\ell_p(z_1)^{\alpha}\mathbbm{1}_{A_1}]\E[\ell_p(z_2)^{\alpha}]\\&\quad+\E[\ell_p(z_2)^{\alpha}\mathbbm{1}_{A_2}]\E[\ell_p(z_1)^{\alpha}]\\
		&\leq \E[\ell_p^{(z_2)}(z_1)^{4\alpha}]^{1/4} \E[\ell_p^{(z_1)}(z_2)^{4\alpha}]^{1/4}\mathbb{P}(A)^{1/2}+\E[\ell_p(z_1)^{2\alpha}]^{1/2}\E[\ell_p(z_2)^{\alpha}]\mathbb{P}(A_1)^{1/2}\\&\quad+\E[\ell_p(z_2)^{2\alpha}]^{1/2}\E[\ell_p(z_1)^{\alpha}]\mathbb{P}(A_2)^{1/2}\\
		&\leq C_1(\alpha)(\mathbb{P}(A)^{1/2}+\mathbb{P}(A_1)^{1/2}+\mathbb{P}(A_2)^{1/2})\\
		&\leq C_2(\alpha)(\mathbb{P}(A_1)^{1/2}+\mathbb{P}(A_2)^{1/2})\\
		&= 2C_2(\alpha)\exp\Big(-{\gamma\over 2}\mathcal{H}^d_1\Big(B^d\Big(o,\frac{d_1(z_1,z_2)}{4}\Big)\Big)\Big) = 2C_2(\alpha)\exp\Big(-{\gamma\over 2}V_1\Big({d_1(z_1,z_2)\over 4}\Big) \Big)
	\end{align*}
	for suitable constants $C_1(\alpha),C_2(\alpha)>0$ only depending on $\alpha$ and $d$, which completes the proof.
\end{proof}

\section{Central limit theorem and variance bounds for $\cL_{R,\gamma,\kk}^{(\alpha)}$}\label{sec:CLT}

In this section we establish the central limit theorem for $\mathcal{L}_{R,\gamma,\kk}^{(\alpha)}$ (see Theorem \ref{thm:CLT}) as well as the positivity of the asymptotic variance, proving the second statement in Theorem \ref{thm:ExpecationVariance} (b). We note that it suffices to consider the case of $\kk=1$ since, by the scaling relation in Proposition \ref{prop:Scaling} with $\lambda=1/\sqrt{\varkappa}$ and \eqref{eq:Vk-rescale},
$$
d_\diamondsuit\left(\frac{\cL_{R,\gamma,\kk}^{(\alpha)}-\E[\cL_{R,\gamma,\kk}^{(\alpha)}]}{\sqrt{\Var[\cL_{R,\gamma,\kk}^{(\alpha)}]}},N\right)=d_\diamondsuit\left(\frac{\cL_{\sqrt{\kk}R,\kk^{-d/2}\gamma,1}^{(\alpha)}-\E[\cL_{\sqrt{\kk}R,\kk^{-d/2}\gamma,1}^{(\alpha)}]}{\sqrt{\Var[\cL_{\sqrt{\kk}R,\kk^{-d/2}\gamma,1}^{(\alpha)}]}},N\right)
$$
for $\diamondsuit\in\{W,K\}$ and
$$
\sqrt{\kk^{-d/2}\gamma V_1(\sqrt{\kk}R)} = \sqrt{\gamma V_\kk(R)}.
$$
Therefore, we continue to restrict ourselves to this case, and write $d_1$, $\cH^d_1$, $V_1$, $\H^d$ and $\ell$ throughout. Observe, for the proof of the Central Limit Theorem \ref{thm:CLT}, that by the scaling relations given by Proposition \ref{prop:Scaling}, the assumption $\varkappa^{-d/2}\gamma\geq c_0$ reduces in this case to simply $\gamma \geq c_0$.

Let $\alpha>0$ and $c_0>0$ be fixed in the following. We first state four technical lemmas, whose proofs we defer to the end of this section. 
Lemma \ref{lem:bounded_moments} and Lemma \ref{lem:prop_seconddiff_neq_0} are derived analogously to Lemma 4.1 and Lemma 4.2 in \cite{ST} with the help of Lemma \ref{lem:help_integral}.

\begin{lemma}
	\label{lem:help_integral}
	For all $c_1,c_2>0$ there exist constants $\tilde{c}_1,\tilde{c}_2>0$ only depending on $c_1, c_2$, $d$ and $c_0$ such that for $\gamma\geq c_0$ and $s\geq0$,
	\begin{align*}
		\int_{\H^d\backslash B^d(o,s)}\gamma e^{-\gamma c_1{V_1}(\frac{{d_1}(x,o)}{c_2})}\;{\mathcal{H}^d_1}(\mathrm{d}x)\leq \tilde{c}_1e^{-\gamma \tilde{c}_2{V_1}(\frac{s}{c_2})}.
	\end{align*}
\end{lemma}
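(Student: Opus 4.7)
The strategy is to reduce the integral to a one-dimensional radial integral via hyperbolic polar coordinates, use monotonicity of $V_1$ to factor out the desired exponential decay, and then verify that what remains is uniformly bounded for $\gamma \geq c_0$.

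First, I would apply the polar integration formula \eqref{eq:polar_integration} with $\kk=1$ to write the integral as $\gamma \omega_d \int_s^\infty \sinh^{d-1}(r) e^{-\gamma c_1 V_1(r/c_2)}\,\dint r$, and change variables via $u = r/c_2$ to obtain $\gamma \omega_d c_2 \int_{s/c_2}^\infty \sinh^{d-1}(c_2 u) e^{-\gamma c_1 V_1(u)}\,\dint u$. Setting $\tilde{c}_2 := c_1/2$, the monotonicity of $V_1$ yields $e^{-\gamma c_1 V_1(u)} \leq e^{-\gamma \tilde{c}_2 V_1(s/c_2)} \cdot e^{-\gamma \tilde{c}_2 V_1(u)}$ for all $u \geq s/c_2$. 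Pulling out the first factor and extending the integration to $[0,\infty)$, the problem reduces to showing that
\[
J(\gamma) \;:=\; \gamma \int_0^\infty \sinh^{d-1}(c_2 u)\, e^{-\gamma \tilde{c}_2 V_1(u)}\,\dint u
\]
is bounded above by a constant depending only on $c_1, c_2, c_0, d$ for all $\gamma \geq c_0$.

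The bound on $J(\gamma)$ is then obtained by splitting the integration at $u=1$. On $[0,1]$, the elementary estimates $\sinh(c_2 u) \leq (c_2 \cosh c_2)\, u$ and $V_1(u) \geq \kappa_d u^d$, together with the substitution $v = \gamma \tilde{c}_2 \kappa_d u^d$, reduce the contribution to a constant multiple of $\int_0^\infty e^{-v}\,\dint v$, which is $\gamma$-independent. On $[1,\infty)$, I would use $\sinh(c_2 u) \leq e^{c_2 u}/2$ together with an exponential lower bound of the form $V_1(u) \geq C_d\, e^{(d-1)u}$ (valid for $u \geq 1$ with $C_d > 0$ depending only on $d$, obtained by comparing $\sinh^{d-1}(t)$ with $4^{-(d-1)} e^{(d-1)t}$ on the range $t \geq 1$), and then substitute $z = \gamma \tilde{c}_2 C_d e^{(d-1)u}$ to reduce the integrand to $z^{c_2 - 1} e^{-z}$ up to explicit constants and a power of $\gamma$. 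The resulting tail integral is dominated by $\Gamma(c_2)$, and the power of $\gamma$ is harmless since $\gamma \geq c_0$, so the contribution is bounded uniformly.

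The main technical point is the large-$u$ regime in the case $c_2 \geq 1$, where $\sinh^{d-1}(c_2 u)$ itself grows exponentially with rate strictly larger than that of $V_1(u)$. The double-exponential decay $\exp(-\gamma \tilde{c}_2 C_d e^{(d-1)u})$ dominates any single-exponential growth uniformly in $\gamma \geq c_0$, as is made precise by the substitution above, yielding the required uniform bound and completing the proof.
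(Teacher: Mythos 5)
Your overall approach is correct and takes a genuinely different route from the paper's. Both proofs first pass to polar coordinates; the key divergence is in how the growth of $\sinh^{d-1}(r)$ is absorbed. The paper picks a cutoff $c\geq c_2\sinh^{-1}(1)$ so that for $r\geq c$ one has both $\sinh^{d-1}(r)\leq e^{(d-1)r}\sinh^{d-1}(r/c_2)$ and $\gamma c_1V_1(r/c_2)-(d-1)r\geq C\gamma V_1(r/c_2)$, and then exploits the exact identity $\omega_d\sinh^{d-1}(u)=V_1'(u)$ to integrate in closed form, producing $\frac{c_2}{C}e^{-\gamma CV_1(s/c_2)}$ directly; the case $s\in[0,c]$ is treated separately by a compactness argument. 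You instead split off the factor $e^{-\gamma\tilde c_2V_1(s/c_2)}$ by monotonicity of $V_1$ with $\tilde c_2=c_1/2$, extend the integral to $[0,\infty)$, and then bound $J(\gamma)$ explicitly via elementary polynomial and exponential comparisons (Euclidean ball bound near $0$ and an exponential lower bound on $V_1$ at infinity, reducing to gamma-function tails). This avoids the paper's clever exact integration but requires a splitting of $J$ at $u=1$ and a little more bookkeeping with the substitution. The two approaches are both valid; the paper's is shorter and gives a clean closed form, while yours is more hands-on and makes the uniform boundedness in $\gamma$ very transparent.

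One loose end in your write-up: in the regime $[1,\infty)$ the substitution gives a prefactor $\gamma^{1-c_2}$ times the incomplete gamma $\Gamma(c_2,\gamma\tilde c_2 C_d e^{d-1})$. Your statement ``the tail integral is dominated by $\Gamma(c_2)$, and the power of $\gamma$ is harmless'' closes the argument only for $c_2\geq 1$, where $\gamma^{1-c_2}\leq c_0^{1-c_2}$. For $c_2<1$ the factor $\gamma^{1-c_2}$ grows, so bounding the tail by the full $\Gamma(c_2)$ is too crude; one either needs to invoke the exponential decay $\Gamma(c_2,x)\leq x^{c_2-1}e^{-x}$ (for $c_2<1$ and $x\geq 1$), which cancels the $\gamma^{1-c_2}$ and leaves an exponentially small quantity, or simply dispose of the case $c_2<1$ at the outset via $\sinh(c_2u)\leq\sinh(u)=\omega_d^{-1}V_1'(u)$ and an exact integration as in the paper. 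You flag $c_2\geq 1$ as the ``main technical point,'' so you are aware the other case is easier, but as written the $c_2<1$ branch is not actually closed. This is a minor, easily-fixed presentational gap rather than a flaw in the underlying idea.
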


\begin{lemma}
	\label{lem:bounded_moments}
	For $p\in\mathbb{N}$ there exist constants $C_{1,p},C_{2,p}>0$ depending on $p$, $\alpha$, $d$ and $c_0$ such that for all $R>0$, $z,z_1,z_2\in B^d(o,R)$ and $\gamma\geq c_0$,
	\begin{align*}
		\gamma^{\alpha p/d}\E[\lvert D_z\mathcal{L}_{R,\gamma,1}^{(\alpha)}\rvert^p]\leq C_{1,p}\;\;\;\;\text{ and }\;\;\;\;\gamma^{\alpha p/d}\E[\lvert D_{z_1,z_2}^2\mathcal{L}_{R,\gamma,1}^{(\alpha)}\rvert^p]\leq C_{2,p}.
	\end{align*}
\end{lemma}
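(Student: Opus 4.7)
The plan is to compute $D_z\mathcal{L}_{R,\gamma,1}^{(\alpha)}$ and $D^2_{z_1,z_2}\mathcal{L}_{R,\gamma,1}^{(\alpha)}$ explicitly in terms of the radial spanning tree structure, and then reduce the moment bounds to applications of Lemma \ref{lem:expectation_ell}(b) (tail estimate for a single edge) together with Lemma \ref{lem:help_integral} (integration bound). This is the hyperbolic analogue of \cite[Lemma 4.1]{ST}.

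For the first-order difference, adding $z\in B^d(o,R)$ to $\eta$ contributes a new edge of length $\ell(z,\eta+\delta_z)$ and modifies the edge of each $x\in\eta_R$ satisfying both $d_1(o,z)<d_1(o,x)$ and $d_1(x,z)<\ell(x,\eta)$; for such $x$, the new radial nearest neighbour is $z$, and the edge length decreases by at most $\ell(x,\eta)$. This yields the pointwise bound
\begin{align*}
|D_z\mathcal{L}_{R,\gamma,1}^{(\alpha)}|\leq \ell(z,\eta+\delta_z)^\alpha+\sum_{x\in\eta_R}\ell(x,\eta)^\alpha\mathbbm{1}\{d_1(x,z)<\ell(x,\eta)\}.
\end{align*}
Raising to the $p$-th power with $(a+b)^p\leq 2^{p-1}(a^p+b^p)$ and expanding the $p$-th power of the sum, the multivariate Mecke equation \eqref{eq:Meckemulti} produces a finite combination of $k$-fold integrals over $(\H^d)^k$ for $k=1,\ldots,p$. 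H\"older's inequality inside the expectation reduces each integrand to a product of single-point moments of the form $\E[\ell(x_j,\eta+\delta_{x_j})^{\alpha q_j}\mathbbm{1}\{\ell(x_j,\eta+\delta_{x_j})>d_1(x_j,z)\}]$, each of which is bounded by Lemma \ref{lem:expectation_ell}(b) applied with $t=d_1(x_j,z)$ by a constant times $\gamma^{-\alpha q_j/d}\exp(-\gamma V_1(d_1(x_j,z)/2)/2)$. The Mecke factor $\gamma$ combines with the exponential tail to leave an integrable kernel in $x_j$, whose integral is uniformly bounded in $z$ by Lemma \ref{lem:help_integral}. Tallying the powers of $\gamma$ (a factor $\gamma^k$ from Mecke, a factor $\gamma^{-k}$ absorbed by the integrations, and the total $\gamma^{-\alpha p/d}$ from the single-point tail bounds) gives $\gamma^{\alpha p/d}\E[|D_z\mathcal{L}_{R,\gamma,1}^{(\alpha)}|^p]\leq C_{1,p}$; the contribution of $\ell(z,\eta+\delta_z)^\alpha$ is handled directly by Lemma \ref{lem:expectation_ell}(b) with $t=0$.

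For the second-order difference the same strategy applies after a finer case analysis. One enumerates the ways in which adding $z_1$, $z_2$, or both reroutes the radial nearest-neighbour assignments, including the new edges originating at $z_1$ and $z_2$ themselves and their mutual interaction when $d_1(z_1,z_2)$ is small. Each resulting contribution is bounded by a product of a bounded number of edge lengths raised to $\alpha$, multiplied by indicators forcing the associated points to lie within their current radial-edge distances of $z_1$ or $z_2$. Combining H\"older, the multivariate Mecke equation, and Lemmas \ref{lem:expectation_ell}(b) and \ref{lem:help_integral} exactly as above then yields $\gamma^{\alpha p/d}\E[|D^2_{z_1,z_2}\mathcal{L}_{R,\gamma,1}^{(\alpha)}|^p]\leq C_{2,p}$. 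The main obstacle is this combinatorial enumeration for the second-order case: one must carefully verify that every resulting term still carries enough geometric indicators to guarantee the exponential tails required by Lemma \ref{lem:help_integral}. The analytic ingredients themselves carry over without change from the first-order argument.
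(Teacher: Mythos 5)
Your first-order argument is sound but takes a genuinely different route from the paper. Starting from the same pointwise bound
\begin{align*}
|D_z\mathcal{L}_{R,\gamma,1}^{(\alpha)}|\leq \ell(z,\eta+\delta_z)^\alpha+\sum_{x\in\eta_R}\ell(x,\eta)^\alpha\mathbbm{1}\{d_1(x,z)<\ell(x,\eta)\},
\end{align*}
you expand the $p$-th power of the sum into distinct $k$-tuples, apply the multivariate Mecke equation, and then H\"older \emph{directly on the product of length-indicators}, reducing everything to single-point tail moments of $\ell$. The paper instead applies a Cauchy--Schwarz step that separates the \emph{count} $(\sum_x\mathbbm{1}\{\ell(x,\eta)\geq d_1(z,x)\})^{2p}$ from the \emph{max length} $\max_x\ell(x,\eta)^{2\alpha p}$, and bounds each factor separately (factorial moments via Mecke plus H\"older for the count, a tail-integral via Mecke for the max). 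Both routes are valid and both deliver the right $\gamma^{-\alpha p/d}$ scaling. One caveat in your write-up: after Mecke the integrand involves $\ell(x_j,\eta+\sum_i\delta_{x_i})$, and passing to the single-point form $\E[\ell(x_j,\eta+\delta_{x_j})^{\alpha q_j}\mathbbm{1}\{\cdot\}]$ requires the monotonicity $\ell(x_j,\eta+\sum_i\delta_{x_i})\leq\ell(x_j,\eta+\delta_{x_j})$ (and the corresponding shrinking of the indicator). You invoke this implicitly; it should be made explicit, as the paper does.

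For the second-order bound your plan is where the two approaches really diverge, and yours is noticeably heavier. You propose to enumerate all the ways in which inserting $z_1$, $z_2$, or both reroutes radial nearest neighbours, and you acknowledge that ``this combinatorial enumeration'' is ``the main obstacle.'' The paper avoids this entirely by using the identity
\begin{align*}
D^2_{z_1,z_2}\mathcal{L}_{R,\gamma,1}^{(\alpha)} = D_{z_1}\mathcal{L}_{R,\gamma,1}^{(\alpha)}(\eta_R+\delta_{z_2}) - D_{z_1}\mathcal{L}_{R,\gamma,1}^{(\alpha)}(\eta_R),
\end{align*}
so that $|D^2_{z_1,z_2}\mathcal{L}|^p\leq 2^{p-1}\big(|D_{z_1}\mathcal{L}(\eta_R+\delta_{z_2})|^p+|D_{z_1}\mathcal{L}(\eta_R)|^p\big)$, and then compares the extra-point version to the original via the same monotonicity inequalities ($\ell(z_1,\eta+\delta_{z_1}+\delta_{z_2})\leq\ell(z_1,\eta+\delta_{z_1})$, and so on), reducing cleanly to the first-order estimate already obtained. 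Your enumeration plan is not wrong in principle, but you neither carry it out nor show it yields the indicator structure Lemma \ref{lem:help_integral} needs. Adopting the paper's $D^2 = D(D\cdot)$ decomposition would close this gap immediately and let the rest of your first-order machinery carry over verbatim.
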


\begin{lemma}
	\label{lem:prop_seconddiff_neq_0}
	There are constants $c_1,c_2>0$ {depending on $\alpha$, $d$ and $c_0$} such that for all {$R>0$,} $z_1,z_2\in B^d(o, R)$ and $\gamma\geq c_0$,
	\begin{align*}
		\p(D_{z_1,z_2}^2\mathcal{L}_{R,\gamma,1}^{(\alpha)}\neq 0)\leq c_1e^{-\gamma c_2 {V_1(d_1}(z_1,z_2)/{4}) }.
	\end{align*}
\end{lemma}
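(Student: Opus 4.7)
Set $r := d_1(z_1, z_2)/4$. I would decompose the second difference vertex by vertex: writing $\mathcal{L}_{R,\gamma,1}^{(\alpha)} = \sum_{y \in \eta \cap B^d(o,R)} \ell(y, \eta)^\alpha$ and tracking the contributions in the four configurations $\eta$, $\eta+\delta_{z_1}$, $\eta+\delta_{z_2}$, $\eta+\delta_{z_1}+\delta_{z_2}$, one splits $D^2_{z_1,z_2}\mathcal{L}_{R,\gamma,1}^{(\alpha)}$ into a ``new vertex'' term at $z_1$, a symmetric new vertex term at $z_2$, and a sum of terms $\Delta_y := \ell(y, \eta+\delta_{z_1}+\delta_{z_2})^\alpha - \ell(y, \eta+\delta_{z_1})^\alpha - \ell(y, \eta+\delta_{z_2})^\alpha + \ell(y, \eta)^\alpha$ indexed by $y \in \eta \cap B^d(o,R)$. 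The plan is to bound the probability that each of these three contributions is non-zero by an exponential of the stated form and combine via a union bound.

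The new-vertex term at $z_1$ is non-zero only if $z_2$ becomes the radial nearest neighbour of $z_1$ in $\eta+\delta_{z_1}+\delta_{z_2}$, which forces $\ell(z_1, \eta+\delta_{z_1}) > d_1(z_1, z_2) = 4r$ and hence the absence of Poisson points in $B^d(z_1, 4r) \cap B^d(o, d_1(o, z_1))$. Letting $\widehat z_1$ denote the point on the geodesic from $z_1$ towards $o$ at distance $r$ from $z_1$ (the case $d_1(o, z_1) < r$ is trivially absorbed into the constants via the origin itself being a close radial predecessor of $z_1$), the ball $B^d(\widehat z_1, r)$ is contained in this prohibited set and has hyperbolic volume $V_1(r)$. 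Hence this event has probability at most $e^{-\gamma V_1(r)}$. The term at $z_2$ is handled by the symmetric argument.

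For $\Delta_y$, writing $\ell_S := \ell(y, \eta + \sum_{i \in S} \delta_{z_i})$ for $S \subseteq \{1, 2\}$, a short case analysis based on the telescoping $\min$ structure of the $\ell_S$ (only $z_i$ with $d_1(o, z_i) < d_1(o, y)$ and $d_1(y, z_i) < \ell_\emptyset$ can reduce $\ell$) shows that $\Delta_y \ne 0$ requires \emph{both} $z_1$ and $z_2$ to strictly improve the RNN of $y$ in $\eta$; that is, $d_1(o, z_i) < d_1(o, y)$ and $d_1(y, z_i) < \ell(y, \eta)$ for $i = 1, 2$. With $L(y) := \max(d_1(y, z_1), d_1(y, z_2))$, the triangle inequality yields $L(y) \ge d_1(z_1, z_2)/2 = 2r$. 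Combining the Mecke equation \eqref{eq:Mecke} with the Poisson tail bound $\PP(\ell(y, \eta + \delta_y) > L) \le e^{-\gamma V_1(L/2)}$ (a consequence of \eqref{eqn:intersection_balls}), one obtains $\PP(\exists\, y \in \eta \cap B^d(o,R) : \Delta_y \ne 0) \le \gamma \int_{\H^d} e^{-\gamma V_1(L(y)/2)}\,\mathcal{H}_1^d(\dint y)$, which after splitting into the subregions $\{d_1(y, z_1) \le d_1(y, z_2)\}$ and its complement, bounding the maximum by the corresponding $d_1(y, z_i) \ge 2r$, and applying Lemma \ref{lem:help_integral} (with $c_1 = 1$, $c_2 = 2$, $s = 2r$) is at most $\widetilde{c}_1\,e^{-\widetilde{c}_2 \gamma V_1(r)}$ for constants $\widetilde{c}_1, \widetilde{c}_2 > 0$ depending only on $\alpha$, $d$ and $c_0$.

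Summing the three bounds gives the desired estimate. The main subtlety is the case analysis establishing that $\Delta_y \ne 0$ genuinely requires \emph{both} additions to strictly reduce $\ell(y, \cdot)$: scenarios in which only one of $z_1, z_2$ is a potential improvement give $\Delta_y = 0$ due to the cancellation induced by the $\min$ structure. Once this reduction is in hand, the remaining steps consist of standard applications of the Mecke formula, hyperbolic polar integration from Section \ref{sec:background_hyperbolic}, and Lemma \ref{lem:help_integral}.
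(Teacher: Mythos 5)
Your proof is correct and follows essentially the same approach as the paper's: the same decomposition of $D^2_{z_1,z_2}\mathcal{L}_{R,\gamma,1}^{(\alpha)}$ into the two ``new-vertex'' terms $D_{z_i}\ell(z_j,\eta+\delta_{z_j})^\alpha$ plus $\sum_{y\in\eta_R}D^2_{z_1,z_2}\ell(y,\eta)^\alpha$, followed by a union bound, the ball-in-intersection estimate \eqref{eqn:intersection_balls}, the Mecke equation, and Lemma \ref{lem:help_integral}. The only differences are cosmetic (you use a radius-$r$ ball where the paper's \eqref{eqn:intersection_balls} gives radius $2r$, and you spell out the cancellation argument for $\Delta_y$ slightly more explicitly), so the constants differ but the argument is the same.
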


To establish the positivity of the asymptotic variance we additionally need the following result, which shows that the first-order difference operator is non-zero with positive probability.

\begin{lemma}\label{lem:D_lower_bound}
There exist constants {$c,C>0$ depending on $\alpha$, $d$ and $c_0$ such that for all $\gamma\geq c_0$ and $R>3$ one has that}
\begin{equation*}
\PP (D_z \mathcal{L}_{R,\gamma,1}^{(\alpha)} \geq \gamma^{-\alpha/d}c) \geq C
\end{equation*}
{for all $z \in B^d(o,R) \setminus B^d(o,3)$}.
\end{lemma}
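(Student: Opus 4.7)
I begin with the decomposition
\[
D_z \mathcal{L}^{(\alpha)}_{R,\gamma,1} = \ell(z,\eta+\delta_z)^\alpha - S_z,\qquad S_z := \sum_{\substack{x\in\eta_R \\ d_1(o,x)>r \\ d_1(x,z)<\ell(x,\eta)}} \bigl(\ell(x,\eta)^\alpha - d_1(x,z)^\alpha\bigr)\geq 0,
\]
where $r := d_1(o,z)\geq 3$: the first term is the length of the new edge attached at $z$, while $S_z$ collects the edge-length decreases for those points of $\eta$ whose radial nearest neighbour switches to $z$ after its insertion. My aim is to exhibit a local event of probability bounded below, on which $\ell(z,\eta+\delta_z)^\alpha$ is at least $\rho_0^\alpha\gamma^{-\alpha/d}$ and $S_z$ is at most half of this.

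Set $t := \gamma^{-1/d}$ and fix a constant $\rho_0\in(0,1)$ to be chosen later. Consider the emptiness event $E_1 := \{\eta\cap B^d(z,\rho_0 t)=\emptyset\}$. For $\gamma\geq c_0$ one has $\rho_0 t\leq \rho_0 c_0^{-1/d} < 3 \leq r$ when $\rho_0$ is small enough, so on $E_1$ every element of $(\eta\cup\{o\})\cap B^d(o,r)$ lies at distance at least $\rho_0 t$ from $z$, which yields $\ell(z,\eta+\delta_z)^\alpha \geq \rho_0^\alpha\gamma^{-\alpha/d}$. Moreover, $\gamma V_1(\rho_0 t) = \gamma V_1(\rho_0 \gamma^{-1/d})$ is uniformly bounded on $\gamma\in[c_0,\infty)$ (it tends to $\kappa_d \rho_0^d$ as $\gamma\to\infty$), so $\PP(E_1)$ is bounded below by a positive constant depending only on $\rho_0$, $d$ and $c_0$. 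To control $S_z$, the Mecke formula \eqref{eq:Mecke} together with Lemma \ref{lem:expectation_ell} (b) applied with threshold $d_1(x,z)$ yields
\[
\E[S_z] \leq c(\alpha)\gamma^{-\alpha/d}\int_{\H^d}\gamma\,e^{-\gamma V_1(d_1(x,z)/2)/2}\,\mathcal{H}^d_1(\mathrm{d}x)\leq C_2\,\gamma^{-\alpha/d},
\]
where the integral is bounded via Lemma \ref{lem:help_integral} (applied centred at $z$ rather than $o$ by isometry invariance).

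The main difficulty is that the direct Markov combination
\[
\PP\bigl(E_1\cap\{S_z\leq \tfrac12 \rho_0^\alpha\gamma^{-\alpha/d}\}\bigr)\geq \PP(E_1)-\tfrac{2C_2}{\rho_0^\alpha}
\]
involves two terms of comparable magnitude, both $\rho_0$-dependent constants, and need not be positive for any choice of $\rho_0$. I would overcome this by strengthening $E_1$ with an additional emptiness condition on the ``near forward region''
\[
A := \{x\in\H^d : d_1(o,x)>r,\ \rho_0 t \leq d_1(x,z)\leq Mt\}
\]
for a constant $M\gg \rho_0$. On $E := E_1\cap\{\eta\cap A=\emptyset\}$, only points $x\in\eta$ at distance $\geq Mt$ from $z$ can adopt $z$; the Mecke bound restricted to these far adopters, combined with Lemma \ref{lem:help_integral} applied on $\H^d\setminus B^d(z,Mt)$, shows that their expected contribution to $S_z$ decays exponentially in $M^d$ and can be made arbitrarily small. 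At the same time $\PP(E)\geq \exp(-\gamma|B^d(z,\rho_0 t)\cup A|)$ remains bounded below uniformly in $\gamma\geq c_0$ for fixed $\rho_0,M$. The critical remaining task — the main obstacle — is to balance the exponents so that the decay of the far-adopter tail dominates the size of the excluded region $A$; this requires careful tracking of the constants in Lemma \ref{lem:help_integral} and, where necessary, a sharper volume estimate for the lens $B^d(x,d_1(x,z))\cap B^d(o,d_1(o,x))$ when $x$ is far from $o$ (the lens is then nearly a full hemisphere of $B^d(x,d_1(x,z))$, of volume comparable to $V_1(d_1(x,z))/2$ rather than $V_1(d_1(x,z)/2)$). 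A final Markov estimate on the conditional value of $S_z$ then produces the claimed lower bound; the hypothesis $r\geq 3$ ensures that the scaling-size regions $B^d(z,\rho_0 t)$ and $A$ are well separated from the origin for every $\gamma\geq c_0$.
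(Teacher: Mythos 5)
Your proposal takes a genuinely different route from the paper's, and it has a real gap that you yourself flag but do not resolve.

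The paper's proof does something almost opposite to yours. Rather than \emph{emptying} a neighbourhood of $z$ and then trying to show that far-away adopters contribute little, the paper \emph{fills} the annulus $A_\gamma=\{c_\gamma^{1/\alpha}\le d_1(x,z)\le 2c_\gamma^{1/\alpha}\}$ densely with points of $\eta$ (while keeping the inner ball $B^d(z,c_\gamma^{1/\alpha})$ empty). The geometric Lemma~\ref{lemma:lower_var_bound_epsilon_balls} then guarantees that every candidate adopter $x$ of $z$ already has a closer radial neighbour among the annulus points, so \emph{no} point of $\eta$ switches to $z$ when $z$ is inserted. Consequently $S_z=0$ exactly on the constructed event, and $D_z\mathcal{L}^{(\alpha)}_{R,\gamma,1}=\ell(z,\eta+\delta_z)^\alpha\ge c_\gamma$ with no competing term to control. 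There is no Markov inequality, no balancing of constants, and the probability of the event is then bounded below by a packing argument. This is why the paper needs the separate geometric lemma: it is precisely what converts ``dense annulus'' into ``no adoption''.

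Your route has two concrete problems. First, the bound $\E[S_z]\le C_2\gamma^{-\alpha/d}$ is for the \emph{unconditional} expectation, but you need to control $\E[S_z\,\mathbbm{1}_E]/\PP(E)$. Conditioning on the emptiness of $B^d(z,\rho_0 t)\cup A$ removes potential blocking points from the lenses $B^d(o,d_1(o,x))\cap B^d(x,d_1(x,z))$ of far-away $x$, so the conditional adoption probability for points just beyond radius $Mt$ is \emph{larger} than the unconditional one; the naive bound therefore points in the wrong direction. Second, even granting the sharper ``hemisphere'' volume estimate for the lens, the exponents compete against you: $-\log\PP(E)$ scales like $\gamma\mathcal{H}_1^d(A)\approx \kappa_d M^d/2$ (a half-ball of radius $Mt$), while the far-adopter tail decays at best like $e^{-\gamma V_1(Mt)/2}\approx e^{-\kappa_d M^d/2}$. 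These are the same order in $M^d$, so increasing $M$ does not make the ratio $\E[S_z\mathbbm{1}_E]/\PP(E)$ go to zero; without the sharper estimate (the version you actually have, $V_1(d_1(x,z)/2)$, gives exponent $\kappa_d M^d/2^{d+1}$) the tail decays strictly slower than $\PP(E)$ and the argument definitely fails. You correctly identify this as ``the critical remaining task'', but it is not a matter of bookkeeping — the strategy of pushing adopters far away while emptying a region of comparable measure appears structurally unable to win the race. The paper's density trick sidesteps this entirely by arranging $S_z=0$ rather than $S_z$ small.
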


Assuming these results, we proceed with the proofs of the main results of this section. We first prove the boundedness statement in part (b) of Theorem \ref{thm:ExpecationVariance} by deriving the following bounds on the variance.

\begin{prop}\label{prop:var_bound}
There exist constants $c_\ell,c_u>0$ and $r_0>0$ depending on $d$, $\alpha$ and $c_0$ such that for all $\gamma\geq c_0$ and $R\geq r_0$,
\begin{align*}
c_\ell \gamma^{1-2\alpha/d}{V_1}(R) \leq \Var[\mathcal{L}_{R,\gamma,1}^{(\alpha)}] \leq c_u \gamma^{1-2\alpha/d}{V_1}(R).
\end{align*}
\end{prop}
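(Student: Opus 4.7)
The plan is to obtain the upper bound directly from the Poincar\'e inequality \eqref{eq:Poincare}, and the lower bound from the reverse Poincar\'e inequality \eqref{eq:reversePoincare}, using Lemmas \ref{lem:bounded_moments}--\ref{lem:D_lower_bound} as the main analytic input. Throughout we set $F=\mathcal{L}_{R,\gamma,1}^{(\alpha)}$ and $\mu=\gamma\,\mathcal{H}^d_1$ on $B^d(o,R)$.

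\textbf{Upper bound.} Apply \eqref{eq:Poincare}. By Lemma \ref{lem:bounded_moments} with $p=2$ we have $\E[(D_zF)^2]\leq C_{1,2}\gamma^{-2\alpha/d}$ uniformly in $z\in B^d(o,R)$, and hence
\begin{equation*}
\Var[F]\leq \int_{B^d(o,R)}\E[(D_zF)^2]\,\mu(\dint z)\leq C_{1,2}\gamma^{-2\alpha/d}\cdot\gamma V_1(R)=C_{1,2}\,\gamma^{1-2\alpha/d}V_1(R).
\end{equation*}

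\textbf{Lower bound.} I would first verify that there is an absolute constant $a>0$ for which the second-order hypothesis of \eqref{eq:reversePoincare} holds. By Cauchy--Schwarz together with Lemma \ref{lem:bounded_moments} (with $p=4$) and Lemma \ref{lem:prop_seconddiff_neq_0},
\begin{equation*}
\E[(D^2_{z_1,z_2}F)^2]\leq \E[(D^2_{z_1,z_2}F)^4]^{1/2}\,\PP(D^2_{z_1,z_2}F\neq 0)^{1/2}\leq C\,\gamma^{-2\alpha/d}\exp\!\Big(-\tfrac{\gamma c_2}{2}V_1\!\big(\tfrac{d_1(z_1,z_2)}{4}\big)\Big).
\end{equation*}
Integrating the right-hand side in $z_2$ via Lemma \ref{lem:help_integral} (with $s=0$) yields a bound independent of $\gamma$ (using $\gamma\geq c_0$), so
\begin{equation*}
\int_{B^d(o,R)}\!\int_{B^d(o,R)}\E[(D^2_{z_1,z_2}F)^2]\,\mu(\dint z_1)\,\mu(\dint z_2)\leq C'\gamma^{-2\alpha/d}\cdot\gamma V_1(R)=C'\gamma^{1-2\alpha/d}V_1(R).
\end{equation*}
On the other hand, by Lemma \ref{lem:D_lower_bound} we have $\E[(D_zF)^2]\geq \tilde c\,\gamma^{-2\alpha/d}$ for every $z\in B^d(o,R)\setminus B^d(o,3)$, so choosing $r_0$ large enough that $V_1(R)-V_1(3)\geq \tfrac12 V_1(R)$ for $R\geq r_0$ gives
\begin{equation*}
\int_{B^d(o,R)}\E[(D_zF)^2]\,\mu(\dint z)\geq \tfrac{\tilde c}{2}\,\gamma^{1-2\alpha/d}V_1(R).
\end{equation*}
Dividing these two estimates provides a value of $a$ depending only on $d$, $\alpha$ and $c_0$, and applying \eqref{eq:reversePoincare} yields
\begin{equation*}
\Var[F]\geq \frac{4}{(a+2)^2}\cdot\tfrac{\tilde c}{2}\,\gamma^{1-2\alpha/d}V_1(R)=c_\ell\,\gamma^{1-2\alpha/d}V_1(R),
\end{equation*}
as claimed.

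\textbf{Main obstacle.} The delicate point is the lower bound: one has to ensure that all hidden constants (in particular the constant $a$ needed for the reverse Poincar\'e inequality) remain uniform in $\gamma$ and $R$ under the sole assumption $\gamma\geq c_0$. This is what forces the $\gamma$-uniform versions of Lemmas \ref{lem:bounded_moments}--\ref{lem:D_lower_bound} and motivates the restriction $R\geq r_0$, which is used to absorb boundary effects near the origin in Lemma \ref{lem:D_lower_bound}. The upper bound, in contrast, is almost immediate.
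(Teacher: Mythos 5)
Your proposal is correct and follows essentially the same route as the paper's proof: Poincar\'e with Lemma \ref{lem:bounded_moments} for the upper bound, and the reverse Poincar\'e inequality combined with Lemmas \ref{lem:bounded_moments}, \ref{lem:prop_seconddiff_neq_0}, \ref{lem:help_integral} (to verify the second-order hypothesis uniformly) and Lemma \ref{lem:D_lower_bound} (for the lower bound on the first-order term) for the lower bound. The paper organizes this via the auxiliary estimates \eqref{eq:D-L2-bound} and \eqref{eq:D2-L2-bound}, but the constants and the logic are the same as yours.
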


\begin{proof}
	For the upper bound we use the Poincar\'e inequality \eqref{eq:Poincare}, which gives
	\begin{equation*}
	\Var[\mathcal{L}_{R,\gamma,1}^{(\alpha)}] \leq \gamma\, \int_{B^d(o,R)} \E \left[\left(D_z \mathcal{L}_{R,\gamma,1}^{(\alpha)}\right)^2\right] \, {\cH^{d}_1} (\dint z).
	\end{equation*}
	For the lower bound we employ the reverse Poincar\'e inequality \eqref{eq:reversePoincare}. 
In our case, assuming we can find some constant {$a \geq 0$} {such that}
	\begin{align*}
	 & \gamma^2 {\int_{B^d(o,R)} \int_{B^d(o,R)}} \E\left[ (D^2_{z_1, z_2} \mathcal{L}_{R,\gamma,1}^{(\alpha)})^2\right] \, {\cH^d_1}(\dint z_1) \, {\cH^d_1}(\dint z_2) \\
	& \leq  {a}  \gamma\int_{B^d(o,R)} \E \left[\left(D_z \mathcal{L}_{R,\gamma,1}^{(\alpha)}\right)^2  \right] {\cH^d_1}(\dint z) < {\infty},
	\end{align*}
	it implies that
	\begin{align*}
	\Var[\mathcal{L}_{R,\gamma,1}^{(\alpha)}] \geq \frac{4\gamma}{({a}+2)^2}  \int_{B^d(o,R)} \E \left[\left(D_z \mathcal{L}_{R,\gamma,1}^{(\alpha)}\right)^2 \right]\, {\cH^{d}_1} (\dint z).
	\end{align*}
Combining these two results, we see that our proof is complete once we find constants {$m,M>0$} for which
\begin{align}
	m\gamma^{1-2\alpha/d} \, {V_1}(R) \leq  \gamma\int_{B^d(o,R)} \E\left[\left(D_z\mathcal{L}_{R,\gamma,1}^{(\alpha)}\right)^2 \right] {\cH^d_1}(\dint z) &\leq M\gamma^{1-2\alpha/d}\, {V_1}(R) \label{eq:D-L2-bound} \\ \intertext{and}
	\gamma^2\int_{B^d(o,R)}\int_{B^d(o,R)} \E  \left[(D^2_{z_1,z_2} \mathcal{L}_{R,\gamma,1}^{(\alpha)})^2\right] \, {\cH^d_1}(\dint z_1) \, {\cH^d_1}(\dint z_2)  & \leq M\gamma^{1-2\alpha/d} \, {V_1}(R) \label{eq:D2-L2-bound}
	\end{align}
	are satisfied for $R$ sufficiently large. We start with proving \eqref{eq:D-L2-bound}. The upper bound follows immediately from Lemma \ref{lem:bounded_moments} {because}
	\begin{align*}
\gamma	\int_{B^d(o,R)} \E\left[\left( D_z\mathcal{L}_{R,\gamma,1}^{(\alpha)}\right)^2\right]\; {\mathcal{H}^d_1}(\mathrm{d}z) \leq \gamma \int_{B^d(o,R)} \gamma^{-2\alpha/d} C_{1,2} \, {\cH^d_1}(\dint z) = C_{1,2}\gamma^{1-2\alpha/d} {V_1}(R).
	\end{align*}
	For the lower bound, we use Lemma \ref{lem:D_lower_bound} to obtain
	\begin{align*}
		\gamma\int_{B^d(o,R)}\E\left[ (D_z\mathcal{L}_{R,\gamma,1}^{(\alpha)})^2\right]\; {\mathcal{H}^d_1}(\mathrm{d}z)&\geq \gamma\int_{B^d(o,R)\backslash B^d(o,3)} \gamma^{-2\alpha/d}c^2\p(D_z\mathcal{L}_{R,\gamma,1}^{(\alpha)}\geq  \gamma^{-\alpha/d}c)\; {\mathcal{H}^d_1}(\mathrm{d}z)\\
		&\geq \gamma^{1-2\alpha/d}c^2C  \big({V_1}(R)- {V_1}(3)\big) \\
		&\geq \gamma^{1-2\alpha/d}{c^2C \over 2}  {V_1}(R)
	\end{align*}
	for sufficiently large $R > 0$. This proves \eqref{eq:D-L2-bound}.
	
	We proceed with the proof of \eqref{eq:D2-L2-bound}. The Cauchy-Schwarz inequality together with Lemma \ref{lem:bounded_moments} implies
	\begin{align*}
		\E[\lvert D_{z_1,z_2}^2\mathcal{L}_{R,\gamma,1}^{(\alpha)}\rvert^2]&\leq \E[\lvert D_{z_1,z_2}^2\mathcal{L}_{R,\gamma,1}^{(\alpha)}\rvert^4]^{1/2}\p(D_{z_1,z_2}^2\mathcal{L}_{R,\gamma,1}^{(\alpha)}\neq 0)^{1/2}\\&\leq \gamma^{-2\alpha/d}C_{2,4}^{1/2}\p(D_{z_1,z_2}^2\mathcal{L}_{R,\gamma,1}^{(\alpha)}\neq 0)^{1/2}
	\end{align*}
	and in conjunction with Lemma \ref{lem:prop_seconddiff_neq_0} we have
	\begin{align*}
		& \gamma^2 {\int_{B^d(o,R)} \int_{B^d(o,R)}} \E\left[ \left(D_{z_1,z_2}^2\mathcal{L}_{R,\gamma,1}^{(\alpha)}\right)^2 \right]\; {\mathcal{H}^d_1}(\mathrm{d}z_1)\; {\mathcal{H}^d_1}(\mathrm{d}z_2)\\&\leq \gamma^{-2\alpha/d}C_{2,4}^{1/2}\gamma^2 {\int_{B^d(o,R)} \int_{B^d(o,R)}} \p(D_{z_1,z_2}^2\mathcal{L}_{R,\gamma,1}^{(\alpha)}\neq 0)^{1/2}\; {\mathcal{H}^d_1}(\mathrm{d}z_1)\; {\mathcal{H}^d_1}(\mathrm{d}z_2)\\
		&\leq \gamma^{1-2\alpha/d}{c_1^{1/2}} C_{2,4}^{1/2} {\int_{B^d(o,R)} \int_{B^d(o,R)}} \gamma e^{-\gamma c_2 {V_1(d_1}(z_1,z_2)/ 4)/2 }\; {\mathcal{H}^d_1}(\mathrm{d}z_1)\; {\mathcal{H}^d_1}(\mathrm{d}z_2)\\ 
		& \leq \gamma^{1-2\alpha/d}{c_1^{1/2}}C_{2,4}^{1/2} \int_{B^d(o,R)}  \left[\int_{\H^d }\gamma e^{-\gamma {c_2} {V_1(d_1}(z_1,z_2)/ 4){/2} }\, {\cH^d_1}(\dint z_2) \right] {\cH^d_1}(\dint z_1)\\
		& = \gamma^{1-2\alpha/d}{c_1^{1/2}}C_{2,4}^{1/2} \left[\int_{\H^d }\gamma e^{-\gamma {c_2} {V_1(d_1}(o,w)/ 4){/2} }\, {\cH^d_1}(\dint w) \right]  {V_1}(R) .
	\end{align*}
By Lemma \ref{lem:help_integral} the integral inside the brackets converges and is uniformly bounded for all $\gamma\geq c_0$. This completes the proof of \eqref{eq:D2-L2-bound} and with it, the proof of the proposition.
\end{proof}

Next we turn to the proof of the central limit theorem.

\begin{proof}[Proof of Theorem \ref{thm:CLT}]
We use Lemma \ref{lem:bounded_moments}, which shows that the moments of the first- and second-order difference operators of $\gamma^{\alpha/d}\mathcal{L}_{R,\gamma,1}^{(\alpha)}$ are uniformly bounded by some constant $\bar{c}\geq 1$ for all $\gamma\geq {c_0}$. Then, Theorem \ref{thm:normalapproximation_bounds_dW_dK} implies the inequality 
	\begin{align*}
			d_{{\diamondsuit}}\left( {\mathcal{L}_{R,\gamma,1}^{(\alpha)} - \E[\mathcal{L}_{R,\gamma,1}^{(\alpha)}]  \over \sqrt{\Var[\mathcal{L}_{R,\gamma,1}^{(\alpha)}]}}, N    \right)&= d_{{\diamondsuit}}\left( {\gamma^{\alpha/d}\mathcal{L}_{R,\gamma,1}^{(\alpha)} - \E[\gamma^{\alpha/d}\mathcal{L}_{R,\gamma,1}^{(\alpha)}]  \over \sqrt{\Var[\gamma^{\alpha/d}\mathcal{L}_{R,\gamma,1}^{(\alpha)}]}}, N    \right)\\&\leq \frac{\bar{c}I_1^{1/2}}{\Var[\gamma^{\alpha/d}\mathcal{L}_{R,\gamma,1}^{(\alpha)}]}+\frac{2\bar{c}I_1}{\Var[\gamma^{\alpha/d}\mathcal{L}_{R,\gamma,1}^{(\alpha)}]^{3/2}}+\frac{\bar{c}I_1^{5/4}+2\bar{c}I_1^{3/2}}{\Var[\gamma^{\alpha/d}\mathcal{L}_{R,\gamma,1}^{(\alpha)}]^2}\\
			& \quad	+\frac{5\bar{c}}{\Var[\gamma^{\alpha/d}\mathcal{L}_{R,\gamma,1}^{(\alpha)}]}I_2+\frac{\sqrt{6}\bar{c}+\sqrt{3}\bar{c}}{\Var[\gamma^{\alpha/d}\mathcal{L}_{R,\gamma,1}^{(\alpha)}]}I_3
	\end{align*}
{for $\diamondsuit=W$ and $\diamondsuit=K$,}	where $N$ is a standard Gaussian random variable and the terms $I_1$, $I_2$ and $I_3$ are defined by
	\begin{align*}
	I_1 & := \gamma \int_{B^d(o,R)} \PP(D_z\mathcal{L}_{R,\gamma,1}^{(\alpha)} \neq 0)^{1/10} \, {\cH^d_1}(\dint z), \\
	I_2 & := \Bigg(\gamma\int_{B^d(o,R)}\Bigg(\gamma\int_{B^d(o,R)}\p(D^2_{x_1,x_2}\mathcal{L}_{R,\gamma,1}^{(\alpha)}\neq 0)^{1/20}\; {\mathcal{H}^d_1}(\mathrm{d}x_2)\Bigg)^2\; {\mathcal{H}^d_1}(\mathrm{d}x_1)\Bigg)^{1/2}
	\intertext{and}
	I_3 &:= \Bigg(\gamma^2 {\int_{B^d(o,R)}} \int_{B^d(o,R)}\p(D_{x_1,x_2}^2\mathcal{L}_{R,\gamma,1}^{(\alpha)}\neq 0)^{1/10}\; {\mathcal{H}^d_1}(\mathrm{d}x_1)\; {\mathcal{H}^d_1}(\mathrm{d}x_2)\Bigg)^{1/2}.
	\end{align*}
	For $I_1$ we use the trivial bound $\PP(D_z\mathcal{L}_{R,\gamma,1}^{(\alpha)} \neq 0) \leq 1$ which gives $I_1 \leq \gamma {V_1}(R)$. To bound $I_2$ we note that Lemma \ref{lem:prop_seconddiff_neq_0} and Lemma \ref{lem:help_integral} yield
 	\begin{align*}
 	\gamma \int_{B^d(o,R)}\p(D_{x_1,x_2}^2\mathcal{L}_{R,\gamma,1}^{(\alpha)}\neq 0)^{1/20}\,\cH_1^d(\mathrm{d}x_1)\leq c_1^{1/20} \int_{\mathbb{H}^d}\gamma e^{-\gamma c_2 {V_1(d_1}(x_1,o)/{4}){/20} }\,\cH_1^d(\mathrm{d}x_1)\leq C_2
 	\end{align*}
 	for all $\gamma\geq c_0$ with some suitable constant $C_2>0$ and therefore
	\[
	I_2 \leq C_2\gamma^{1/2}  {V_1}(R)^{1/2}.
	\]
	Similarly, we have $I_3\leq C_3\gamma^{1/2}  {V_1}(R)^{1/2}$ for a suitable constant $C_3>0$. Together with the lower variance bound provided by Proposition \ref{prop:var_bound} we obtain for $R\geq r_0$,
	\begin{align*}
	d_\diamondsuit\left( {\mathcal{L}_{R,\gamma,1}^{(\alpha)} - \E[\mathcal{L}_{R,\gamma,1}^{(\alpha)}]  \over \sqrt{\Var[\mathcal{L}_{R,\gamma,1}^{(\alpha)}]}}, N    \right)  &\leq \frac{\bar{c}\gamma^{1/2} {V_1}(R)^{1/2}}{c_\ell\gamma {V_1}(R)}+\frac{2\bar{c}\gamma {V_1}(R)}{{c_\ell^{3/2}}\gamma^{3/2} {V_1}(R)^{3/2}} \\
	& \quad +\frac{\bar{c}\gamma^{5/4} {V_1}(R)^{5/4}+2\bar{c}\gamma^{3/2} {V_1}(R)^{3/2}}{{c_\ell^{2}}\gamma^2 {V_1}(R)^2}\\
	&	\quad +\frac{5\bar{c}}{{c_\ell}\gamma {V_1}(R)}C_2\gamma^{1/2}  {V_1}(R)^{1/2}+\frac{\sqrt{6}\bar{c}+\sqrt{3}\bar{c}}{{c_\ell}\gamma {V_1}(R)}C_3\gamma^{1/2} {V_1}(R)^{1/2}\\
	&\leq \frac{C}{\sqrt{\gamma {V_1}(R)}}
	\end{align*}
	for some suitable constant $C>0$. This completes the proof of the theorem. 
\end{proof}

To end this section, we return to provide the proofs of our technical lemmas.
\begin{proof}[Proof of Lemma \ref{lem:help_integral}]
	Let  $c\geq c_2\sinh^{-1}(1)$ be such that for $\gamma\geq {c_0}$,
	\begin{align*}
		\gamma c_1 {V_1}\Big(\frac{r}{c_2}\Big)-(d-1)r\geq C\gamma {V_1}\Big(\frac{r}{c_2}\Big)
	\end{align*}
	for all $r\geq c$ and some constant $C>0$. Then, for $s\geq c$, {together with the polar integration formula \eqref{eq:polar_integration} we have}
	\begin{align}
		\int_{\H^d\backslash B^d(o,s)}\gamma e^{-\gamma c_1 {V_1}(\frac{{d_1}(x,o)}{c_2})}\; {\mathcal{H}^d_1}(\mathrm{d}x)&=	\omega_{d}\int_{s}^\infty\gamma e^{-\gamma c_1 {V_1}(\frac{r}{c_2})}\sinh^{d-1}(r)\;\mathrm{d}r\nonumber\\
		&\leq \omega_{d}\int_{s}^\infty\gamma e^{-\gamma c_1 {V_1}(\frac{r}{c_2})+(d-1)r}\sinh^{d-1}\Big(\frac{r}{c_2}\Big)\;\mathrm{d}r\nonumber\\
		&\leq \omega_{d}\int_{s}^\infty\gamma e^{-\gamma C {V_1}(\frac{r}{c_2})}\sinh^{d-1}\Big(\frac{r}{c_2}\Big)\;\mathrm{d}r\nonumber\\
		&=\frac{c_2}{C}e^{-\gamma C {V_1}(\frac{s}{c_2})}.\label{eq:first_case}
	\end{align}
For $s\in [0,c]$ we divide the integral into two parts. For the first part we use that there exists a constant ${\overline{c}}>0$ such that $\sinh(x)\leq {\overline{c}}\sinh(\frac{x}{c_2})$ for all $x\leq c$. Then 
\begin{align*}
	\omega_{d}\int_{s}^c\gamma e^{-\gamma c_1 {V_1}(\frac{r}{c_2})}\sinh^{d-1}(r)\;\mathrm{d}r&\leq \omega_{d} {\overline{c}}^{d-1}\int_{s}^c\gamma e^{-\gamma c_1 {V_1}(\frac{r}{c_2})}\sinh^{d-1}\Big(\frac{r}{c_2}\Big)\;\mathrm{d}r\\
	&\leq \frac{c_2}{ c_1}\omega_{d}{\overline{c}}^{d-1}\int_{s}^\infty\frac{ c_1}{c_2}\gamma e^{- c_1\gamma {V_1}(\frac{r}{c_2})}\sinh^{d-1}\Big(\frac{r}{c_2}\Big)\;\mathrm{d}r\\
	&= \frac{c_2}{ c_1}{\overline{c}}^{d-1}  e^{-\gamma  c_1 {V_1}(\frac{s}{c_2})}.
\end{align*}
Similarly to \eqref{eq:first_case} we derive for the second part of the integral
\begin{align*}
	\omega_{d}\int_{c}^\infty\gamma e^{-\gamma c_1 {V_1}(\frac{r}{c_2})}\sinh^{d-1}(r)\;\mathrm{d}r\leq \frac{c_2}{C}e^{-\gamma C {V_1}(\frac{c}{c_2})}\leq \frac{c_2}{C}e^{-\gamma C {V_1}(\frac{s}{c_2})}.
\end{align*}
Altogether, this shows Lemma \ref{lem:help_integral}.
\end{proof}

\begin{proof}[Proof of Lemma \ref{lem:bounded_moments}]
	For a locally finite counting measure $\xi\in\mathcal{N}_{\rm lfc}$ and $z\in B^d(o,R)$  it holds that
	\begin{align}
		\label{eq:dz}
		\lvert D_z\mathcal{L}_{R,\gamma,1}^{(\alpha)}(\xi)\rvert\leq \ell(z,\xi+\delta_z)^{\alpha}+\sum_{x\in\xi}\mathbbm{1}\{\ell(x,\xi)\geq {d_1}(z,x)\}  \ell(x,\xi)^{\alpha}.
	\end{align}
	Jensen's inequality provides $(a+b)^p=2^p(\frac{a}{2}+\frac{b}{2})^p\leq 2^{p-1}(a^p+b^p)$ for $a,b\geq 0$. Hence, together with the Cauchy-Schwarz inequality we have
	\begin{align}
		&\gamma^{\alpha p/d}\E[\lvert D_z\mathcal{L}_{R,\gamma,1}^{(\alpha)}\rvert^p]\nonumber\\&\leq 2^{p-1}\gamma^{\alpha p/d}\E[\ell(z,{\eta}+\delta_{z})^{\alpha p}]+ 2^{p-1}\gamma^{\alpha p/d}\E\Big[\Big(\sum_{x\in\eta_R}\mathbbm{1}\{\ell(x,{\eta})\geq {d_1}(z,x)\}\ell(x,{\eta})^\alpha\Big)^{ p}\Big]\nonumber\\
		&\leq2^{p-1}\gamma^{\alpha p/d}\E[\ell(z,{\eta}+\delta_{z})^{\alpha p}]\nonumber\\
		& \quad +2^{p-1}\gamma^{\alpha p/d}\E\Big[\Big(\sum_{x\in\eta_R}\mathbbm{1}\{\ell(x,{\eta})\geq {d_1}(z,x)\}\Big)^{2p}\Big]^{1/2}\E\Big[\max_{\substack{x\in\eta_R:\\ \ell(x,{\eta})\geq {d_1}(z,x)}}\hspace*{-4mm}\ell(x,{\eta})^{2\alpha p}\Big]^{1/2}\nonumber\\
		&=:2^{p-1}\gamma^{\alpha p/d}\E[\ell(z,{\eta}+\delta_{z})^{\alpha p}]+2^{p-1}\gamma^{\alpha p/d}M_1(z)^{1/2}M_2(z)^{1/2}.\label{eq:bounded_moments}
	\end{align}
	{By Lemma \ref{lem:expectation_ell} (b) the first summand is uniformly bounded in $z$ and $\gamma\geq c_0$.} 

	For $z\in B^d(o,R)$ the factor $M_1(z)$ can be written as a linear combination of terms of the form
	\begin{align*}
		\E\Big[\sum_{(x_1,\dots,x_k)\in \eta_{R, \neq}^k}\mathbbm{1}\{\ell(x_i,{\eta})\geq {d_1}(x_i,z), i=1,\dots,k\}\Big]
	\end{align*}
	for $k\in\{1,\dots,2p\}$. Using the monotonicity relation $\ell(x_i,{\eta}+\sum_{j=1}^k \delta_{x_j})\leq \ell(x_i,{\eta}+\delta_{x_i})$ for $i\in\{1,\dots,k\}$, the multivariate Mecke equation \eqref{eq:Meckemulti} and Hölder's inequality, we have
	\begin{align*}
		&\E\Big[\sum_{(x_1,\dots,x_k)\in\eta_{R,{\neq}}^k}\mathbbm{1}\{\ell(x_i,{\eta})\geq {d_1}(x_i,z), i=1,\dots,k\}\Big]\\
		&=\gamma^k\int_{B^d(o,R)^k}\p\Big(\ell\Big(x_i,\eta_R+\sum_{j=1}^k\delta_{x_i}\Big)\geq {d_1}(x_i,z), i=1,\dots,k\Big)\;{(\mathcal{H}^d_1)^k}(\mathrm{d}(x_1,\dots,x_k))\\
		&\leq \gamma^k\int_{B^d(o,R)^k}\prod_{i=1}^k\p(\ell(x_i,{\eta}+\delta_{x_i})\geq {d_1}(x_i,z))^{1/k}\;{(\mathcal{H}^d_1)^k}(\mathrm{d}(x_1,\dots,x_k))\\
		&\leq \gamma^k\Big(\int_{B^d(o,R)}e^{-\gamma{\mathcal{H}^d_1}(o,\frac{{d_1}(x,z)}{2})/k}\; {\mathcal{H}^d_1}(\mathrm{d}x)\Big)^k\leq \Big(\int_{\mathbb{H}^d}\gamma e^{-\gamma {V_1}(\frac{{d_1}(x,o)}{2})/k}\; {\mathcal{H}^d_1}(\mathrm{d}x)\Big)^k\leq \tilde{c}_1^k
	\end{align*}
	by Lemma \ref{lem:help_integral}. This shows that $M_1$ is uniformly bounded in $z$ and $\gamma\geq c_0$.
	
For the factor $M_2$ we have with Fubini's theorem and the Mecke equation \eqref{eq:Mecke},
\begin{align*}
& \gamma^{2\alpha p/d} \E\bigg[ \max_{\substack{x\in\eta_R:\\ \ell(x,{\eta})\geq {d_1}(z,x)}}\hspace*{-4mm}\ell(x,{\eta})^{2\alpha p} \bigg]  = \gamma^{2\alpha p/d} \int_0^\infty \p\Big(\max_{\substack{x\in\eta_R:\\ \ell(x,{\eta})\geq {d_1}(z,x)}}\hspace*{-4mm}\ell(x,{\eta})^{2\alpha p}\geq u\Big) \;\mathrm{d}u \\
& = \gamma^{2\alpha p/d} \int_0^\infty \p(\exists x\in\eta_R:\ell(x,{\eta})\geq \max\{u^{1/(2\alpha p)}, {d_1}(z,x)\}) \;\mathrm{d}u \\
& \leq \gamma^{2\alpha p/d} \int_0^\infty \E\Big[\sum_{x\in\eta_R}\mathbbm{1}\{\ell(x,{\eta})\geq \max\{u^{1/(2\alpha p)}, {d_1}(z,x)\}\}\Big] \;\mathrm{d}u \\
& = \gamma^{2\alpha p/d} \E\Big[\sum_{x\in\eta_R} \int_0^\infty \mathbbm{1}\{\ell(x,{\eta})\geq \max\{u^{1/(2\alpha p)}, {d_1}(z,x)\}\} \;\mathrm{d}u\Big] \\
& = \gamma^{2\alpha p/d} \E\Big[\sum_{x\in\eta_R} \ell(x,{\eta})^{2\alpha p} \mathbbm{1}\{\ell(x,{\eta})\geq {d_1}(z,x)\} \Big] \\
& = \int_{B^d(o,R)}\gamma^{2\alpha p/d+1} \E[\ell(x,{\eta}+\delta_x)^{2\alpha p} \mathbbm{1}\{\ell(x,{\eta}+\delta_x)\geq {d_1}(z,x)\}] \; {\mathcal{H}^d_1}(\mathrm{d}x).
\end{align*}
Now it follows from Lemma \ref{lem:expectation_ell} (b) and Lemma \ref{lem:help_integral} that
\begin{align*}
\gamma^{2\alpha p/d} \E\bigg[ \max_{\substack{x\in\eta_R:\\ \ell(x,{\eta})\geq {d_1}(z,x)}}\hspace*{-4mm}\ell(x,{\eta})^{2\alpha p} \bigg] & \leq c(\alpha) \int_{B^d(o,R)} \gamma e^{-\gamma {V_1(d_1}(z,x)/2)/2} \; {\mathcal{H}^d_1} (\mathrm{d}x) \\
& \leq c(\alpha) \int_{\H^d} \gamma e^{-\gamma {V_1(d_1}(o,x)/2)/2} \; {\mathcal{H}^d_1}(\mathrm{d}x) \leq c(\alpha) \tilde{c}_1,
\end{align*}
where	$\tilde{c}_1$ does not depend on $\alpha$, $\gamma$ and $z$. Consequently, $\gamma^{2\alpha p/d}M_2(z)$ is uniformly bounded in $\gamma\geq {c_0}$ and $z\in\H^d$. Finally, the uniform bounds for $M_1$ and $\gamma^{2\alpha p/d}M_2$ provide the existence of a constant $C_{1,p}>0$ for which $\gamma^{\alpha p/d}\E[\lvert D_z\mathcal{L}_{R,\gamma,1}^{(\alpha)}\rvert^p]\leq C_{1,p}$ for all $z\in\H^d$ and $\gamma\geq {c_0}$.
	
	For the second-order difference operator, using Jensen's inequality as above, we have
	\begin{align}
		\label{eq:bounded moments_second_order}
		&\gamma^{\alpha p/d}\E[\lvert D_{z_1,z_2}^2\mathcal{L}_{R,\gamma,1}^{(\alpha)}\rvert^p]\nonumber\\&\leq 2^{p-1}\gamma^{\alpha p/d}\E[\lvert D_{z_1}\mathcal{L}_{R,\gamma,1}^{(\alpha)}(\eta_R)\rvert^p]+2^{p-1}\gamma^{\alpha p/d}\E[\lvert D_{z_1}\mathcal{L}_{R,\gamma,1}^{(\alpha)}(\eta_R+\delta_{z_2})\rvert^p]
	\end{align}
	for $z_1,z_2\in B^d(o,R)$.
	The first summand is uniformly bounded since we have already shown that the left-most term in \eqref{eq:bounded_moments} is uniformly bounded. For the second expression we can use similar arguments as above. Analogously to \eqref{eq:dz} we have
	\begin{align*}
		\lvert D_{z_1}\mathcal{L}_{R,\gamma,1}^{(\alpha)}(\eta_R+\delta_{z_2})\rvert&\leq \ell(z_1,{\eta}+\delta_{z_2}+\delta_{z_1})^\alpha\\&\quad+\sum_{x\in\eta_R+\delta_{z_2}}\mathbbm{1}\{\ell(x,{\eta}+\delta_{z_2})\geq {d_1}(z_1,x)\}\max_{\substack{x\in\eta_R+\delta_{z_2}:\\ \ell(x,{\eta}+\delta_{z_2})\geq {d_1}(z_1,x)}}\hspace*{-4mm}\ell(x,{\eta}+\delta_{z_2})^\alpha.
	\end{align*}
	Due to monotonicity one has
	\begin{align*}
		\ell(z_1,{\eta}+\delta_{z_2}+\delta_{z_1})^\alpha&\leq \ell(z_1,{\eta}+\delta_{z_1})^\alpha,\\
		\sum_{x\in\eta_R+\delta_{z_2}}\mathbbm{1}\{\ell(x,{\eta}+\delta_{z_2})\geq {d_1}(z_1,x)\}&\leq 1+\sum_{x\in\eta_R}\mathbbm{1}\{\ell(x,{\eta})\geq {d_1}(z_1,x)\}
	\end{align*}
	and
	\begin{align*}
		\max_{\substack{x\in\eta_R+\delta_{z_2}:\\ \ell(x,{\eta}+\delta_{z_2})\geq {d_1}(z_1,x)}}\hspace*{-4mm}\ell(x,{\eta}+\delta_{z_2})^\alpha\leq \ell(z_2,{\eta}+\delta_{z_2})^\alpha+\max_{\substack{x\in\eta_R:\\ \ell(x,{\eta})\geq {d_1}(z_1,x)}}\hspace*{-4mm}\ell(x,{\eta})^\alpha,
	\end{align*}
	which yields that the second summand in \eqref{eq:bounded moments_second_order} is also uniformly bounded for $z_1,z_2\in \mathbb{H}^d$ and $\gamma\geq {c_0}$.
\end{proof}

\begin{proof}[Proof of Lemma \ref{lem:prop_seconddiff_neq_0}]
	We have 
	\begin{align*}
	\nonumber	D_{z_1,z_2}^2\mathcal{L}_{R,\gamma,1}^{(\alpha)}=\sum_{x\in\eta_R}D_{z_1,z_2}^2\ell(x,{\eta})^\alpha+D_{z_1}\ell(z_2,{\eta}+\delta_{z_2})^\alpha+D_{z_2}\ell(z_1,{\eta}+\delta_{z_1})^\alpha.
	\end{align*}
	This second-order difference operator can only be non-zero if at least one of the three summands above is non-zero.
	Hence,
	\begin{align}
	\nonumber	\p(	D_{z_1,z_2}^2\mathcal{L}_{R,\gamma,1}^{(\alpha)}\neq 0)&\leq \p(\exists x\in\eta_R:D_{z_1,z_2}^2\ell(x,{\eta})^\alpha\neq 0)+\p(D_{z_1}\ell(z_2,{\eta}+\delta_{z_2})^\alpha\neq 0)\\
		&\qquad+\p(D_{z_2}\ell(z_1,{\eta}+\delta_{z_1})^\alpha\neq 0).\label{eq:25-07-24}
	\end{align}
	Since $D_{z_2}\ell(z_1,{\eta}+\delta_{z_1})^\alpha\neq 0$ requires $\ell(z_1,{\eta}+\delta_{z_1})\geq {d_1}(z_1,z_2)$ {and ${d_1}(z_1,z_2)\leq {d_1}(o,z_1)$,} we get
	\begin{align*}
		&\p(D_{z_2}\ell(z_1,{\eta}+\delta_{z_1})^\alpha\neq 0)\\
		&\leq \p(\ell(z_1,{\eta}+\delta_{z_1})\geq {d_1}(z_1,z_2))\,\mathbbm{1}\{{d_1}(z_1,z_2)\leq {d_1}(o,z_1)\}\\
		&=\p(\eta_R(B^d(z_1,{d_1}(z_1,z_2))\cap B^d(o,{d_1}(z_1,o)))=0)\,\mathbbm{1}\{{d_1}(z_1,z_2)\leq {d_1}(o,z_1)\}\\
		&\leq {e^{-\gamma {V_1(d_1}(z_1,z_2)/2)}},
	\end{align*}
where we used \eqref{eqn:intersection_balls} in the last step, and, analogously,
$$
\p(D_{z_1}\ell(z_2,{\eta}+\delta_{z_2})^\alpha\neq 0)\leq e^{-\gamma {V_1(d_1}(z_1,z_2)/2)}.
$$
For the first summand in \eqref{eq:25-07-24} we use the Mecke equation \eqref{eq:Mecke} and the observation that $D_{z_1,z_2}^2\ell(x,{\eta})^\alpha\neq 0$ requires $\ell(x,{\eta})\geq\max\{{d_1}(z_1,x),{d_1}(z_2,x)\}$ {and $\max\{d_1(z_1,x),d_1(z_2,x)\}\leq d_1(o,x)$} for $x\in B^d(0, R)$ and get with \eqref{eqn:intersection_balls} and Lemma \ref{lem:help_integral},
	\begin{align*}
		\p(\exists x\in\eta_R:D_{z_1,z_2}^2\ell(x,{\eta})^\alpha\neq 0)
		&\leq\E\Big[\sum_{x\in\eta_R}\mathbbm{1}\{D_{z_1,z_2}^2\ell(x,{\eta})^\alpha\neq 0\}\Big]\\
		&=\int_{B^d(o,R)}\gamma\p(D_{z_1,z_2}^2\ell(x,\eta+\delta_x)^\alpha\neq 0)\; {\mathcal{H}^d_1}(\mathrm{d}x)\\
		&\leq\int_{B^d(o,R)}\gamma {\id\{ \max\{d_1(z_1,x),d_1(z_2,x)\}\leq d_1(o,x) \}} \\
		&  \qquad \times \p(\ell(x,{\eta}+\delta_x)\geq\max\{{d_1}(z_1,x),{d_1}(z_2,x)\}) \; {\mathcal{H}^d_1}(\mathrm{d}x)\\
		&\leq \int_{B^d(o,R)}\gamma e^{-\gamma {V_1(\max\{d_1(z_1,x),d_1(z_2,x)\}/2)}}\; {\mathcal{H}^d_1}(\mathrm{d}x)\\
		&\leq \int_{\mathbb{H}^d\backslash B^d(z_1,\frac{{d_1}(z_1,z_2)}{2})}\gamma e^{-\gamma {V_1(d_1(z_1,x)/2)}}\; {\mathcal{H}_1^d}(\mathrm{d}x)\\
		& \quad +\int_{\mathbb{H}^d\backslash B^d(z_2,\frac{{d_1}(z_1,z_2)}{2})}\gamma e^{-\gamma {V_1(d_1(z_2,x)/2)}}\; {\mathcal{H}^d_1}(\mathrm{d}x)\\
		&\leq2 \tilde{c}_1e^{-\tilde{c}_2\gamma {V_1(d_1(z_1,z_2)/4)}},
	\end{align*}
 which provides the result.
\end{proof}

We prepare the proof of Lemma \ref{lem:D_lower_bound} with the following geometric result.

\begin{lemma}\label{lemma:lower_var_bound_epsilon_balls}
There exists a constant $\overline{\varepsilon}>0$ only depending on $d$ such that for all $z\in B^d(o,3)^c$ and all $x\in B^d(z,1)\backslash\{z\}$ the set 
$$
B^d(z,d_1(x,z))^c\cap B^d(x,d_1(x,z)) \cap B^d(o, d_1(x,o))
$$
contains a ball of radius $\overline{\varepsilon} d_1(x,z)$.
\end{lemma}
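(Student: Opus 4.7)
I would prove this lemma by reducing to a two-dimensional hyperbolic problem and then carrying out an explicit construction.

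\textbf{Two-dimensional reduction.} Write $r:=d_1(x,z)\in(0,1)$ and $R:=d_1(o,x)$; the triangle inequality gives $R\geq d_1(o,z)-r>2$. The three centres $o,x,z$ lie in a common totally geodesic two-plane $E\subset\mathbb{H}^d(\kk)$, isometric to $\mathbb{H}^2(\kk)$. Since the centres of the three bounding spheres of the target set
\[
S \ := \ B^d(z,r)^c\cap B^d(x,r)\cap B^d(o,R)
\]
all lie in $E$, it suffices by the triangle inequality to find $y^*\in E$ satisfying
\[
d_1(y^*,x) \leq r-\overline{\varepsilon}\, r, \qquad d_1(y^*,z) \geq r+\overline{\varepsilon}\, r, \qquad d_1(y^*,o) \leq R-\overline{\varepsilon}\, r.
\]
Indeed, if these hold then for any $y$ in the $d$-dimensional ball $B^d(y^*,\overline{\varepsilon}\, r)$ the triangle inequality gives $d_1(y,x)<r$, $d_1(y,z)>r$ and $d_1(y,o)<R$, so $B^d(y^*,\overline{\varepsilon}\, r)\subset S$. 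The problem is thus genuinely two-dimensional.

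\textbf{Identifying the worst case.} Let $\alpha\in[0,\pi]$ be the angle at $x$ between the geodesics from $x$ to $z$ and from $x$ to $o$. The most delicate case is $\alpha=0$, where $z$ lies between $o$ and $x$ on a single geodesic: here the tangent direction ``away from $z$'' at $x$ coincides with the direction ``away from $o$'', so the constraints $d_1(y^*,z)>r$ and $d_1(y^*,o)<R$ are in first-order tension, and $y^*$ must be placed essentially perpendicular to the axis through $o,z,x$. For $\alpha>0$ this tension disappears, and a perturbation argument shows that any $\overline{\varepsilon}$ established in the aligned case also works in general.

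\textbf{Explicit construction in the aligned case.} Work in the upper half-plane model of $E$, placing $o=(0,1)$, $z=(0,e^{R-r})$, $x=(0,e^R)$. I take
\[
y^* \ = \ \bigl(\xi\, e^R,\ \mu\, e^R\bigr)
\]
with $\xi$ of order $r$ and $\mu$ close to $1/2$, chosen so that $y^*$ sits near the Euclidean centre of the Euclidean representation of $B^d(o,R)$ and displaced sideways from $x$; the precise values are fine-tuned so that $y^*$ lies equidistant (in the appropriate scale) from the three bounding spheres. Using the upper half-plane formula
\[
\cosh d_1\bigl((a,b),(a',b')\bigr) \ = \ 1 + \tfrac{(a-a')^2+(b-b')^2}{2bb'},
\]
the three distances $d_1(y^*,x)$, $d_1(y^*,z)$, $d_1(y^*,o)$ admit closed-form expressions in $\xi,\mu,r,R$. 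Direct expansion of these, combined with $\cosh R\sim\tfrac12 e^R$, shows that each of the three required margins is bounded below by $cr$ for a universal $c>0$, uniformly in $r\in(0,1]$ and $R\geq 2$, producing the required $\overline{\varepsilon}$.

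\textbf{Main obstacle.} The delicate step is verifying a \emph{linear-in-$r$} margin in the aligned case. A priori the allowed region for $y^*$ is a thin crescent between two spheres sharing their tangent hyperplane at $x$, which a naive second-order Taylor estimate would suggest only accommodates a ball of radius $O(r^2)$. The key ingredient rescuing a linear margin is the quantitative curvature gap
\[
\coth(r)-\coth(R) \ \geq \ \coth(1)-\coth(2) \ > \ 0
\]
that holds uniformly for $r\in(0,1]$ and $R\geq 2$, together with the bound $\coth(r)\geq 1/r$ handling the small-$r$ regime. Carefully tracking the lower-order terms in the expansion of the hyperbolic distance formula along both bounding spheres then yields the uniform linear lower bound on the inscribed radius.
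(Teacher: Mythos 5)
Your overall strategy is a genuinely different route from the paper. The paper argues softly: it first reduces by monotonicity and rotation-invariance to a single base point $\hat z$ at distance exactly $3$ from $o$, then considers the continuous function $h(x) := R(x)/d_1(x,\hat z)$ (where $R(x)$ is the inradius of the target set), uses compactness to bound $h$ away from zero on $\overline{B}^d(\hat z,1)\setminus B^d(\hat z,u)$, and handles the remaining regime $x\to\hat z$ by rescaling and comparing to a fixed Euclidean configuration. Your plan instead aims for an explicit inscribed point $y^*$ and uniform estimates, with the curvature gap $\coth(r)-\coth(R)$ as the quantitative input. This could in principle yield explicit constants where the paper's proof does not, but as written it has a concrete flaw and two undischarged steps.

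The concrete flaw is in the choice of $y^*$. Writing $y^*=(\xi e^R,\mu e^R)$ with $\xi$ of order $r$ and $\mu$ near $1/2$ places $y^*$ near the Euclidean centre $(0,\cosh R)\approx(0,e^R/2)$ of the disc representing $B^d(o,R)$. Plugging into the half-plane distance formula gives $\cosh d_1(x,y^*)=1+\tfrac{\xi^2 e^{2R}+(e^R-\mu e^R)^2}{2e^R\cdot\mu e^R}=1+\tfrac{\xi^2+(1-\mu)^2}{2\mu}\approx\tfrac54+\xi^2$, so $d_1(x,y^*)\geq\operatorname{arcosh}(5/4)\approx 0.69$. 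Since $y^*$ must lie in $B^d(x,r)$ and $r<1$ can be arbitrarily small, this choice fails for all small $r$; the correct $y^*$ must have $\mu=1-O(r)$ and all three margins of size $O(r)$, not $O(1)$. Beyond this, the claim that the aligned configuration $\alpha=0$ is extremal and that ``a perturbation argument'' transfers the constant to general $\alpha$ is only asserted, as is the final expansion showing all three margins are $\geq cr$; the quantitative curvature-gap heuristic $r(\coth r-\coth R)\geq\coth 1-\coth 2>0$ is the right idea, but the Taylor expansion you invoke is a small-$v$ approximation while the admissible offset $v$ is of the same order as the circle radius $r$, so the lower-order terms you defer to ``careful tracking'' are not negligible and must actually be controlled. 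Until the parametrisation is corrected and these estimates are carried out, the argument is a plan rather than a proof.
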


\begin{proof}
For $z\in B^d(o,3)^c$ and $x\in B^d(z,1)\backslash\{z\}$ let $\tau:[0,1]\to\mathbb{H}^d$ be the geodesic segment from $o$ to $x$, i.e.\ $\tau(0)=o$ and $\tau(1)=x$. Then,
$$
B^d(\tau(u), d_1(x,\tau(u)))\subset B^d(\tau(v), d_1(x,\tau(v)))
$$
for $u>v$, i.e.\ shifting the origin in the direction of $x$ along a geodesic ray decreases the considered set. Since $d_1(x,z)\leq 1$, $d_1(o,z)\geq 3$ and $d_1(\tau(u),z)$ is continuous in $u$, there exists $t\in[0,1]$ satisfying $d_1(\tau(t),z)=3$. Hence, to show the lemma, it is enough to only consider $z\in B^d(o,3)^c$ with $d_1(z,o)=3$ and by rotational invariance, it is sufficient to show the statement for a fixed $\hat{z}$ with $d_1(\hat{z},o)=3$.

Denote by $\overline{B}^d(y,r)$ the closed ball with centre $y$ and radius $r$. For $x\in \overline{B}^d(\hat{z},1)$ let {$R(x)$} be the largest radius of an open ball contained in the set {$B^d(\hat{z},d_1(x,\hat{z}))^c\cap B^d(x,d_1(x,\hat{z})) \cap B^d(o, d_1(x,o))$}. Note that $R$ is continuous on $\overline{B}^d(\hat{z},1)\setminus\{\hat{z}\}$. Assume that the function $h: \overline{B}^d(\hat{z},1)\setminus\{\hat{z}\}\to\R, x \mapsto \frac{R(x)}{d_1(x,\hat{z})}$ is not bounded away from $0$. Then there exists a sequence {$(x_n)_{n\in\mathbb{N}}$ in $\overline{B}^d(\hat{z},1)\setminus\{\hat{z}\}$ such that
$$
\lim_{n\to\infty} \frac{R(x_n)}{d_1(x_n,\hat{z})} = 0.
$$ 
Since $\overline{B}^d(\hat{z},1)\setminus B^d(\hat{z},u)$ is compact for any $u\in(0,1)$ and $h\neq 0$ on $\overline{B}^d(\hat{z},1)\setminus\{\hat{z}\}$, $h$ is bounded away from zero on $\overline{B}^d(\hat{z},1)\setminus B^d(\hat{z},u)$. This implies $x_n\to \hat{z}$ as $n\to\infty$}. Define $r_n:={d_1(x_n,\hat{z})}$ for $n\in\mathbb{N}$. We have
\begin{align*}
& \liminf_{n\to\infty} \frac{1}{r_n^d} \mathcal{H}^d_1(B^d({\hat{z}},r_n)^c\cap B^d({x_n},r_n) \cap B^d({o}, d_1(x_n,{o}))) \\
& \geq \liminf_{n\to\infty} \frac{1}{r_n^d} \big( \mathcal{H}^d_1(B^d({x_n},r_n) \cap B^d({o}, d_1(x_n,{o}))) - \mathcal{H}^d_1(B^d(x_n,r_n)\cap B^d({\hat{z}},r_n)) \big) \\
& \geq \liminf_{n\to\infty} \frac{1}{r_n^d} \big( \mathcal{H}^d_1(B^d({x_n},r_n) \cap B^d({y_n}, d_1(x_n,{y_n}))) - \mathcal{H}^d_1(B^d(x_n,r_n)\cap B^d({\hat{z}},r_n)) \big),
\end{align*}
where {$y_n$} is the point on the geodesic between $x_n$ and ${o}$ such that $d_1(x_n,{y_n})=2r_n$. Since hyperbolic balls of small radii can be approximated by Euclidean balls in appropriate tangent spaces, we obtain
\begin{align*}
& \liminf_{n\to\infty} \frac{1}{r_n^d} \mathcal{H}^d_1(B^d({\hat{z}},r_n)^c\cap B^d({x_n},r_n) \cap B^d({o}, d_1(x_n,{o}))) \\
& \geq V_d^{\rm {E}}(B^d_{\rm {E}}(0,1)\cap B^d_{\rm {E}}(v_2,2) ) - V_d^{\rm {E}}(B^d_{\rm {E}}(0,1)\cap B^d_{\rm {E}}(v_1,1) ) =: c_1
\end{align*}
with $v_1,v_2\in\mathbb{R}^d$ such that $|v_1|_{\rm {E}}=1$ and $|v_2|_{\rm {E}}=2$ and $V_d^{\rm E}$ denoting the $d$-dimensional Euclidean Lebesgue measure. Obviously, we have $c_1>0$. Next fix $\tilde{\varepsilon}\in(0,1)$. Then, one has
\begin{align*}
& \limsup_{n\to\infty} \frac{1}{r_n^d} \mathcal{H}_1^d(\{w\in B^d({\hat{z}},r_n)^c\cap B^d(x_n,r_n)\cap B^d({o}, d_1(x_n,{o})): \\
& \hspace{2.5cm} B^d(w,\tilde{\varepsilon}r_n)\not\subseteq B^d({\hat{z}},r_n)^c\cap B^d(x_n,r_n)\cap B^d({o}, d_1(x_n,{o}))\}) \\
& \leq \limsup_{n\to\infty} \frac{1}{r_n^d} \Big( \mathcal{H}_1^d(B^d({\hat{z}},(1+\tilde{\varepsilon})r_n)\setminus B^d({\hat{z}},r_n)) + \mathcal{H}^d_1(B^d(x_n,r_n)\setminus B^d({x_n},(1-\tilde{\varepsilon})r_n)) \\
& \hspace{2.25cm} + \mathcal{H}_1^d((B^d({o},d_1(x_n,{o}))\setminus B^d({o},d_1(x_n,{o})-\tilde{\varepsilon}r_n))\cap B^d(x_n,r_n) \Big) \\
& \leq \limsup_{n\to\infty} \frac{1}{r_n^d} \Big( \mathcal{H}_1^d(B^d(o,(1+\tilde{\varepsilon})r_n)\setminus B^d(o,r_n)) + \mathcal{H}^d_1(B^d({o},r_n)\setminus B^d({o},(1-\tilde{\varepsilon})r_n)) \\
& \hspace{2.25cm} + \mathcal{H}_1^d(B^d({o},d_1(x_n,{o}))\cap B^d(x_n,r_n))- \mathcal{H}_1^d(B^d({o},d_1(x_n,{o})-\tilde{\varepsilon}r_n)\cap B^d(x_n,r_n)) \\
& = \kappa_d \Big( (1+\tilde{\varepsilon})^d-1 + 1 - (1-\tilde{\varepsilon})^d + {1\over 2} - v(\tilde{\varepsilon}) \Big) =:g(\tilde{\varepsilon}),
\end{align*}
where $v(\tilde{\varepsilon}):=\kappa_d^{-1}V_d^{\rm E}(\{(x_1,\ldots,x_d)\in B_{\rm E}^d(0,1):x_d\geq\tilde{\varepsilon}\})$ and where we used again convergence of hyperbolic balls with small radii to Euclidean ones. Combining the previous estimates, we see that
\begin{align*}
& \liminf_{n\to\infty} \frac{1}{r_n^d} \mathcal{H}_1^d(\{w\in B^d({\hat{z}},r_n)^c\cap B^d(x_n,r_n)\cap B^d({o}, d_1(x_n,{o})): \\
& \hspace{2.5cm} B^d(w,\tilde{\varepsilon}r_n)\subseteq B^d({\hat{z}},r_n)^c\cap B^d(x_n,r_n)\cap B^d({o}, d_1(x_n,{o}))\}) \\
& \geq c_1- g(\tilde{\varepsilon}).
\end{align*}
Since $g(t)\to 0$ as $t\to 0$, the right-hand side is positive for $\tilde{\varepsilon}$ sufficiently small. Thus, for $n$ large enough there exists $w\in B^d({\hat{z}},r_n)^c\cap B^d(x_n,r_n)\cap B^d({o}, d_1(x_n,{o}))$ such that $B^d(w,\tilde{\varepsilon}r_n)\subseteq B^d({\hat{z}},r_n)^c\cap B^d(x_n,r_n)\cap B^d({o}, d_1(x_n,{o}))$ and, thus, ${h(x_n)\geq \tilde{\varepsilon}}$. This is a contradiction so that we have shown that $h$ must be bounded away from zero, which proves the desired statement.
\end{proof}

\begin{proof}[Proof of Lemma \ref{lem:D_lower_bound}]
Fix $c > 0$ such that $c_\gamma:=\gamma^{-\alpha/d}c\leq 1$ for all $\gamma\geq c_0$ and let {$z\in\H^d$} be such that ${d_1(o,z) \geq 3}$. Denote by $A_\gamma := \{x \in \H^d \,:\, c_\gamma^{1/\alpha} \leq d_1(x,z) \leq 2c_\gamma^{1/\alpha}  \}$ the closed annulus with radii $c_\gamma^{1/\alpha}  < 2c_\gamma^{1/\alpha} $ around $z$.
For all $x\in A_\gamma$ with $d_1(x,o)\geq d_1(z,o)$ denote by $x^*\in A_\gamma$ the point on the geodesic from $x$ to $z$ satisfying $d_1(x^*,z)=c_\gamma^{1/\alpha}$. Then, by the convexity of the hyperbolic distance function along geodesics (see e.g.\ \cite[Theorem 1.4.2]{AVS}) one has $d_1(x^*,o)\leq d_1(x,o)$ and hence,
$$
B^d(o,{d_1}(o,x^*)) \cap B^d(x^*, {d_1}(z,x^*))\cap A_\gamma \subseteq B^d(o,{d_1}(o,x)) \cap B^d(x, {d_1}(z,x))\cap A_\gamma.
$$
The left-hand side can be rewritten as
$$
B^d(o,{d_1}(o,x^*)) \cap B^d(x^*, {d_1}(z,x^*)) \cap A_\gamma=B^d(z,d_1(x^*,z))^c\cap B^d(x,d_1(x^*,z)) \cap B^d(o, d_1(x^*,o))
$$
so that, by Lemma \ref{lemma:lower_var_bound_epsilon_balls}, $B^d(o,{d_1}(o,x^*)) \cap B^d(x^*, d_1(z,x)) \cap A_\gamma$ contains a ball of radius $\overline{\varepsilon} d_1(x^*,z) = \overline{\varepsilon} c_\gamma^{1/\alpha} = \overline{\varepsilon} c^{1/\alpha} \gamma^{-1/d}$. Hence, we can choose $\eps>0$ such that the intersection $B^d(o,{d_1}(o,x)) \cap B^d(x, {d_1}(z,x)) \cap A_\gamma$ contains a ball of radius $\eps_\gamma:=\gamma^{-1/d}\eps$ for all $\gamma\geq c_0$ and $x\in A_\gamma$. Note that $\varepsilon$ does not depend on the choice of $z\in B^d(o,3)^c$.

Now consider the event
\begin{equation*}
E_{z,\gamma} := \{ \text{$\eta \cap A_\gamma$ is an $\eps_\gamma$-dense subset of $A_\gamma$ and $\eta \cap B^d(z,c_\gamma^{1/\alpha} ) = \emptyset$} \},
\end{equation*}
where a set is called $\eps_\gamma$-dense in $A_\gamma$ if for all $y\in A_\gamma$ there exists a point of the $\eps_{\gamma}$-dense set having distance smaller than or equal to $\eps_{\gamma}$ to $y$. We claim that, conditionally on $E_{z,\gamma}$, one has $D_z \mathcal{L}_{R,\gamma,1}^{(\alpha)} \geq c_\gamma$. Let us first show that if $E_{z,\gamma}$ occurs, $z$ is not the radial nearest neighbour of any $x \in \eta _R$. Indeed, first let $x \in \eta \cap A_\gamma$ be such that $d_1(o,x)\geq d_1(o,z)$ (so that $z$ could potentially be the radial nearest neighbour of $x$). Then, by the choice of $\eps_\gamma$ and since $\eta \cap A_\gamma$ is $\eps_\gamma$-dense in $A_\gamma$, there exists $w \in \eta \cap A_\gamma$ which lies in $B^d(o,d_1(o,x)) \cap B^d(x, d_1(z,x))$, which in particular shows that $z$ is not the radial nearest neighbour of $x$. Now let $x \in \eta \setminus B^d(z,2c_\gamma^{1/\alpha} )$ with $d_1(o,x)\geq d_1(o,z)$. 
 Let $x_B$ be the intersection point of the geodesic segment $[z,x]$ with the sphere $\partial B^d(z,2c_\gamma^{1/\alpha})$, and let again $w \in \eta \cap A_\gamma$ be such that $w \in B^d(o, d_1(o,x_B)) \cap B^d(x_B, d_1(z,x_B))$. Then, using again the convexity of the hyperbolic distance function along geodesics one has
\begin{equation*}
d_1(o,w) \leq d_1(o,x_B) \leq \max \{d_1(o,x), d_1(o,z)  \} = d_1(o,x).
\end{equation*}
Additionally,  using the fact that $x_B$ lies on the geodesic segment $[z,x]$ we see that
\begin{equation*}
d_1(w,x) \leq d_1(w,x_B) + d_1(x_B,x) \leq d_1(z,x_B) + d_1(x_B,x) = d_1(z,x).
\end{equation*}
As above, these two conditions combined show that $z$ is not the radial nearest neighbour of $x$. 
This proves that, assuming $E_{z,\gamma}$, adding $z$ does not delete any edges from the radial spanning tree and hence adds exactly one edge between $z$ and its radial nearest neighbour. Since in the case of $E_{z,\gamma}$ there are no points of $\eta$ in $B^d(z,c_\gamma^{1/\alpha})$, the length of this edge is at least $c_\gamma^{1/\alpha}$. In other words, we have proven that
\begin{equation*}
\PP(D_z \mathcal{L}_{R,\gamma,1}^{(\alpha)} \geq c_\gamma) \geq \PP(E_{z,\gamma}).
\end{equation*}
To estimate the latter probability fix a maximal collection $\{B_i\}_{i\in I}$ of disjoint $\eps_\gamma/3$-balls in $A_\gamma$. Then any choice of one point from each ball is clearly $\eps_\gamma$-dense in $A_\gamma$. Let $c_u>0$ be such that $\sinh(x)\leq c_ux$ for all $x\in [0,2]$ and recall that $\sinh(x)\geq x$ for all $x\geq 0$ as well as the formula \eqref{eq:volume_ball} for the volume of hyperbolic balls. Then, the number of balls $\{B_i\}_{i\in I}$ is obviously no more than 
\begin{align*}
	k_\gamma := \frac{\cH^d_1(B^d(o,2c_\gamma^{1/\alpha}))}{\cH^d_1(B^d(o,\eps_{\gamma}/3))}\leq \frac{c_u^{d-1}2^dc_\gamma^{d/\alpha}}{(\eps_{\gamma}/3)^d}=\frac{c_u^{d-1}3^{d}2^dc^{d/\alpha}}{\eps^d}=:k,
\end{align*}
which is independent from $\gamma$. This shows that 
\begin{align*}
\PP(E_{z,\gamma}) &\geq \PP(\eta(B_i) = 1 \, \forall i\in I, \, \eta(B^d(z,c_\gamma^{1/\alpha} ))=0) \\
&\geq \left(\gamma \cH^d_1(B^d(o,\eps_\gamma/3))e^{-\gamma \cH^d_1(B^d(o,\eps_\gamma/3) ) }\right)^k \, e^{-\gamma \cH^d_1(B^d(o,c_\gamma^{1/\alpha} ))}\\
&\geq \left( \frac{\kappa_d\eps^d}{3^d} e^{- c_u^{d-1}\kappa_d\eps^d/3^d }\right)^k \, e^{- c_u^{d-1}\kappa_dc^{d/\alpha}}=: C >0
\end{align*}
and completes the proof.
\end{proof}

\subsection*{Acknowledgement}
DR and CT were supported by the German Research Foundation (DFG) via CRC/TRR 191 \textit{Symplectic Structures in Geometry, Algebra and Dynamics}. MS and CT were also supported by the DFG priority program SPP 2265 \textit{Random Geometric Systems}.

\end{document}